\newtheorem{lemma}{Lemma}
\newtheorem{theorem}{Theorem}
\newtheorem{condition}{Condition}
\newtheorem{remark}{Remark}
\begin{document}

\centerline{\sc \Large Discrete-Time Statistical Inference for Multiscale Diffusions}

\vspace{2pc}
\centerline{\sc Siragan Gailus and Konstantinos Spiliopoulos\footnote{This work has been partially supported by NSF CAREER award DMS 1550918.}}

\centerline{\sc Department of Mathematics \& Statistics, Boston University}

\centerline{\sc 111 Cummington Mall, Boston, MA 02215}

\centerline{\sc e-mail (SG): siragan@math.bu.edu (KS): kspiliop@math.bu.edu}
\vspace{2pc}

ABSTRACT. \hspace{1pc} We study statistical inference for small-noise-perturbed multiscale dynamical systems under the assumption that we observe a single time series from the slow process only. We construct estimators for both averaging and homogenization regimes, based on an appropriate misspecified model motivated by a second-order stochastic Taylor expansion of the slow process with respect to a function of the time-scale separation parameter. In the case of a fixed number of observations, we establish consistency, asymptotic normality, and asymptotic statistical efficiency of a minimum contrast estimator (MCE), the limiting variance having been identified explicitly; we furthermore establish consistency and asymptotic normality of a simplified minimum constrast estimator (SMCE), which is however not in general efficient. These results are then extended to the case of high-frequency observations under a condition restricting the rate at which the number of observations may grow vis-\`a-vis the separation of scales. Numerical simulations illustrate the theoretical results.

\section{Introduction}\label{S:introduction}
Let us consider a family of $m+(d-m)$-dimensional processes $(X^{\varepsilon},Y^{\varepsilon,})_T=\{(X^{\varepsilon}_t,Y^{\varepsilon}_t)\}_{0\leq t\leq T}$ satisfying the stochastic differential equations (SDEs)
\begin{align}
dX^{\varepsilon}_t&=\frac{\epsilon}{\delta}b_{\theta}(X^{\varepsilon}_t,Y^{\varepsilon}_t)dt
	+c_{\theta}(X^{\varepsilon}_t,Y^{\varepsilon}_t)dt
	+\sqrt\epsilon\sigma(X^{\varepsilon}_t,Y^{\varepsilon}_t)dW_t\label{model}\\
dY^{\varepsilon}_t&=\frac{\epsilon}{\delta^2}f(X^{\varepsilon}_t,Y^{\varepsilon}_t)dt
	+\frac{1}{\delta}g(X^{\varepsilon}_t,Y^{\varepsilon}_t)dt
	+\frac{\sqrt\epsilon}{\delta}\tau_1(X^{\varepsilon}_t,Y^{\varepsilon}_t)dW_t
	+\frac{\sqrt\epsilon}{\delta}\tau_2(X^{\varepsilon}_t,Y^{\varepsilon}_t)dB_t\nonumber\\
	X^{\varepsilon}_0&=x_0\in\mathcal{X}=\mathbb{R}^m,
	Y^{\varepsilon}_0=y_0\in\mathcal{Y}=\mathbb{R}^{d-m}.\nonumber
\end{align}

Here, $W$ and $B$ are independent Wiener processes and $\varepsilon=(\epsilon,\delta)\in\mathbb{R}^2_+$ satisfy $\delta=\delta(\epsilon)\to0$ as $\epsilon\to0$. Note that $\delta$ is the time-scale separation parameter while $\epsilon$ dictates the size of the noise. The vector $\theta\in\Theta\subset\mathbb{R}^{k}$ represents an unknown parameter governing the drift coefficients $b_\theta$ and $c_\theta$; the statistical problem considered herein is the estimation of $\theta$ based upon a time series sampled from a realization of the slow process $X^{\epsilon}$.

Data from physical dynamical systems commonly exhibit multiple characteristic space- or time-scales.  In addition, stochastic noise may be introduced to account for uncertainty or as an essential part of a particular modeling problem. Consequently, multiscale stochastic differential equation models like (\ref{model}) are widely employed in applied fields including physics, chemistry, and biology \cite{chauviere2010cell,janke2008rugged,zwanzig1988diffusion}, neuroscience \cite{jirsa2014nature}, meteorology \cite{majda2008applied}, and econometrics and mathematical finance \cite{jean2000derivatives,zhang2005tale} to describe stochastically-perturbed dynamical systems with two or more different space- or time-scales.

The goal of this paper is to study the problem of statistical inference for the unknown parameter vector $\theta\in\Theta\subset\mathbb{R}^{k}$ based on a fixed discrete-time sample $\{x_{t_{k}}\}_{k=1}^{n}$ from an observation of the slow component $X^{\varepsilon}$. We consider this problem in the following two regimes:
\begin{enumerate}
	\item (`$\infty$ regime,' or `homogenization regime') $\lim_{\epsilon\to0}\frac{\epsilon}{\delta}=\infty$.
	\item (`$\gamma$ regime,' or `averaging regime') $\lim_{\epsilon\to0}\frac{\epsilon}{\delta}=\gamma\in(0,\infty)$.
\end{enumerate}
In the $\infty$ regime, a standard `centering' condition (Condition \ref{centeringcondition}) is imposed to regulate the asymptotically-singular term $\frac{\epsilon}{\delta}b_\theta(X^{\varepsilon}_t,Y^{\varepsilon}_t)dt$ in the SDE for the slow component $X^{\varepsilon}$.

Statistical inference for diffusions without multiple scales (i.e., $\delta\equiv1$) has been very well studied in the literature; see for example \cite{bishwal2008parameter, KutoyantsSmallNoise, KutoyantsStatisticalInference,rao1999statistical}. In \cite{bishwal2008parameter, KutoyantsStatisticalInference,rao1999statistical}, $\epsilon\equiv\delta\equiv1$ and the asymptotic behavior of the maximum likelihood estimator (MLE) based on continuous data is studied in the time horizon limit $T\rightarrow\infty$. This is directly analogous to the limit $n\rightarrow\infty$ in the classical setting of i.i.d. observations. The case of fixed time horizon $T$ but $\epsilon\to 0$ has been studied in \cite{KutoyantsSmallNoise}, likewise based on continuous data and without multiple scales. Estimation based on discrete data without multiple scales has also been addressed in the literature; see for example \cite{guy2014parametric,SorensenUchida,Uchida}.

Maximum likelihood estimation from continuous data for multiscale models with noise of order one has been studied in \cite{azencott2010adaptive, azencott2013sub,krumscheid2013semiparametric,papavasiliou2009maximum,pavliotis2007parameter}. The authors of \cite{papavasiliou2009maximum,pavliotis2007parameter} prove that in the averaging regime, the MLE induced by the (nondeterministic) limit of the slow process $X^\varepsilon$ in (\ref{model}) with $\epsilon\equiv1$ as $\delta\to0$ is consistent under the assumption that coefficients are bounded and that the fast process $Y^\varepsilon$ takes values in a torus. It is important to point out that the regime $\varepsilon\to0$ which we study in this paper is different in that the diffusion $\sqrt\epsilon\sigma$ vanishes in the limit and, as described precisely by Theorem \ref{xlimit}, $X^\varepsilon$ converges to the solution of an ODE rather than an SDE; the (deterministic) limit does not induce a well-defined likelihood. Meanwhile, it is shown also in \cite{papavasiliou2009maximum} that direct application of the principle of maximum likelihood with discretely-sampled data via Euler-Maruyama approximation produces consistent estimates only if the data is first appropriately subsampled. Most closely related to the present work are \cite{gailusspiliopoulos, spiliopoulos2013maximum}, wherein the authors prove consistency and asymptotic normality of the continuous-data MLE for special cases of (\ref{model}).

Our focus in this paper is different. We address estimation from discrete data for multiscale diffusion models like (\ref{model}). We assume that we are given only a discrete-time sample $\{x_{t_{k}}\}_{k=1}^{n}$ from a single observation of the slow process $X^{\varepsilon}$; we assume that no data are available from the fast process $Y^{\varepsilon}$. Motivated by a second-order stochastic Taylor expansion with respect to $\sqrt{\epsilon}$, we construct a minimum contrast estimator (hereinafter referred to as the MCE) based on an appropriate misspecified model. Firstly, we prove that for any given fixed value of $n$, in either the averaging or the homogenization regime, this estimator is consistent and asymptotically normal as $\epsilon+\delta\rightarrow 0$, with a limiting variance $M(\theta; n)$ which we calculate explicitly. Knowing the limiting variance is important for statistical inference as it allows one to control the error, construct confidence intervals, and develop appropriate hypothesis tests. Going a step further, we show that $M(\theta; n)$ attains, in the limit as $n\to\infty$, the Cram\'er-Rao bound for the continuous-data estimation problem, which is to say that the estimator is asymptotically statistically efficient as first $\epsilon+\delta\to0$ and then $n\to\infty$. Secondly, we study a simplified minimum contrast estimator (hereinafter referred to as the SMCE) that can be considerably easier to apply and presents improved robustness in numerical studies. We show that the simplified estimator is also consistent and asymptotically normal, although it is not in general efficient. Thirdly, we study the behavior of the two estimators in the \textit{joint} limit $\epsilon+\delta \rightarrow 0$ \textit{and} $n\rightarrow\infty$ (the high-frequency regime), showing that consistency and asymptotic normality (and, for the MCE, asymptotic efficiency) are retained provided that the sampling interval $\Delta:=T/n$ does not decrease too quickly relative to $\epsilon$ (or equivalently, depending on the regime, to $\delta$). In particular, we prove in the high-frequency regime (a) consistency of both MCE and SMCE under the assumption that $\epsilon=o(\Delta)$, and (b) asymptotic normality of the MCE under the stronger assumption $\epsilon=o(\Delta^{2})$ but asymptotic normality of SMCE under the assumption $\epsilon=o(\Delta)$ only. It is clear that these conditions can be written also in terms of $\delta$ and $\Delta$.

To the best of our knowledge, this is the first paper on discrete-time estimation for multiscale models to describe estimators demonstrated to be consistent, asymptotically normal, and asymptotically statistically efficient. The limiting variance of the estimators is calculated explicitly. For high-frequency observations, we require that $\Delta$ not decrease too quickly relative to $\epsilon$; this is reminiscent of the subsampling prescribed in \cite{azencott2013sub, papavasiliou2009maximum}, although our case is different in that we take $T$ to be fixed, and so also the relationship between $\Delta:=T/n$ and $n$ is fixed. Despite our best efforts, we have not managed to relax these assumptions; nevertheless, numerical simulations suggest that the estimators remain well behaved, even when $\epsilon$ is not of lower order than $\Delta$ (see Tables \ref{Table5} and \ref{Table6} and Figures \ref{Fig9}-\ref{Fig12} in Section \ref{S:numericalsection}).  We emphasize that our estimators can be applied without precise knowledge of $\delta$ (which is tricky to estimate in practice) or $\epsilon$.

Let us now briefly discuss the approach that we will take. The main idea is to establish a second-order stochastic Taylor expansion of $X^{\varepsilon}$ of the form
\begin{align*}
X^{\varepsilon}=\bar X+\sqrt\epsilon\varphi^{\varepsilon},
\end{align*}
where $\bar X$ is the deterministic law-of-large-numbers limit of $X^{\varepsilon}$ and $\varphi^{\varepsilon}$ converges in distribution to a Gaussian process as $\epsilon\to0$. This representation motivates a misspecified model in which the conditional distribution of $X^{\varepsilon}_{t_{k}}$ given $X^{\varepsilon}_{t_{k-1}}=x_{t_{k-1}}$ is approximated by a Gaussian random variable having a specified mean and variance. The principle of maximum likelihood applied to this discrete, approximate process leads us to a certain contrast function, the minimizer of which we take to be our estimator. Naturally, appropriate identifiability conditions must be assumed; these conditions are however typically satisfied provided that the coefficients in the original model are sufficiently regular. We mention here that we have restricted the dependence of (\ref{model}) on the parameter $\theta$ to the coefficients $b$ and $c$ for purposes of presentation only; as inspection of the proofs will make apparent, the results of the paper continue to hold unchanged if one allows the other coefficients $\sigma, f, g, \tau_1, \tau_2$ also to depend on $\theta$.

The rest of this paper is organized as follows. Section \ref{S:modelsection} presents the main assumptions of the paper and introduces helpful notation. Section \ref{S:TaylorExpansion} develops a second-order stochastic Taylor expansion of $X^{\varepsilon}$; the representation thus obtained motivates the methods of statistical inference presented in later sections. Section \ref{S:InferenceFixedNumberObs} develops the proposed estimator assuming a fixed number $n$ of data points. We prove that the estimator is consistent and asymptotically normal as $\epsilon+\delta\rightarrow 0$, and asymptotically statistically efficient in the sense that the limiting variance, as a function of the number of observations $n$, attains the Cram\'er-Rao bound as $n\to\infty$. Section \ref{S:simplified} studies a simplification of the estimator of Section \ref{S:InferenceFixedNumberObs} that offers certain advantages in practice at the cost of an increase in limiting variance. Section \ref{S:large_n} studies the joint limit $\epsilon+\delta\to 0$ and $n\rightarrow\infty$, concluding that the proof of consistency goes through provided that one has $\epsilon \cdot n\to0$, which is to say that the squared diffusion vanishes more quickly than does the sampling interval $\Delta:=T/n$. Similarly,  asymptotic normality holds for the SMCE provided that $\epsilon=o(\Delta)$ and for the MCE if $\epsilon=o(\Delta^2)$. Section \ref{S:numericalsection} presents the results of numerical simulations to illustrate the theoretical results; simulations are described also that deliberately violate the relations $\epsilon=o(\Delta)$ and $\epsilon=o(\Delta^{2})$ that we impose for the high-frequency theory, in order to substantiate our conjecture that it may yet be possible to weaken these assumptions. Section \ref{S:Conclusions} summarizes our conclusions and discusses future directions of the research. Finally, an Appendix collects technical estimates used throughout the paper.

\section{Preliminaries and Assumptions}\label{S:modelsection}
Let us begin with a discussion of the main assumptions that we carry throughout the paper. We work with a canonical probability space $(\Omega, \mathcal{F}, P)$ equipped with a filtration $\{\mathcal{F}_t\}_{0\leq t\leq T}$ satisfying the usual conditions (namely, $\{\mathcal{F}_t\}_{0\leq t\leq T}$ is right continuous and $\mathcal{F}_0$ contains all $P$-negligible sets). Recall from (\ref{model}) that $\mathcal{X}=\mathbb{R}^m$ and $\mathcal{Y}=\mathbb{R}^{d-m}$ are, respectively, the state spaces of the slow and fast components of the dynamics.

To avoid ambiguity, we will always write $|\cdot|$ for the Frobenius (Euclidean) norm, and $||\cdot||$ for the operator norm of a matrix.

To guarantee that (\ref{model}) is well posed and that our limit results are valid, we impose the following regularity and growth conditions:
\begin{condition}\label{basicconditions} (Regularity of Coefficients)

\noindent Conditions on $c_\theta$
\begin{enumerate}
	\item $\exists K>0,q>0,r\in[0,1);\forall\theta\in\Theta,\left|c_\theta(x,y)\right|\leq K(1+|x|^r)(1+|y|^q)$.
	\item $\exists K>0,q>0;\forall\theta\in\Theta,\left|\nabla_x c_\theta(x,y)\right|+\left|\nabla_x\nabla_x c_\theta(x,y)\right|\leq K(1+|y|^q)$.
	\item $\forall\theta\in\Theta$, $c_\theta$ has two continuous derivatives in $x$, H\"older continuous in $y$ uniformly in $x$.
	\item $\forall\theta\in\Theta, \nabla_y\nabla_y c_\theta(x,y)$ is jointly continuous in $x$ and $y$.
	\item $c_\theta(x,y)$ has two locally bounded derivatives in $\theta$ with at most polynomial growth in $x$ and $y$.
\end{enumerate}

\noindent Conditions on $\sigma$
\begin{enumerate}
	\item $\forall N>0, \exists C(N);
		\forall x_1,x_2\in\mathcal{X},\forall y\in\mathcal{Y}$ with $|y|\leq N,
		|\sigma(x_1,y)-\sigma(x_2,y)|\leq C(N)|x_1-x_2|$.
	\item $\exists K>0, q>0; |\sigma(x,y)|\leq K(1+|x|^{1/2})(1+|y|^q)$.
	\item $\sigma\sigma^T$ is uniformly nondegenerate.
\end{enumerate}

\noindent Conditions on $f, \tau_1, \tau_2$
\begin{enumerate}
	\item $f, \tau_1\tau^T_1$, and $\tau_2\tau^T_2$ are twice differentiable in $x$ and $y$, the first and second derivatives in $x$ being bounded,\\
		and all partial derivatives up to second order being H\"older continuous in $y$ uniformly in $x$.
	\item $\tau_2\tau^T_2$ is uniformly nondegenerate.
\end{enumerate}

\noindent Conditions on $b_\theta$, $g$
\begin{enumerate}
	\item $b_\theta$ satisfies the same smoothness conditions as $c_\theta$.
	\item In the $\infty$ regime, $b_\theta$ and its derivatives are bounded uniformly in the first variable by polynomials in the second variable.
	\item In the $\infty$ regime, $g$ satisfies the same conditions as $c_\theta$; in the $\gamma$ regime, $g$ satsifies the same conditions as $f$.
\end{enumerate}
\end{condition}

In the limit of infinite scale separation, the slow process appears from the perspective of the fast to become `frozen.' To guarantee that the fast process has an invariant distribution when the slow process is `frozen,' we impose the following recurrence condition:

\begin{condition}\label{recurrencecondition} (Recurrence Condition)
\begin{enumerate}
	\item In the $\infty$ regime, $\lim_{|y|\to\infty}\sup_{x\in\mathcal{X}}\bigg(f(x,y)\cdot y\bigg)=-\infty$.
	\item In the $\gamma$ regime, $\lim_{|y|\to\infty}\sup_{x\in\mathcal{X}}\bigg((\gamma f+g)(x,y)\cdot y\bigg)=-\infty$.
\end{enumerate}
\end{condition}

The conditions on $f,\tau_{1},\tau_{2}$ in Condition \ref{basicconditions} and Condition \ref{recurrencecondition} guarantee that for each fixed  $x\in\mathcal{X}$ one has on $\mathcal{Y}$, in the $\infty$ regime, a unique invariant measure $\mu_{\infty,x}$ associated with the operator
\begin{align*}
	\mathcal{L}_{\infty,x}&:=f(x,\cdot)\cdot\nabla_y+
	\frac{1}{2}(\tau_1\tau^T_1+\tau_2\tau^T_2)(x,\cdot):\nabla^2_y,
\end{align*}
and in the $\gamma$ regime, a unique invariant measure $\mu_{\gamma,x}$ associated with the operator
\begin{align*}
	\mathcal{L}_{\gamma,x}&:=(\gamma f+g)(x,\cdot)\cdot\nabla_y+
	\frac{\gamma}{2}(\tau_1\tau^T_1+\tau_2\tau^T_2)(x,\cdot):\nabla^2_y.
\end{align*}
For existence of an invariant measure, the interested reader may see for example \cite{veretennikov1997}; uniqueness is a consequence of nondegenerate diffusion, as for example in \cite{pardoux2003poisson}.

In the $\infty$ regime, a standard `centering' condition is imposed to regulate the asymptotically-singular term $\frac{\epsilon}{\delta}b_{\theta}(X^{\varepsilon}_t,Y^{\varepsilon}_t)dt$ in order that we may obtain a homogenization limit.

\begin{condition}\label{centeringcondition} (Centering Condition for the $\infty$ Regime)
For each fixed $x\in\mathcal X$, $\int_{\mathcal Y}b(x,y)\mu_{\infty,x}(dy)=0$.
\end{condition}

We also need a condition on the relative rates at which $\epsilon$ and $\delta$ vanish in order to derive a suitable second-order approximation of the slow component $X^{\varepsilon}$.
\begin{condition}\label{ellcondition}
\begin{enumerate}
	\item In the $\infty$ regime, $\frac{\epsilon^{3/2}}\delta\to\ell_\infty$ as $\epsilon\to0$ for some $\ell_\infty\in(0,\infty]$.
	\item In the $\gamma$ regime, $\frac{\sqrt\epsilon}{\frac\epsilon\delta-\gamma}\to\ell_\gamma$ as $\epsilon\to0$ for some $\ell_\gamma\in(0,\infty]$.
\end{enumerate}
\end{condition}

Let us conclude this section with an introduction of notational conventions. Firstly, an asterisk in place of the regime signifier (writing $\mathcal{L}_{*,x}$, $\mu_{*,x}$, etc.) will be used when convenient if a statement is to be understood for both regimes. Next, we will in many instances wish to integrate functions of two variables $x$ and $y$ in the second variable over the invariant measures $\mu_{*,x}$ to obtain `averaged' functions of $x$ only. In such cases a bar will distinguish an averaged function from the original function; that is, for a generic function $h$ of $x$ and $y$,
\begin{align*}
\bar h(x):=\int_{\mathcal Y}h(x,y)\mu_{*,x}(dy).
\end{align*}
This bar notation should not be confused with $\bar X$, which is the first-order limit of the stochastic process $X^{\varepsilon}$, nor with $\bar\theta$, which is an estimator. Finally, we will denote by $A:B$ the Frobenius inner product $\Sigma_{i,j}[a_{i,j}\cdot b_{i,j}]$ of matrices $A=(a_{i,j})$ and $B=(b_{i,j})$.

\section{Asymptotic Behavior of $X^{\varepsilon}$}\label{S:TaylorExpansion}
In this section, we develop a stochastic Taylor expansion of $X^{\varepsilon}$, establishing in particular a representation
\begin{align}
X^{\varepsilon}=\bar X_*+\sqrt\epsilon\varphi_*^{\varepsilon}
\end{align}
in which $\bar X_*$ is deterministic and $\varphi_*^{\varepsilon}$ converges in distribution to a Gaussian process as $\epsilon\to0$. This representation is key in that it allows us to define estimators in terms of an appropriate misspecified model and prove consistency and asymptotic normality (see Sections \ref{S:InferenceFixedNumberObs}, \ref{S:simplified} and \ref{S:large_n}). We suppress the parameter $\theta$, as the results of this section hold independently of (and indeed uniformly over) $\theta\in\Theta$.

Intuitively speaking, as $\delta\to0$, the fast dynamics accelerate relative to the slow and $Y^{\varepsilon}_t$ tends to its invariant distribution at each `frozen' value $X^{\varepsilon}_t$. At the same time, the driving noise $\sqrt\epsilon\sigma dW$ of the slow process tends to $0$. It is therefore reasonable to anticipate that, in the limit, $X^{\varepsilon}$ should hew closely to the solution $\bar X_*$ of an ODE involving coefficients averaged over the fast dynamics.

Recall that in the $\infty$ regime, $\frac\epsilon\delta\to\infty$; to describe the asymptotic behavior of $X^{\varepsilon}$ in this regime, we need to find the limiting contribution of the asymptotically-singular term $\frac{\epsilon}{\delta}b(X^{\varepsilon}_t,Y^{\varepsilon}_t)dt$. It turns out that under Condition \ref{centeringcondition}, the limiting contribution may be captured in terms of the solution of a certain Poisson equation. By Theorem 3 in \cite{pardoux2003poisson}, there is a unique solution $\chi$ in the class of functions that grow at most polynomially in $|y|$ as $y\to\infty$ of the equation
\begin{align}
	\mathcal{L}_{\infty,x}\chi(x,y)&=-b(x,y),\quad \int_\mathcal{Y}\chi(x,y)\mu_{\infty,x}(dy)=0.\label{chifunction}
	\end{align}

Theorem \ref{xlimit} establishes that the averaged coefficient in the ODE for $\bar X_\infty$ is $\bar\lambda_\infty$, where
\begin{align}
\lambda_\infty(x,y):=(\nabla_y\chi\cdot g+c)(x,y).\label{Eq:lambda_infty_reg}
\end{align}

In the $\gamma$ regime, there is no singular coefficient with which to contend, and the corresponding averaged coefficient is $\bar\lambda_\gamma$, where
\begin{align}
\lambda_\gamma(x,y):=(\gamma b+c)(x,y).\label{Eq:lambda_gamma_reg}
\end{align}

Let us state this law-of-large-numbers approximation precisely in the form of a theorem. This gives a first-order approximation to $X^{\varepsilon}$ when $\epsilon,\delta$ are small.
\begin{theorem}\label{xlimit}
Assume Conditions \ref{basicconditions} and \ref{recurrencecondition} and, in the $\infty$ regime, Condition \ref{centeringcondition}; let $*$ denote the regime. For any initial condition $(x_0, y_0)\in\mathcal X\times\mathcal Y$ and $0<p<\infty$, there is a constant $\tilde K$ such that for $\epsilon$ sufficiently small,
\begin{align*}
E\sup_{0\leq t\leq T}|X^{\varepsilon}_t-\bar X_{*,t}|^p\leq\tilde K\cdot\epsilon^{p/2},
\end{align*}
where $\bar X_*$ is the (deterministic) solution of the integral equation
\begin{align*}
\bar X_{*,t}&:=x_0+\int^t_0\bar\lambda_*(\bar X_{*,s})ds,
\end{align*}
where $\bar\lambda_*$ is obtained, depending on the regime, by averaging (\ref{Eq:lambda_infty_reg}) or (\ref{Eq:lambda_gamma_reg}) over the invariant measures $\mu_{*,x}$.
\end{theorem}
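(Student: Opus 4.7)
The plan is to reduce, in both regimes, the slow equation to an ODE driven by $\bar\lambda_*$ plus an explicit $L^p$ remainder of order $\sqrt\epsilon$, then close the estimate by Gronwall. A preliminary step, used pervasively, is to establish uniform-in-$\varepsilon$ moment bounds $E\sup_{0\le t\le T}|Y^\varepsilon_t|^q<\infty$ for every $q\ge 1$ by a Lyapunov argument on $|y|^{2q}$; this is where Condition \ref{recurrencecondition} enters, and it is essential because $c$, $\sigma$, and $\chi$ are only polynomially bounded in $y$.

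The main technical step is removal of the asymptotically-singular drift in the $\infty$ regime. By Theorem~3 of \cite{pardoux2003poisson}, equation (\ref{chifunction}) admits a unique polynomially-growing solution $\chi$ whose $x$-derivatives share that growth. I would apply It\^o's formula to $\chi(X^\varepsilon_t,Y^\varepsilon_t)$, isolate the dominant piece $\frac{\epsilon}{\delta^2}\mathcal{L}_{\infty,x}\chi=-\frac{\epsilon}{\delta^2}b$, and multiply through by $\delta$ to obtain an identity of the form
\begin{align*}
\tfrac{\epsilon}{\delta}\int_0^t b(X^\varepsilon_s,Y^\varepsilon_s)\,ds
&=\int_0^t(\nabla_y\chi\cdot g)(X^\varepsilon_s,Y^\varepsilon_s)\,ds
+\delta\bigl[\chi(x_0,y_0)-\chi(X^\varepsilon_t,Y^\varepsilon_t)\bigr]+\delta M^{\chi,\varepsilon}_t+D^{\chi,\varepsilon}_t,
\end{align*}
where $M^{\chi,\varepsilon}$ is a martingale with $[M^{\chi,\varepsilon}]_t=O(\epsilon/\delta^2)$ (so that $\delta M^{\chi,\varepsilon}=O(\sqrt\epsilon)$ by BDG), and $D^{\chi,\varepsilon}$ collects lower-order drift of size $O(\delta)+O(\epsilon)$. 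Condition \ref{ellcondition}(1) forces $\delta=O(\epsilon^{3/2})=o(\sqrt\epsilon)$, so the combined remainder is $O(\sqrt\epsilon)$ in $L^p$. The $\gamma$ regime is simpler: write $\frac{\epsilon}{\delta}b+c=\lambda_\gamma+(\frac{\epsilon}{\delta}-\gamma)b$, and Condition \ref{ellcondition}(2) gives $(\frac{\epsilon}{\delta}-\gamma)\int_0^t b\,ds=O(\sqrt\epsilon)$ directly.

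Having thus arrived at $X^\varepsilon_t=x_0+\int_0^t\lambda_*(X^\varepsilon_s,Y^\varepsilon_s)\,ds+E^\varepsilon_t$ with $\|E^\varepsilon\|_{L^p}=O(\sqrt\epsilon)$, I would split
\begin{align*}
X^\varepsilon_t-\bar X_{*,t}
&=\int_0^t\bigl[\bar\lambda_*(X^\varepsilon_s)-\bar\lambda_*(\bar X_{*,s})\bigr]\,ds
+\int_0^t\bigl[\lambda_*(X^\varepsilon_s,Y^\varepsilon_s)-\bar\lambda_*(X^\varepsilon_s)\bigr]\,ds+E^\varepsilon_t.
\end{align*}
The first integral is Lipschitz in $X^\varepsilon-\bar X_*$ by Condition \ref{basicconditions} and feeds Gronwall. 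The averaging-fluctuation integral is treated by a \emph{second} Poisson equation: let $\Psi_*$ solve $\mathcal{L}_{*,x}\Psi_*=-(\lambda_*-\bar\lambda_*)$ (polynomial-growth solution, again by \cite{pardoux2003poisson}); applying It\^o to $\frac{\delta^2}{\epsilon}\Psi_\infty$ (respectively $\delta\Psi_\gamma$) expresses this integral as a boundary term plus drift remainder plus martingale, each of size $O(\sqrt\epsilon)$ in $L^p$ after invoking Condition \ref{ellcondition}. Combining all estimates yields
\begin{align*}
E\sup_{0\le s\le t}|X^\varepsilon_s-\bar X_{*,s}|^p\le C\int_0^t E\sup_{0\le u\le s}|X^\varepsilon_u-\bar X_{*,u}|^p\,ds+C\epsilon^{p/2},
\end{align*}
and Gronwall closes the estimate. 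The subtle point — and the one I expect to be the principal obstacle — is extracting the sharp rate $\sqrt\epsilon$, rather than mere vanishing, for the averaging-fluctuation integral: a naive ergodic argument gives only $o(1)$, and it is the interplay between the second Poisson-equation prefactor ($\delta^2/\epsilon$ or $\delta$) and the $\sqrt\epsilon/\delta$ martingale parts generated by $Y^\varepsilon$, balanced precisely by Condition \ref{ellcondition}, that does the work.
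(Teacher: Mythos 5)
Your proposal follows essentially the same route as the paper: the paper omits the proof of Theorem \ref{xlimit}, deferring to Theorem 1 of \cite{gailusspiliopoulos} together with Lemma \ref{ergodiclemma}, and the two ingredients you supply are exactly the ones underlying that argument --- the corrector $\chi$ of (\ref{chifunction}) to absorb the singular drift $\frac{\epsilon}{\delta}b$ (compare the identity (\ref{chireference}) in the sketch of Lemma \ref{moments}) and a second Poisson equation, the paper's $\Phi_*$ of (\ref{phifunction}), to extract the $O(\sqrt\epsilon+\sqrt\delta)$ rate for the averaging-fluctuation integral (this is precisely the content of Lemmata \ref{averagelemma} and \ref{ergodiclemma}), followed by Gronwall. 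One caveat: you invoke Condition \ref{ellcondition}, which is not among the hypotheses of Theorem \ref{xlimit}. In the $\infty$ regime this is superfluous --- the regime definition $\frac{\epsilon}{\delta}\to\infty$ already gives $\delta=o(\epsilon)=o(\sqrt\epsilon)$, which is all your remainder estimates require, so you should not lean on Condition \ref{ellcondition}(1) there. In the $\gamma$ regime, however, your observation is substantive: bounding $(\frac{\epsilon}{\delta}-\gamma)\int_0^t b\,ds$ by $O(\sqrt\epsilon)$ genuinely requires $\frac{\epsilon}{\delta}-\gamma=O(\sqrt\epsilon)$, i.e., Condition \ref{ellcondition}(2) or something equivalent; without it your (and the paper's) argument yields only the rate $\sqrt\epsilon+\left|\frac{\epsilon}{\delta}-\gamma\right|$. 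You should either state explicitly that you assume Condition \ref{ellcondition}(2) in the $\gamma$ regime or record the degraded rate; as written, the proof proves a statement with a hypothesis the theorem does not announce.
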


Theorem \ref{xlimit} extends Theorem 1 in \cite{gailusspiliopoulos}. The proof relies on Lemma \ref{ergodiclemma} in the Appendix; we omit the details, as given
Lemma \ref{ergodiclemma}, the argument follows nearly verbatim the proof of Theorem 1 in \cite{gailusspiliopoulos}.

The essential content of Theorem \ref{xlimit} is that $X^{\varepsilon}$ tends to a deterministic first-order limit. The minimum contrast estimators that we study in this paper exploit the structure of the random fluctuations of $X^{\varepsilon}$ about this limit; to do this, we must obtain a description of the asymptotic behavior to higher order.

This, in turn, turns on quantifying more precisely the difference between the true drift $\frac\epsilon\delta b(X^{\varepsilon}_t, Y^{\varepsilon}_t)+c(X^{\varepsilon}_t, Y^{\varepsilon}_t)$ and the approximate drift $\bar\lambda_*(\bar X_t)$. The limiting coefficient $\lambda_*$ plays the role of intermediary. By Theorem 3 in \cite{pardoux2003poisson}, there is a unique solution $\Phi_*$ in the class of functions that grow at most polynomially in $|y|$ as $y\to\infty$ of the equation
\begin{align}
	\mathcal{L}_{*,x}\Phi_*(x,y)&=-\left(\lambda_*-\bar\lambda_*\right)(x,y),\quad \int_\mathcal{Y}\Phi_*(x,y)\mu_{*,x}(dy)=0.\label{phifunction}
	\end{align}
The function $\Phi$ will feature in our description of the fluctuations.

We need to define one more quantity before stating the next result. For any $0\leq s\leq t\leq T$, let $Z_{*}(t,s)$ denote the matrix-valued solution to the equation
\begin{align}
\frac{d Z_{*}(t,s)}{dt}&=(\nabla_x\bar\lambda_{*})(\bar X_{*,t})Z_{*}(t,s), \quad Z_{*}(s,s)=1_{m},\label{Eq:MatrixODE}
\end{align}
where $1_{m}$ denotes the $m\times m$ identity matrix. By Proposition 2.14 in \cite{Chicone}, the continuity of $t\mapsto (\nabla_x\bar\lambda_{*})(\bar X_{*,t})$ on $(0,T]$ guarantees the semi-group relations
$Z_{*}(t,s)=Z_{*}(t,\rho)Z_{*}(\rho,s)$ and the invertibility of $Z_{*}(t,s)$.

\begin{theorem}\label{philimit}
Assume Conditions \ref{basicconditions}, \ref{recurrencecondition}, and \ref{ellcondition}, and, in the $\infty$ regime, Condition \ref{centeringcondition}.

In the $\infty$ regime, we have a representation
\begin{align}
\frac1{\sqrt\epsilon}(X^{\varepsilon}_t-\bar X_{\infty,t})&=\frac1\ell_\infty\int^t_0Z_{\infty}(t,s)\overline{\nabla_y\Phi_\infty\cdot g}(\bar X_{\infty,s})ds+\int^t_0Z_{\infty}(t,s)(\sigma+\nabla_y\chi\cdot\tau_1)(X^{\varepsilon}_s,Y^{\varepsilon}_s)dW_s\label{phione}\\
&\hspace{4pc}+\int^t_0Z_{\infty}(t,s)\nabla_y\chi\cdot\tau_2(X^{\varepsilon}_s,Y^{\varepsilon}_s)dB_s+\tilde{\mathcal R}^\varepsilon_{\infty,t}\nonumber
\end{align}
such that for any $\eta>0$, we have $\lim_{\epsilon\to0}P\left(\sup_{0\leq t\leq T}|\tilde{\mathcal R}^\varepsilon_{\infty,t}|>\eta\right)=0$.

In the $\gamma$ regime, we have a representation
\begin{align}
\frac1{\sqrt\epsilon}(X^{\varepsilon}_t-\bar X_{\gamma,t})&=\frac1{\ell_\gamma\cdot\gamma}\int^t_0Z_{\gamma}(t,s)\overline{\gamma b-\nabla_y\Phi_\gamma\cdot g}(\bar X_{\gamma,s})ds+\int^t_0Z_{\gamma}(t,s)(\sigma+\nabla_y\Phi_\gamma\cdot\tau_1)(X^{\varepsilon}_s,Y^{\varepsilon}_s)dW_s\label{phitwo}\\
&\hspace{4pc}+\int^t_0Z_{\gamma}(t,s)\nabla_y\Phi_\gamma\cdot\tau_2(X^{\varepsilon}_s,Y^{\varepsilon}_s)dB_s+\tilde{\mathcal R}^\varepsilon_{\gamma,t}\nonumber
\end{align}
such that for any $\eta>0$, we have $\lim_{\epsilon\to0}P\left(\sup_{0\leq t\leq T}|\tilde{\mathcal R}^\varepsilon_{\gamma,t}|>\eta\right)=0$.
\end{theorem}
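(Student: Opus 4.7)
The overall plan is to apply Itô's formula to the Poisson-equation solutions $\chi$ and $\Phi_*$ to re-express the singular and fluctuating parts of the drift of $X^\varepsilon$ as total differentials (boundary terms), explicit drift corrections, and martingales; to Taylor-expand $\bar\lambda_*$ around $\bar X_*$ to produce a linearized integral equation whose fundamental solution is $Z_*$; and then to solve that equation by variation of constants, thereby obtaining \eqref{phione} and \eqref{phitwo}. Smallness of the remainder $\tilde{\mathcal R}_*^\varepsilon$ will be controlled using Theorem \ref{xlimit}, Lemma \ref{ergodiclemma}, and the scaling imposed by Condition \ref{ellcondition}.

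Consider the $\infty$ regime first. The leading $\frac{\epsilon}{\delta^2}$-order piece of $d\chi(X^\varepsilon_t,Y^\varepsilon_t)$ is $\frac{\epsilon}{\delta^2}(\mathcal L_{\infty,x}\chi)(X^\varepsilon_t,Y^\varepsilon_t)\,dt = -\frac{\epsilon}{\delta^2}b(X^\varepsilon_t,Y^\varepsilon_t)\,dt$ by construction of $\chi$; multiplying by $\delta$ and solving for $\int_0^t\frac{\epsilon}{\delta}b\,ds$ lets one recast the slow SDE as having drift $\lambda_\infty = \nabla_y\chi\cdot g + c$ together with additional martingale increments $\sqrt\epsilon\,\nabla_y\chi\cdot(\tau_1\,dW + \tau_2\,dB)$, modulo boundary terms of order $\delta$ and Itô cross terms of order $\sqrt\epsilon\,\delta$ or smaller. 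A second application of Itô, this time to $\Phi_\infty(X^\varepsilon_t,Y^\varepsilon_t)$, produces the identity
\begin{align*}
\int_0^t [\lambda_\infty-\bar\lambda_\infty](X^\varepsilon_s,Y^\varepsilon_s)\,ds = \frac{\delta}{\epsilon}\int_0^t \nabla_y\Phi_\infty\cdot g(X^\varepsilon_s,Y^\varepsilon_s)\,ds + \frac{\delta}{\sqrt\epsilon}\mathcal M^\varepsilon_t + O\Bigl(\tfrac{\delta^2}{\epsilon}\Bigr),
\end{align*}
with $\mathcal M^\varepsilon_t$ a bounded-in-$L^2$ martingale built from $\nabla_y\Phi_\infty\cdot\tau_i$. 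Dividing by $\sqrt\epsilon$ and invoking Condition \ref{ellcondition}(i), the prefactor $\delta/\epsilon^{3/2}\to 1/\ell_\infty$ identifies the leading $\frac{1}{\ell_\infty}$ drift in \eqref{phione}, while $\delta/\epsilon\to 0$ and $\delta^2/\epsilon^{3/2}\to 0$ kill the other two contributions.

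The remaining ingredient, $\int_0^t[\bar\lambda_\infty(X^\varepsilon_s)-\bar\lambda_\infty(\bar X_{\infty,s})]\,ds$, is handled by a first-order Taylor expansion yielding $(\nabla_x\bar\lambda_\infty)(\bar X_{\infty,s})(X^\varepsilon_s-\bar X_{\infty,s})$ plus a quadratic remainder whose $L^p$-norm is $O(\epsilon)$ by Theorem \ref{xlimit}. Setting $\varphi^\varepsilon_\infty(t) := (X^\varepsilon_t-\bar X_{\infty,t})/\sqrt\epsilon$, one arrives at an integral equation
\begin{align*}
\varphi^\varepsilon_\infty(t) = \int_0^t(\nabla_x\bar\lambda_\infty)(\bar X_{\infty,s})\,\varphi^\varepsilon_\infty(s)\,ds + F^\varepsilon_\infty(t) + \mathcal R^\varepsilon(t),
\end{align*}
where $F^\varepsilon_\infty(t)$ collects the three forcing terms that appear in \eqref{phione} prior to their convolution with $Z_\infty$, and $\sup_{t\in[0,T]}|\mathcal R^\varepsilon(t)|\to 0$ in probability. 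Variation of constants against the fundamental matrix $Z_\infty$ (which exists and is invertible by Proposition 2.14 of \cite{Chicone}) yields exactly \eqref{phione}, with $\tilde{\mathcal R}^\varepsilon_{\infty,t}$ the $Z_\infty$-convolution of $\mathcal R^\varepsilon$. The $\gamma$ regime is entirely parallel and in fact easier: since $\frac\epsilon\delta b$ is no longer singular, no $\chi$ substitution is needed; one decomposes $\frac\epsilon\delta b + c = \lambda_\gamma + (\frac\epsilon\delta - \gamma)b$, uses $\Phi_\gamma$ to handle $\lambda_\gamma-\bar\lambda_\gamma$ as above, and applies Condition \ref{ellcondition}(ii) — whereby $\sqrt\epsilon/(\frac\epsilon\delta - \gamma)\to \ell_\gamma$ — to produce the $\frac{1}{\ell_\gamma\cdot\gamma}$ drift in \eqref{phitwo}.

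The main technical obstacle is the bookkeeping of the many residual terms generated by the two Itô expansions and the verification that each vanishes uniformly in $t\in[0,T]$ in probability after division by $\sqrt\epsilon$. These include boundary contributions of the form $\delta\,\chi(X^\varepsilon_t,Y^\varepsilon_t)$ and $\frac{\delta^2}{\epsilon}\Phi_\infty(X^\varepsilon_t,Y^\varepsilon_t)$, mixed quadratic-variation terms involving $\nabla_x\nabla_y\chi$, $\nabla_x^2\Phi_*$, and similar higher derivatives, and scaled martingales of the form $\frac{\delta}{\epsilon}\int_0^t\nabla_y\Phi_\infty\cdot\tau_i\,dM$. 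Control of the first two families depends on the polynomial growth-in-$y$ bounds for $\chi$, $\Phi_*$, and their derivatives supplied by Theorem 3 of \cite{pardoux2003poisson}, together with uniform-in-$\varepsilon$ polynomial moment bounds on $\sup_{t\in[0,T]}|Y^\varepsilon_t|$ from Lemma \ref{ergodiclemma}; control of the scaled martingales requires Doob's maximal inequality to convert $L^2$-estimates into the needed uniform-in-time probability bounds. The scaling of Condition \ref{ellcondition} is exactly sharp enough for all these residuals to vanish in the limit while the $1/\ell_*$-drift survives nontrivially, and getting this balance right is the most delicate point of the argument.
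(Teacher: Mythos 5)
Your proposal follows essentially the same route as the paper's proof: the same three-part decomposition (first-order Taylor expansion of $\bar\lambda_*$ about $\bar X_*$, It\^o applied to the Poisson solution $\Phi_*$ for the fluctuating drift, It\^o applied to $\chi$ for the singular term), the same identification of which prefactors survive via Condition \ref{ellcondition}, and the same assembly into a linear integral equation solved by variation of constants against $Z_*$. The only slip is attributional: the uniform-in-time control of the boundary terms $\delta\,\chi(X^{\varepsilon}_t,Y^{\varepsilon}_t)$ and $\frac{\delta^2}{\epsilon}\Phi_*(X^{\varepsilon}_t,Y^{\varepsilon}_t)$ does not come from moment bounds on $\sup_{0\leq t\leq T}|Y^{\varepsilon}_t|$ via Lemma \ref{ergodiclemma} (which is the ergodic-average estimate), but rather from Lemma \ref{boundlemma} together with the argument of Corollary 1 in \cite{pardoux2001poisson}.
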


\begin{remark}
Before proceeding to the proof, we mention that a similar result, albeit one insufficient for our purposes, is established in \cite{spiliopoulos2014fluctuation}. It is proven there that the fluctuations process $\eta^{\varepsilon}_{t}:=\frac{X^{\varepsilon}_{t}-\bar{X}_{t}}{\sqrt{\epsilon}}$ converges weakly in the space of continuous functions $\mathcal{C}\left([0,T],\mathbb{R}^{m}\right)$
to the solution of a certain Ornstein-Uhlenbeck-type process $\eta_{t}$ (with different dynamics for the different regimes). Unfortunately, this result is of limited use in the statistical setting because the limit is in distribution, which means that prelimit and limit do not live necessarily, as such, on the same space; hence the need for Theorem \ref{philimit}, which establishes a representation in path space.
\end{remark}

\noindent\textit{Proof of Theorem \ref{philimit}.} Let us suppress the regime subscript and suppose at first that we are in the $\infty$ regime. We write
\begin{align*}
\frac1{\sqrt\epsilon}(X^{\varepsilon}_t-\bar X_t)&=I+II+III,
\end{align*}
where
\begin{align*}
I&:=\frac1{\sqrt\epsilon}\int^t_0\left(\bar\lambda(X^{\varepsilon}_s)-\bar\lambda(\bar X_s)\right)ds,\\
II&:=\frac1{\sqrt\epsilon}\int^t_0\left(\lambda(X^{\varepsilon}_s,Y^{\varepsilon}_s)-\bar\lambda(X^{\varepsilon}_s)\right)ds,\\
III&:=\int^t_0\sigma(X^{\varepsilon}_s,Y^{\varepsilon}_s)dW_s+\frac1{\sqrt\epsilon}\int^t_0\left(\frac\epsilon\delta b-\nabla_y\chi\cdot g\right)(X^{\varepsilon}_s,Y^{\varepsilon}_s)ds.
\end{align*}

We begin with an approximation of each part.
\begin{align*}
I&=\int^t_0(\nabla_x\bar\lambda)(\bar X_s)\cdot\frac1{\sqrt\epsilon}(X^{\varepsilon}_s-\bar X_s)ds+\int^t_0\left[(\nabla_x\bar\lambda)(X^{\varepsilon,\dagger}_s)-(\nabla_x\bar\lambda)(\bar X_s)\right]\cdot\frac1{\sqrt\epsilon}(X^{\varepsilon}_s-\bar X_s)ds\\
&=:\int^t_0(\nabla_x\bar\lambda)(\bar X_s)\cdot\frac1{\sqrt\epsilon}(X^{\varepsilon}_s-\bar X_s)ds+\mathcal R^\varepsilon_{I,t},
\end{align*}
where $X^{\epsilon,\dagger}_s$ is an appropriately-chosen point on the segment connecting $X^{\varepsilon}_s$ with $\bar X_s$.
\begin{align}
\mathcal R^\varepsilon_{I,t}:=\int^t_0\left[(\nabla_x\bar\lambda)(X^{\varepsilon,\dagger}_s)-(\nabla_x\bar\lambda)(\bar X_s)\right]\cdot\frac1{\sqrt\epsilon}(X^{\varepsilon}_s-\bar X_s)ds\nonumber
\end{align}
vanishes in probability uniformly in $t$ as $\epsilon\to0$; to see this, note firstly that $\int^t_0\left[(\nabla_x\bar\lambda)(X^{\epsilon,\dagger}_s)-(\nabla_x\bar\lambda)(\bar X_s)\right]ds$ vanishes in probability by compactness of $[0,T]$, continuity of $\nabla_x\bar\lambda$, and Theorem \ref{xlimit}, and secondly that $\sup_{0\leq t \leq T}\left|\frac1{\sqrt\epsilon}(X^{\varepsilon}_t-\bar X_t)\right|$ is bounded in probability by Theorem \ref{xlimit}.

Letting $\Phi$ be as in (\ref{phifunction}), applying the It\^o formula to $\Phi(x,y)$ with $(x,y)=(X^{\varepsilon}_t, Y^{\varepsilon}_t)$, and rearranging terms, we have
\begin{align}
II&=\frac\delta{\epsilon^{3/2}}\int^t_0\nabla_y\Phi\cdot g(X^{\varepsilon}_s,Y^{\varepsilon}_s)ds\label{Phireference}\\
&\hspace{4pc}+\frac\delta{\sqrt\epsilon}\int^t_0\nabla_x\Phi\cdot b(X^{\varepsilon}_s,Y^{\varepsilon}_s)ds+\frac{\delta^2}{\epsilon^{3/2}}\int^t_0\nabla_x\Phi\cdot c(X^{\varepsilon}_s,Y^{\varepsilon}_s)ds+\frac{\delta^2}{\sqrt\epsilon}\int^t_0\sigma\sigma^T:\nabla^2_x\Phi(X^{\varepsilon}_s,Y^{\varepsilon}_s)ds\nonumber\\
&\hspace{4pc}+\frac\delta{\sqrt\epsilon}\int^t_0\sigma\tau_1^T:\nabla_y\nabla_x\Phi(X^{\varepsilon}_s,Y^{\varepsilon}_s)ds+\frac{\delta^2}\epsilon\int^t_0\nabla_x\Phi\cdot\sigma(X^{\varepsilon}_s,Y^{\varepsilon}_s)dW_s\nonumber\\
&\hspace{4pc}+\frac\delta\epsilon\int^t_0(\nabla_y\Phi\cdot\tau_1,\nabla_y\Phi\cdot\tau_2)(X^{\varepsilon}_s,Y^{\varepsilon}_s)d(W,B)_s+\frac{\delta^2}{\epsilon^{3/2}}(\Phi(X^{\varepsilon}_t,Y^{\varepsilon}_t)-\Phi(x_0,y_0))\nonumber\\
&=\frac1\ell\int^t_0\overline{\nabla_y\Phi\cdot g}(\bar X_s)ds+\frac1\ell\int^t_0\left[\nabla_y\Phi\cdot g(X^{\varepsilon}_s,Y^{\varepsilon}_s)-\overline{\nabla_y\Phi\cdot g}(\bar X_s)\right]ds+\left(\frac\delta{\epsilon^{3/2}}-\frac1\ell\right)\int^t_0\nabla_y\Phi\cdot g(X^{\varepsilon}_s,Y^{\varepsilon}_s)ds\nonumber\\
&\hspace{4pc}+\frac\delta{\sqrt\epsilon}\int^t_0\nabla_x\Phi\cdot b(X^{\varepsilon}_s,Y^{\varepsilon}_s)ds+\frac{\delta^2}{\epsilon^{3/2}}\int^t_0\nabla_x\Phi\cdot c(X^{\varepsilon}_s,Y^{\varepsilon}_s)ds+\frac{\delta^2}{\sqrt\epsilon}\int^t_0\sigma\sigma^T:\nabla^2_x\Phi(X^{\varepsilon}_s,Y^{\varepsilon}_s)ds\nonumber\\
&\hspace{4pc}+\frac\delta{\sqrt\epsilon}\int^t_0\sigma\tau_1^T:\nabla_y\nabla_x\Phi(X^{\varepsilon}_s,Y^{\varepsilon}_s)ds+\frac{\delta^2}\epsilon\int^t_0\nabla_x\Phi\cdot\sigma(X^{\varepsilon}_s,Y^{\varepsilon}_s)dW_s\nonumber\\
&\hspace{4pc}+\frac\delta\epsilon\int^t_0(\nabla_y\Phi\cdot\tau_1,\nabla_y\Phi\cdot\tau_2)(X^{\varepsilon}_s,Y^{\varepsilon}_s)d(W,B)_s-
(\delta/\epsilon)(\delta/\sqrt{\epsilon})(\Phi(X^{\varepsilon}_t,Y^{\varepsilon}_t)-\Phi(x_0,y_0))\nonumber\\
&=:\frac1\ell\int^t_0\overline{\nabla_y\Phi\cdot g}(\bar X_s)ds+\mathcal R^\varepsilon_{II,t}.\nonumber
\end{align}
$\mathcal R^\varepsilon_{II}$ vanishes in probability uniformly in $t$ as $\epsilon\to0$; to see this, note firstly that

\noindent $\frac1\ell\int^t_0\left[\nabla_y\Phi\cdot g(X^{\varepsilon}_s,Y^{\varepsilon}_s)-\overline{\nabla_y\Phi\cdot g}(\bar X_s)\right]ds$ vanishes in probability by Lemma \ref{ergodiclemma}, secondly that $(\delta/\sqrt{\epsilon})(\Phi(X^{\varepsilon}_t,Y^{\varepsilon}_t)-\Phi(x_0,y_0))$ vanishes in probability by the argument of Corollary 1 in \cite{pardoux2001poisson}, and thirdly that the remaining integrals are bounded in probability by Lemma \ref{boundlemma} while their prefactors vanish.

Letting $\chi$ be as in (\ref{chifunction}), applying the It\^o formula to $\chi(x,y)$ with $(x,y)=(X^{\varepsilon}_t, Y^{\varepsilon}_t)$, and rearranging terms, we have
\begin{align*}
III&=\int^t_0(\sigma+\nabla_y\chi\cdot\tau_1)(X^{\varepsilon}_s,Y^{\varepsilon}_s)dW_s+\int^t_0\nabla_y\chi\cdot\tau_2(X^{\varepsilon}_s,Y^{\varepsilon}_s)dB_s\\
&\hspace{1pc}+\sqrt\epsilon\int^t_0\nabla_x\chi\cdot b(X^{\varepsilon}_s,Y^{\varepsilon}_s)ds+\frac\delta{\sqrt\epsilon}\int^t_0\nabla_x\chi\cdot c(X^{\varepsilon}_s,Y^{\varepsilon}_s)ds+\sqrt\epsilon\delta\int^t_0\sigma\sigma^T:\nabla^2_x\chi(X^{\varepsilon}_s,Y^{\varepsilon}_s)ds\\
&\hspace{1pc}+\sqrt\epsilon\int^t_0\sigma\tau^T_1:\nabla_y\nabla_x\chi(X^{\varepsilon}_s,Y^{\varepsilon}_s)ds+\delta\int^t_0\nabla_x\chi\cdot\sigma(X^{\varepsilon}_s,Y^{\varepsilon}_s)dW_s-(\delta/\sqrt{\epsilon})\left(\chi(X^{\varepsilon}_t,Y^{\varepsilon}_t)-\chi(x_0,y_0)\right)\\
&=:\int^t_0(\sigma+\nabla_y\chi\cdot\tau_1)(X^{\varepsilon}_s,Y^{\varepsilon}_s)dW_s+\int^t_0\nabla_y\chi\cdot\tau_2(X^{\varepsilon}_s,Y^{\varepsilon}_s)dB_s+\mathcal R^\varepsilon_{III,t}.
\end{align*}
$\mathcal R^\varepsilon_{III}$ vanishes in probability uniformly in $t$ as $\epsilon\to0$; to see this, note firstly that

\noindent $(\delta/\sqrt{\epsilon})\left(\chi(X^{\varepsilon}_t,Y^{\varepsilon}_t)-\chi(x_0,y_0)\right)$ vanishes in probability by the argument of Corollary 1 in \cite{pardoux2001poisson}, and secondly that the remaining integrals are bounded in probability by Lemma \ref{boundlemma} while their prefactors vanish.

Putting everything together and setting $\mathcal R^\varepsilon:=\mathcal R^\varepsilon_I+\mathcal R^\varepsilon_{II}+\mathcal R^\varepsilon_{III}$, we have
\begin{align*}
\frac1{\sqrt\epsilon}(X^{\varepsilon}_t-\bar X_t)&=\int^t_0(\nabla_x\bar\lambda)(\bar X_s)\cdot\frac1{\sqrt\epsilon}(X^{\varepsilon}_s-\bar X_s)ds+\frac1\ell\int^t_0\overline{\nabla_y\Phi\cdot g}(\bar X_s)ds\\
&\hspace{4pc}+\int^t_0(\sigma+\nabla_y\chi\cdot\tau_1)(X^{\varepsilon}_s,Y^{\varepsilon}_s)dW_s+\int^t_0\nabla_y\chi\cdot\tau_2(X^{\varepsilon}_s,Y^{\varepsilon}_s)dB_s+\mathcal R^\varepsilon_t.
\end{align*}

The desired representation follows; $\tilde{\mathcal R}^\varepsilon$ vanishes in probability by the same arguments as did $\mathcal{R}^\varepsilon$.

The proof for the $\gamma$ regime is similar and is therefore omitted. This concludes the proof of the theorem.

\qed

\begin{remark}
It is not difficult to see that if, for example, $\nabla_x\bar\lambda$ is globally Lipschitz, then in fact $\mathcal{R}^\varepsilon$ and $\tilde{\mathcal R}^\varepsilon$ vanish in $\mathcal L^p$. This stronger convergence is however not necessary for our purposes.
\end{remark}

\section{Statistical Inference for a fixed number of observations}\label{S:InferenceFixedNumberObs}
Let us now use Theorem \ref{philimit} to motivate the statistical estimators for the unknown parameter vector $\theta$. Suppose that the drift coefficients in the model are known functions of an unknown parameter $\theta$ which we wish to estimate using a discrete-time sample $\{x_{t_k}\}_{k=1}^n$ from the slow process; here $n>0$ is a fixed positive integer and $\{t_k\}_{k=1}^n\subset(0,T]$ is an increasing sequence of positive times. To simplify the presentation, let us assume a uniform sampling interval $\Delta:=T/n$ and adopt the convention that $t_0=0$; hence ${t_k}=k\Delta$ for $k=0, 1, $ \ldots $n$.

To motivate our estimator, recall the representation of $X^{\varepsilon}$ established in Theorem \ref{philimit}. Set, in the $\infty$ regime,
\begin{align*}
J^\theta_\infty(x,y)&:=\frac 1{\ell_\infty}(\nabla\Phi_\infty\cdot g)^\theta(x,y),\\
q^\theta_\infty(x,y)&:=[(\sigma+\nabla_y\chi\cdot\tau_1)(\sigma+\nabla_y\chi\cdot\tau_1)^T+(\nabla_y\chi\cdot\tau_2)(\nabla_y\chi\cdot\tau_2)^T]^\theta(x,y)
\end{align*}
and, in the $\gamma$ regime,
\begin{align*}
J^\theta_\gamma(x,y)&:=\frac 1{\ell_\gamma\cdot\gamma}(\gamma b-\nabla_y\Phi_\gamma\cdot g)^\theta(x,y),\\
q^\theta_\gamma(x,y)&:=[(\sigma+\nabla_y\Phi_\gamma\cdot\tau_1)(\sigma+\nabla_y\Phi_\gamma\cdot\tau_1)^T+(\nabla_y\Phi_\gamma\cdot\tau_2)(\nabla_y\Phi_\gamma\cdot\tau_2)^T]^\theta(x,y).
\end{align*}

Omitting the regime subscript and denoting by $Z^{\theta}(t,s)$ the $\theta$-dependent solution to (\ref{Eq:MatrixODE}), equations (\ref{phione}) and (\ref{phitwo}) suggest heuristically that
\begin{align}
X^{\varepsilon,\theta}_t-\bar X^\theta_t\approx\sqrt\epsilon\int^t_0Z^{\theta}(t,s)\cdot\bar J^\theta(\bar X^\theta_s)ds+\sqrt\epsilon \int^t_0Z^{\theta}(t,s)\cdot (\bar q^\theta)^{1/2}(\bar X^\theta_s)d\tilde W_s,\label{phithree}
\end{align}
where $\tilde W$ is a Wiener process. We emphasize that this approximation is not mathematically correct. Apart from ignoring the remainder term $\tilde{\mathcal{R}}^\varepsilon$, we have replaced the It\^o integral terms with a new It\^o integral matching the limiting variance. Nevertheless, proceeding from `equation' (\ref{phithree}),
\begin{align*}
&\left[X^{\varepsilon,\theta}_{t_k}-\bar X^\theta_{t_k}\right]-Z^{\theta}(t_k,t_{k-1})\cdot\left[X^{\varepsilon,\theta}_{t_{k-1}}-\bar X^\theta_{t_{k-1}}\right]\\
&\hspace{8pc}\approx\sqrt\epsilon\int^{t_k}_{t_{k-1}}Z^{\theta}(t_k,s)\cdot\bar J^\theta(\bar X^\theta_s)ds+\sqrt\epsilon \int^{t_k}_{t_{k-1}}Z^{\theta}(t_k,s)\cdot (\bar q^\theta)^{1/2}(\bar X^\theta_s)d\tilde W_s,
\end{align*}
or what is the same upon rearranging terms,
\begin{align*}
X^{\varepsilon,\theta}_{t_k}\approx\bar X^\theta_{t_k}+Z^{\theta}(t_k,t_{k-1})\cdot\left[X^{\varepsilon,\theta}_{t_{k-1}}-\bar X^\theta_{t_{k-1}}\right]+\sqrt\epsilon\int^{t_k}_{t_{k-1}}Z^{\theta}(t_k,s)\cdot\bar J^\theta(\bar X^\theta_s)ds+\sqrt\epsilon \int^{t_k}_{t_{k-1}}Z^{\theta}(t_k,s)\cdot (\bar q^\theta)^{1/2}(\bar X^\theta_s)d\tilde W_s.
\end{align*}

In this way, it would appear that for each $k$, the conditional distribution of $X^{\varepsilon}_{t_k}$ given $X^{\varepsilon}_{t_{k-1}}=x_{t_{k-1}}$ is approximated by a Gaussian distribution with mean
\begin{align}
\bar X_{{t_k}}^{\theta}+Z^{\theta}(t_k,t_{k-1})\cdot\left[x_{t_{k-1}}-\bar X_{t_{k-1}}^{\theta}\right]+\sqrt\epsilon\int^{t_k}_{t_{k-1}}Z^{\theta}(t_k,s)\cdot\bar J^{\theta}(\bar X_{s}^{\theta})ds\nonumber
\end{align}
and variance $\epsilon\int^{t_k}_{t_{k-1}}Z^{\theta}(t_k,s)\cdot \bar q^{\theta}(\bar X_{s}^{\theta})\cdot (Z^{\theta})^{T}(t_k,s)ds.$

 This heuristic Gaussian approximation to the increments may be regarded as a misspecified model,  the principle of maximum likelihood applied to which suggests the minimum contrast estimator (MCE)
\begin{align}
\bar\theta^\varepsilon(\{x_{t_k}\}_{k=1}^n):=\arg\min_{\theta\in\bar\Theta}U^\varepsilon(\theta; \{x_{t_k}\}_{k=1}^n)\label{thetabar}
\end{align}
with contrast function
\begin{align}
U^\varepsilon(\theta; \{x_{t_k}\}_{k=1}^n)&:=\Sigma_{k=1}^n \bigg[ (F^\varepsilon_k)^T(\theta; \{x_{t_k}\}_{k=1}^n) \cdot Q_k^{-1}(\theta) \cdot F^\varepsilon_k(\theta; \{x_{t_k}\}_{k=1}^n)\bigg],\label{thecontrastfunction}
\end{align}
where
\begin{align}
F^\varepsilon_k(\theta; \{x_{t_k}\}_{k=1}^n)&:=\left[[x_{t_k}-\bar X^\theta_{t_k}]-Z^{\theta}(t_k,t_{k-1})\cdot[x_{t_{k-1}}-\bar X^\theta_{t_{k-1}}]\right]\label{effdef}\\
&\hspace{2pc}-\sqrt\epsilon\int^{t_k}_{t_{k-1}}Z^{\theta}(t_k,s)\cdot\bar  J^\theta(\bar X^\theta_{s})ds,\nonumber\\
Q_k(\theta)&:=\int^{t_k}_{t_{k-1}}Z^{\theta}(t_k,s)\cdot \bar q^\theta(\bar X^\theta_{s})\cdot (Z^{\theta})^{T}(t_k,s)ds.\nonumber
\end{align}
By Lemma \ref{qlemma} in the Appendix we have that the inverse matrices $Q^{-1}_k$ exist.

The rest of this section studies regularity of the MCE when $n$ is held fixed while $\epsilon+\delta\to0$. Theorem \ref{consistency} establishes that the MCE is a consistent estimator of the true value $\theta_0$, and Theorem \ref{normality} that it is asymptotically normal, with a limiting variance $M(\theta; n)$ which we calculate explicitly. Finally, Lemma \ref{L:FisherInformation} shows that this limiting variance attains, in the limit as $n\to\infty$, the Cram\'er-Rao bound for the continuous-data estimation problem, which is to say that the estimator is asymptotically statistically efficient as first $\epsilon+\delta\to0$ and then $n\to\infty$.

Before developing the theory, we need to define the limiting contrast function
\begin{align*}
\tilde U(\theta; \{x_{t_k}\}_{k=1}^n)&:=\Sigma_{k=1}^n \bigg[ \tilde F_k^T(\theta; \{x_{t_k}\}_{k=1}^n) \cdot Q_k^{-1}(\theta) \cdot \tilde F_k(\theta; \{x_{t_k}\}_{k=1}^n)\bigg],
\end{align*}
where
\begin{align}
\tilde F_k(\theta; \{x_{t_k}\}_{k=1}^n)&:=\left[x_{t_k}-\bar X^\theta_{t_k}\right]
-Z^{\theta}(t_k,t_{k-1})\cdot\left[x_{t_{k-1}}-\bar X^\theta_{t_{k-1}}\right].\label{Eq:ReducedF}
\end{align}

By Lemma \ref{contrastlimit} in the Appendix, we have that
\begin{align*}
\lim_{\epsilon\to0}P\left(\sup_{\theta_1,\theta_2\in\Theta}|U^\varepsilon(\theta_2; \{X^{\epsilon,\theta_1}_{t_k}\}_{k=1}^n)-\tilde U(\theta_2; \{\bar X^{\theta_1}_{t_k}\}_{k=1}^n)|>\eta\right)=0.
\end{align*}

Denoting by $\theta_0$ the true value of the unknown parameter, we must assume an appropriate identifiability condition to guarantee that $\tilde U(\theta; \{\bar X^{\theta_0}_{t_k}\}_{k=1}^n)$ is uniquely minimized at $\theta=\theta_0$.

\begin{condition} (Identifiability Condition 1)\label{id1}
For any $\theta_0\in\Theta$, $\bar X^{\theta}_{t_k}=\bar X^{\theta_0}_{t_k}$ for $k=1, 2, $ \ldots $n$ if and only if $\theta=\theta_0$.
\end{condition}

\begin{remark}
Condition \ref{id1} implies that $\tilde F_k(\theta; \{\bar X^{\theta_0}\}_{k=1}^n)=0$ for $k=1, 2, $ \ldots $n$ if and only if $\theta=\theta_0$, and hence that $\tilde U(\theta; \{\bar X^{\theta_0}_{t_k}\}_{k=1}^n)$ is uniquely minimized at $\theta=\theta_0$.
\end{remark}

\begin{theorem}\label{consistency}(Consistency of the MCE) Assume Conditions \ref{basicconditions}, \ref{recurrencecondition}, \ref{ellcondition}, and \ref{id1}, and, in the $\infty$ regime, Condition \ref{centeringcondition}. For any $\theta_0\in\Theta$ and $\eta>0$,
\begin{align*}
\lim_{\epsilon\to0}P\left(|\bar\theta^\varepsilon(\{X^{\varepsilon,\theta_0}_{t_k}\}_{k=1}^n)-\theta_0|>\eta\right)=0.
\end{align*}
\end{theorem}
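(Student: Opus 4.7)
The plan is to deploy the standard argmax/argmin continuous mapping approach for M-estimators: show that the random contrast function $U^\varepsilon(\cdot; \{X^{\varepsilon,\theta_0}_{t_k}\}_{k=1}^n)$ converges in probability, uniformly in $\theta\in\bar\Theta$, to a deterministic limit whose unique minimizer is $\theta_0$, and then invoke a well-separated-minimum argument to conclude that the minimizers converge in probability.

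\textbf{Step 1: Uniform convergence of the contrast function.} Specializing the already-cited Lemma \ref{contrastlimit} to $\theta_1=\theta_0$, we have
\begin{align*}
\sup_{\theta\in\bar\Theta}\bigl|U^\varepsilon(\theta; \{X^{\varepsilon,\theta_0}_{t_k}\}_{k=1}^n) - \tilde U(\theta; \{\bar X^{\theta_0}_{t_k}\}_{k=1}^n)\bigr| \xrightarrow{P} 0 \quad \text{as } \epsilon\to 0.
\end{align*}
This is the workhorse input; its proof ultimately rests on Theorem \ref{xlimit} (so $X^{\varepsilon,\theta_0}_{t_k}\to \bar X^{\theta_0}_{t_k}$) together with continuity of the maps $\theta\mapsto\bar X^\theta$, $\theta\mapsto Z^\theta$, $\theta\mapsto Q_k(\theta)^{-1}$ and $\theta\mapsto \bar J^\theta$ in the contrast, and the regularity assumed in Condition \ref{basicconditions}.

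\textbf{Step 2: Identification of the limit.} I would argue that $\theta_0$ is a well-separated minimum of $\theta\mapsto \tilde U(\theta; \{\bar X^{\theta_0}_{t_k}\}_{k=1}^n)$. By Lemma \ref{qlemma}, each $Q_k(\theta)$ is positive definite, so its inverse is positive definite and hence
\begin{align*}
\tilde U(\theta; \{\bar X^{\theta_0}_{t_k}\}_{k=1}^n) = \sum_{k=1}^{n} \tilde F_k^T(\theta; \{\bar X^{\theta_0}_{t_k}\}_{k=1}^n)\, Q_k^{-1}(\theta)\, \tilde F_k(\theta; \{\bar X^{\theta_0}_{t_k}\}_{k=1}^n) \geq 0,
\end{align*}
with equality if and only if each $\tilde F_k$ vanishes. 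Writing out (\ref{Eq:ReducedF}) with $x_{t_k}=\bar X^{\theta_0}_{t_k}$ and inducting on $k$ (starting from $\tilde F_1=\bar X^{\theta_0}_{t_1}-\bar X^{\theta}_{t_1}$, since both trajectories start at $x_0$), simultaneous vanishing of all $\tilde F_k$ is equivalent to $\bar X^\theta_{t_k}=\bar X^{\theta_0}_{t_k}$ for every $k$, which by Condition \ref{id1} forces $\theta=\theta_0$. For well-separation, fix $\eta>0$; by continuity of $\theta\mapsto \tilde U(\theta;\{\bar X^{\theta_0}_{t_k}\}_{k=1}^n)$ on the compact set $\bar\Theta\cap\{|\theta-\theta_0|\geq\eta\}$ and the unique-minimum property just established, there exists $\rho(\eta)>0$ such that $\tilde U(\theta;\cdot)\geq \tilde U(\theta_0;\cdot)+\rho(\eta)=\rho(\eta)$ on that set.

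\textbf{Step 3: Conclude.} On the event $\{\sup_{\theta\in\bar\Theta}|U^\varepsilon-\tilde U|<\rho(\eta)/3\}$, every $\theta$ with $|\theta-\theta_0|\geq\eta$ satisfies $U^\varepsilon(\theta)\geq \rho(\eta)-\rho(\eta)/3 = 2\rho(\eta)/3$, while $U^\varepsilon(\theta_0)\leq \rho(\eta)/3$; hence any minimizer $\bar\theta^\varepsilon$ must lie in $\{|\theta-\theta_0|<\eta\}$. Step 1 guarantees that the probability of this event tends to one, yielding the desired conclusion.

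\textbf{Where the difficulty sits.} The only genuinely non-routine input is Step 1 --- the joint-in-$\theta$ uniform convergence of $U^\varepsilon$ to $\tilde U$ when evaluated on the random trajectory $\{X^{\varepsilon,\theta_0}_{t_k}\}$ --- because $U^\varepsilon$ contains the additional drift correction $\sqrt\epsilon\int Z^\theta\cdot\bar J^\theta\,ds$ that is absent from $\tilde U$, and one must control both the remainder $\tilde{\mathcal R}^\varepsilon$ from Theorem \ref{philimit} and the $\theta$-regularity of the various Poisson-equation-derived quantities uniformly over $\bar\Theta$. This is precisely what Lemma \ref{contrastlimit} is designed to deliver, so by citing it the present argument reduces to the soft identifiability and well-separation reasoning above.
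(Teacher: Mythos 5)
Your proof is correct, and it rests on the same two pillars as the paper's: the uniform-in-$\theta$ convergence of $U^\varepsilon$ to $\tilde U$ supplied by Lemma \ref{contrastlimit}, and the identifiability of $\theta_0$ as the unique simultaneous zero of the $\tilde F_k$ via Condition \ref{id1} (your induction on $k$ showing that vanishing of all the $\tilde F_k$ forces $\bar X^{\theta}_{t_k}=\bar X^{\theta_0}_{t_k}$ is exactly the observation recorded in the paper's remark following Condition \ref{id1}). The only difference is how the final argmin step is executed: the paper verifies the modulus-of-continuity hypothesis, namely that $w^\varepsilon(\cdot,\phi)$ converges to $\tilde w^{\theta}(\phi)$ with $\tilde w^{\theta}(\phi)\to0$ as $\phi\to0$ (the latter via the Lipschitz bound of Lemma \ref{modulus}), and then invokes the general consistency criterion of Theorem 3.2.8 in \cite{dacunha}, whereas you carry out the well-separated-minimum argument by hand on the compact set $\bar\Theta\cap\{|\theta-\theta_0|\geq\eta\}$. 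The two routes are interchangeable here; note only that your Step 2 still requires continuity of $\theta\mapsto\tilde U(\theta;\{\bar X^{\theta_0}_{t_k}\}_{k=1}^n)$ on $\bar\Theta$ in order to extract $\rho(\eta)>0$, and that continuity is precisely what Lemma \ref{modulus} delivers --- so no ingredient is actually saved; your version simply makes the argmin mechanics explicit rather than delegating them to the cited theorem.
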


\noindent\textit{Proof.} Consider the modulus of continuity
\begin{align*}
w^\varepsilon(\{x_{t_k}\}_{k=1}^n,\phi):=\sup_{\theta_1,\theta_2\in\Theta; |\theta_1-\theta_2|\leq\phi}\left|U^\varepsilon(\theta_1; \{x_{t_k}\}_{k=1}^n)-U^\varepsilon(\theta_2; \{x_{t_k}\}_{k=1}^n)\right|.
\end{align*}
By Theorem 3.2.8 in \cite{dacunha}, it suffices to show that $w^\varepsilon(\{X^{\varepsilon,\theta}_{t_k}\}_{k=1}^n,\phi)$ converges in probability uniformly in $\phi$ to a function of $\phi$ that tends to $0$ as $\phi\to0$. In fact,
\begin{align*}
\lim_{\epsilon\to0}P\left(\sup_{\theta\in\Theta}\sup_{\phi>0}|w^\varepsilon(\{X^{\varepsilon,\theta}_{t_k}\}_{k=1}^n,\phi)-\tilde w^{\theta}(\phi)|>\eta\right)=0,
\end{align*}
where
\begin{align*}
\tilde w^\theta(\phi):=\sup_{\theta_1,\theta_2\in\Theta; |\theta_1-\theta_2|\leq\phi}\left|\tilde U(\theta_1; \{\bar X^{\theta}_{t_k}\}_{k=1}^n)-\tilde U(\theta_2; \{\bar X^{\theta}_{t_k}\}_{k=1}^n)\right|;
\end{align*}
this is immediate by Lemma \ref{contrastlimit} upon writing
\begin{align*}
&\Big(U^\varepsilon(\theta_1; \{X^{\varepsilon,\theta}_{t_k}\}_{k=1}^n)-U^\varepsilon(\theta_2; \{X^{\varepsilon,\theta}_{t_k}\}_{k=1}^n)\Big)-\Big(\tilde U(\theta_1; \{\bar X^{\theta}_{t_k}\}_{k=1}^n)-\tilde U(\theta_2; \{\bar X^{\theta}_{t_k}\}_{k=1}^n)\Big)\\
&\hspace{2pc}=
\Big(U^\varepsilon(\theta_1; \{X^{\varepsilon,\theta}_{t_k}\}_{k=1}^n)-\tilde U(\theta_1; \{\bar X^{\theta}_{t_k}\}_{k=1}^n)\Big)-\Big(U^\varepsilon(\theta_2; \{X^{\varepsilon,\theta}_{t_k}\}_{k=1}^n)-\tilde U(\theta_2; \{\bar X^{\theta}_{t_k}\}_{k=1}^n)\Big).
\end{align*}
That $\lim_{\phi\to0}\tilde w^\theta(\phi)=0$ is a corollary of Lemma \ref{modulus} in the Appendix, concluding the proof of the theorem.

\qed

Before proceeding to a central limit theorem for the MCE, we must assume an appropriate identifiability condition to guarantee that the variance is well behaved in the limit.

\begin{condition} (Identifiability Condition 2)\label{id2}
For any $\theta\in\Theta$, the weighted difference
\begin{align*}
Z^{\theta}(t_k,t_{k-1})\cdot\nabla_\theta\bar X^{\theta}_{t_{k-1}}-\nabla_\theta\bar X^{\theta}_{t_k}
\end{align*}
is nonzero for at least one value of $k=1, 2, $ \ldots $n$.
\end{condition}

\begin{theorem}\label{normality}(Asymptotic Normality of the MCE)
Assume Conditions \ref{basicconditions}, \ref{recurrencecondition}, \ref{ellcondition}, \ref{id1}, and \ref{id2}, and, in the $\infty$ regime, Condition \ref{centeringcondition}. For any given $\theta_0\in\Theta$,
$\frac1{\sqrt\epsilon}(\bar\theta^\varepsilon(\{X^{\varepsilon,\theta_0}_{t_k}\}^n_{k=1})-\theta_0)$ converges in distribution as $\epsilon\to0$ to the normal distribution $\mathcal N(0,M(\theta_0))$, where the covariance is given by the formula
\begin{align*}
M(\theta)&:=\left[\Sigma_{k=1}^n\bigg[\Big(Z^{\theta}(t_k,t_{k-1})\cdot\nabla_\theta\bar X^{\theta}_{t_{k-1}}-\nabla_\theta\bar X^{\theta}_{t_k}\Big)^T\cdot Q^{-1}_k(\theta)\cdot\Big(Z^{\theta}(t_k,t_{k-1})\cdot\nabla_\theta\bar X^{\theta}_{t_{k-1}}-\nabla_\theta\bar X^{\theta}_{t_k}\Big)\bigg]\right]^{-1}.\nonumber
\end{align*}
\end{theorem}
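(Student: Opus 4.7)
The plan is to follow the standard asymptotics-of-$M$-estimator route: localize using consistency, Taylor-expand the first-order condition, identify the probability limit of the Hessian and the distributional limit of the score, and combine via Slutsky's theorem. By Theorem \ref{consistency}, with probability tending to one $\bar\theta^\varepsilon$ lies in an open neighborhood of $\theta_0$ interior to $\Theta$, so that $\nabla_\theta U^\varepsilon(\bar\theta^\varepsilon;\{X^{\varepsilon,\theta_0}_{t_k}\}_{k=1}^n)=0$; a mean-value expansion then yields
\begin{equation*}
\frac{1}{\sqrt\epsilon}(\bar\theta^\varepsilon-\theta_0) = -\bigl[\nabla_\theta^2 U^\varepsilon(\theta^{*,\varepsilon})\bigr]^{-1}\cdot\frac{1}{\sqrt\epsilon}\nabla_\theta U^\varepsilon(\theta_0),
\end{equation*}
where $\theta^{*,\varepsilon}$ lies on the segment joining $\bar\theta^\varepsilon$ and $\theta_0$ and converges in probability to $\theta_0$.

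For the Hessian, Theorem \ref{xlimit} gives $X^{\varepsilon,\theta_0}_{t_k}-\bar X^{\theta_0}_{t_k}=O_P(\sqrt\epsilon)$, so that $F^\varepsilon_k(\theta_0;X^{\varepsilon,\theta_0})=O_P(\sqrt\epsilon)$ while $\nabla_\theta F^\varepsilon_k(\theta_0;X^{\varepsilon,\theta_0})\to D_k:=Z^{\theta_0}(t_k,t_{k-1})\nabla_\theta\bar X^{\theta_0}_{t_{k-1}}-\nabla_\theta\bar X^{\theta_0}_{t_k}$ in probability. Differentiating the quadratic form $F^T Q^{-1} F$ twice produces exactly one term of order $1$, namely $2(\nabla_\theta F^\varepsilon_k)^T Q_k^{-1}(\nabla_\theta F^\varepsilon_k)$, together with a collection of terms each carrying at least one factor of $F^\varepsilon_k$ and therefore vanishing in probability. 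Combined with continuity of the relevant objects in $\theta$, this gives
\begin{equation*}
\nabla_\theta^2 U^\varepsilon(\theta^{*,\varepsilon})\xrightarrow{P} A(\theta_0):=2\sum_{k=1}^{n} D_k^T Q_k^{-1}(\theta_0) D_k,
\end{equation*}
which is positive definite by Identifiability Condition \ref{id2} and hence invertible.

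For the score, the same expansion yields
\begin{equation*}
\frac{1}{\sqrt\epsilon}\nabla_\theta U^\varepsilon(\theta_0) = 2\sum_{k=1}^{n}(\nabla_\theta F^\varepsilon_k(\theta_0))^T Q_k^{-1}(\theta_0)\cdot\frac{F^\varepsilon_k(\theta_0)}{\sqrt\epsilon}+o_P(1).
\end{equation*}
Working in the $\infty$ regime (the $\gamma$ case is analogous), I would use the representation from Theorem \ref{philimit} together with the semigroup identity $Z^{\theta_0}(t_k,s)=Z^{\theta_0}(t_k,t_{k-1})Z^{\theta_0}(t_{k-1},s)$ for $s\leq t_{k-1}$ to telescope $\varphi^\varepsilon_{t_k}$ against $Z^{\theta_0}(t_k,t_{k-1})\varphi^\varepsilon_{t_{k-1}}$; only the integrals over $(t_{k-1},t_k)$ survive, and the deterministic drift they contribute matches exactly the correction $\sqrt\epsilon\int_{t_{k-1}}^{t_k}Z^{\theta_0}(t_k,s)\bar J^{\theta_0}(\bar X^{\theta_0}_s)\,ds$ subtracted in the definition of $F^\varepsilon_k$, leaving
\begin{align*}
\frac{F^\varepsilon_k(\theta_0)}{\sqrt\epsilon} &= \int_{t_{k-1}}^{t_k} Z^{\theta_0}(t_k,s)(\sigma+\nabla_y\chi\cdot\tau_1)(X^{\varepsilon}_s,Y^{\varepsilon}_s)\,dW_s \\
&\quad + \int_{t_{k-1}}^{t_k} Z^{\theta_0}(t_k,s)(\nabla_y\chi\cdot\tau_2)(X^{\varepsilon}_s,Y^{\varepsilon}_s)\,dB_s + o_P(1).
\end{align*}
Applying a martingale CLT for the vector of these $n$ stochastic integrals over disjoint intervals, in combination with ergodic averaging of their quadratic variation against $\mu_{\infty,\bar X_s}$ (Lemma \ref{ergodiclemma}), I would obtain joint convergence to independent centered Gaussians $(\xi_k)_{k=1}^n$ with $\operatorname{Cov}(\xi_k)=Q_k(\theta_0)$, whence
\begin{equation*}
\frac{1}{\sqrt\epsilon}\nabla_\theta U^\varepsilon(\theta_0)\xrightarrow{d} 2\sum_{k=1}^{n} D_k^T Q_k^{-1}(\theta_0)\xi_k \sim \mathcal{N}\bigl(0, B(\theta_0)\bigr), \quad B(\theta_0):=4\sum_{k=1}^{n} D_k^T Q_k^{-1}(\theta_0) D_k.
\end{equation*}

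Slutsky's theorem then gives $\frac{1}{\sqrt\epsilon}(\bar\theta^\varepsilon-\theta_0)\xrightarrow{d}\mathcal{N}(0, A^{-1} B A^{-1})$, and since $B=2A$ the sandwich collapses to $A^{-1} B A^{-1}=2A^{-1}=\bigl(\sum_k D_k^T Q_k^{-1}D_k\bigr)^{-1}=M(\theta_0)$, matching the stated formula. The main technical obstacle is the joint distributional convergence $\frac{1}{\sqrt\epsilon}F^\varepsilon_k(\theta_0)\to\xi_k$ with the precise covariance $Q_k(\theta_0)$: this requires propagating the uniform-in-$t$ probabilistic control of the remainder $\tilde{\mathcal R}^\varepsilon$ from Theorem \ref{philimit} at the grid times $t_k$, together with an ergodic averaging argument identifying the limiting quadratic variation of the driving martingales as $\bar q^{\theta_0}(\bar X^{\theta_0}_s)$, which is exactly what makes $Q_k(\theta_0)$ emerge.
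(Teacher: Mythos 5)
Your proposal is correct and follows essentially the same route as the paper's proof: a Taylor expansion of the first-order condition, identification of the Hessian limit $2\sum_k D_k^T Q_k^{-1}D_k$ with all terms carrying a factor of $F^\varepsilon_k$ vanishing, and convergence of $\{\epsilon^{-1/2}F^\varepsilon_k\}_{k=1}^n$ to independent centered Gaussians with covariance $Q_k(\theta_0)$ via the representation of Theorem \ref{philimit}, after which the sandwich collapses because $B=2A$. The telescoping via the semigroup property of $Z^{\theta_0}$ that you describe is exactly how the paper arrives at its displayed representation of $F^\varepsilon_k$ in terms of stochastic integrals over $(t_{k-1},t_k)$.
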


\noindent\textit{Proof.} Let us suppress the data $\{X^{\varepsilon,\theta_0}_{t_k}\}_{k=1}^n$. By Taylor's theorem,
\begin{align*}
0&=\frac1{\sqrt\epsilon}(\nabla_\theta U^\varepsilon)(\bar\theta^\varepsilon)=\frac1{\sqrt\epsilon}(\nabla_\theta U^\varepsilon)(\theta_0)+\frac1{\sqrt\epsilon}(\nabla^2_\theta U^\varepsilon)(\theta^{\varepsilon,\dagger})\cdot(\bar\theta^\varepsilon-\theta_0),
\end{align*}
where $\theta^{\varepsilon,\dagger}$ is an appropriately-chosen point on the segment connecting $\bar\theta^\varepsilon$ with $\theta_0$. Assuming the inverse exists, we may re-express this as
\begin{align}
\frac1{\sqrt\epsilon}(\bar\theta^\varepsilon-\theta_0)&=(\nabla^2_\theta U^\varepsilon)^{-1}(\theta^{\varepsilon,\dagger})\cdot\frac1{\sqrt\epsilon}(\nabla_\theta U^\varepsilon)(\theta_0).\label{reexpression}
\end{align}
Thus, it suffices to establish a limit in distribution of $\frac1{\sqrt\epsilon}(\nabla_\theta U^\varepsilon)(\theta_0)$ and an invertible limit in probability of $(\nabla^2_\theta U^\varepsilon)(\theta^{\varepsilon,\dagger})$; the interested reader is reffered to Section 3.3.4 in \cite{dacunha} for a rigorous justification of this now-classical approach.

For the weighted gradient of the contrast function, we have that
\begin{align}
\frac1{\sqrt\epsilon}(\nabla_\theta U^\varepsilon)(\theta_0)
&=\left[I+II+III+IV\right]\Big|_{\theta=\theta_0},\label{theabove}
\end{align}
where
\begin{align}
I&:=\frac2{\sqrt\epsilon}\Sigma_{k=1}^n\bigg[\Big(Z^{\theta}(t_k,t_{k-1})\cdot\nabla_\theta\bar X^{\theta}_{t_{k-1}}-\nabla_\theta\bar X^{\theta}_{t_k}\Big)^T\cdot Q^{-1}_k(\theta)\cdot F^\varepsilon_k(\theta)\bigg],\nonumber\\
II&:=-\frac2{\sqrt\epsilon}\Sigma_{k=1}^n \bigg[ \left(\nabla_\theta Z^{\theta}(t_k,t_{k-1})\cdot\left[X^{\varepsilon,\theta_0}_{t_{k-1}}-\bar X^\theta_{t_{k-1}}\right]\right)^T\cdot Q_k^{-1}(\theta) \cdot F^\varepsilon_k(\theta)\bigg],\nonumber\\
III&:=-2\Sigma_{k=1}^n\bigg[\Big(\nabla_\theta\int^{t_k}_{t_{k-1}}Z^{\theta}(t_k,s)\cdot\bar J^\theta(\bar X^\theta_s)ds\Big)^T\cdot Q^{-1}_k(\theta)\cdot F^\varepsilon_k(\theta)\bigg],\nonumber\\
IV&:=\frac1{\sqrt\epsilon}\Sigma_{k=1}^n \bigg[ (F^\varepsilon_k)^T(\theta) \cdot \nabla_\theta\left( Q_k^{-1}(\theta) \right) \cdot F^\varepsilon_k(\theta)\bigg].\nonumber
\end{align}
Note that $||\nabla_\theta Z^{\theta}(t_k,t_{k-1})||$ is uniformly bounded, that $\frac1{\sqrt\epsilon}E|X^{\varepsilon,\theta_0}_{t_{k-1}}-\bar X^{\theta_0}_{t_{k-1}}|$ is bounded for $\epsilon$ sufficiently small by Theorem \ref{xlimit}, that $||Q_k^{-1}(\theta)||$ is bounded by Lemma \ref{qlemma}, and that $E|F^\varepsilon_k(\theta)|$ vanishes at the rate of $\sqrt\epsilon$ by Lemma \ref{fbound}; putting these together, we deduce that $II\Big|_{\theta=\theta_0}$ vanishes in probability. $III\Big|_{\theta=\theta_0}$ and $IV\Big|_{\theta=\theta_0}$ similarly vanish in probability. Meanwhile, $I\Big|_{\theta=\theta_0}$ converges in distribution to $\mathcal N(0,\Gamma(\theta_0))$ with
\begin{align}
\Gamma(\theta)&=4\Sigma_{k=1}^n\bigg[\Big(Z^{\theta}(t_k,t_{k-1})\cdot\nabla_\theta\bar X^{\theta}_{t_{k-1}}-\nabla_\theta\bar X^{\theta}_{t_k}\Big)^T\cdot Q^{-1}_k(\theta)\cdot\Big(Z^{\theta}(t_k,t_{k-1})\cdot\nabla_\theta\bar X^{\theta}_{t_{k-1}}-\nabla_\theta\bar X^{\theta}_{t_k}\Big)\bigg].\label{whattosee}
\end{align}
To see this, let $\psi$ stand for $\chi$ in the $\infty$ regime and for $\Phi$ in the $\gamma$ regime. Recalling the definition (\ref{effdef}) of $F^\varepsilon_k$ we write, with $\tilde{\mathcal R}^\varepsilon$ as in Theorem \ref{philimit},
\begin{align}
&F^\varepsilon_k(\theta)=\sqrt\epsilon\int^{t_k}_{t_{k-1}}Z^{\theta}(t_k,s)(\sigma+\nabla_y\psi\cdot\tau_1)(X^{\varepsilon,\theta}_s,Y^{\epsilon,\theta}_s)dW_s\label{effref}\\
&\hspace{4pc}+\sqrt\epsilon\int^{t_k}_{t_{k-1}}Z^{\theta}(t_k,s)\nabla_y\psi\cdot\tau_2(X^{\varepsilon,\theta}_s,Y^{\epsilon,\theta}_s)dB_s+\sqrt\epsilon\left[\tilde{\mathcal R}^{\varepsilon,\theta}_{t_k}-Z^{\theta}(t_k,t_{k-1})\tilde{\mathcal R}^{\varepsilon,\theta}_{t_{k-1}}\right].\nonumber
\end{align}
Thus, we see that the sequence $\{\frac1{\sqrt\epsilon}F^\varepsilon_k(\theta)\}^n_{k=1}$ converges in distribution, as a sequence, to the sequence $\{\int^{t_k}_{t_{k-1}}Z^{\theta}(t_k,s)\bar q^{1/2}(\bar X^\theta_s)d(W,B)_s\}^n_{k=1}$ of independent Gaussian random variables, whence $I$ converges in distribution to
\begin{align}
2\Sigma_{k=1}^n \int^{t_k}_{t_{k-1}}\Big(Z^{\theta}(t_k,t_{k-1})\cdot\nabla_\theta\bar X^{\theta}_{t_{k-1}}-\nabla_\theta\bar X^{\theta}_{t_k}\Big)^T\cdot Q^{-1}_k(\theta)\cdot Z^{\theta}(t_k,s) \bar q^{1/2}(\bar X^\theta_s)d(W,B)_s,\nonumber
\end{align}
which is of course a centered Gaussian random variable with covariance matrix
\begin{align}
&4\Sigma_{k=1}^n \int^{t_k}_{t_{k-1}}\left[\Big(Z^{\theta}(t_k,t_{k-1})\cdot\nabla_\theta\bar X^{\theta}_{t_{k-1}}-\nabla_\theta\bar X^{\theta}_{t_k}\Big)^T\cdot Q^{-1}_k(\theta)\cdot Z^{\theta}(t_k,s) \bar q^{1/2}(\bar X^\theta_s)\right]\nonumber\\
&\hspace{4pc}\times\left[\Big(Z^{\theta}(t_k,t_{k-1})\cdot\nabla_\theta\bar X^{\theta}_{t_{k-1}}-\nabla_\theta\bar X^{\theta}_{t_k}\Big)^T\cdot Q^{-1}_k(\theta)\cdot Z^{\theta}(t_k,s) \bar q^{1/2}(\bar X^\theta_s)\right]^Tds\nonumber\\
&\hspace{2pc}=4\Sigma_{k=1}^n \left[\Big(Z^{\theta}(t_k,t_{k-1})\cdot\nabla_\theta\bar X^{\theta}_{t_{k-1}}-\nabla_\theta\bar X^{\theta}_{t_k}\Big)^T\cdot Q^{-1}_k(\theta)\right]\cdot Q_k(\theta)\nonumber\\
&\hspace{4pc}\times\left[\Big(Z^{\theta}(t_k,t_{k-1})\cdot\nabla_\theta\bar X^{\theta}_{t_{k-1}}-\nabla_\theta\bar X^{\theta}_{t_k}\Big)^T\cdot Q^{-1}_k(\theta)\right]^T,\nonumber
\end{align}
which is exactly $\Gamma(\theta)$. For the Hessian of the contrast function, we have, in the notation of (\ref{theabove}),
\begin{align}
(\nabla_\theta^2 U^\varepsilon)(\theta^{\varepsilon,\dagger})\label{hessianabove}
&=\sqrt\epsilon\Big[\nabla_\theta(I+II+III+IV)\Big]\bigg|_{\theta=\theta_0}.
\end{align}

The terms $\sqrt\epsilon\nabla_\theta II\Big|_{\theta=\theta_0}$, $\sqrt\epsilon\nabla_\theta III\Big|_{\theta=\theta_0}$, and $\sqrt\epsilon\nabla_\theta IV\Big|_{\theta=\theta_0}$ vanish in probability. Meanwhile, the first term may be rewritten as
\begin{align}
&\sqrt\epsilon\nabla_\theta I\Big|_{\theta=\theta_0}=2\Sigma_{k=1}^n\bigg[\Big(Z^{\theta}(t_k,t_{k-1})\cdot\nabla_\theta\bar X^{\theta}_{t_{k-1}}-\nabla_\theta\bar X^{\theta}_{t_k}\Big)^T\cdot Q^{-1}_k(\theta)\cdot\Big(Z^{\theta}(t_k,t_{k-1})\cdot\nabla_\theta\bar X^{\theta}_{t_{k-1}}-\nabla_\theta\bar X^{\theta}_{t_k}\Big)\bigg]\Bigg|_{\theta=\theta_0}\nonumber\\
&\hspace{2pc}-2\Sigma_{k=1}^n\bigg[\Big(Z^{\theta}(t_k,t_{k-1})\cdot\nabla_\theta\bar X^{\theta}_{t_{k-1}}-\nabla_\theta\bar X^{\theta}_{t_k}\Big)^T\cdot Q^{-1}_k(\theta)\cdot\left(\nabla_\theta Z^{\theta}(t_k,t_{k-1})\cdot\left[X^{\varepsilon,\theta_0}_{t_{k-1}}-\bar X^\theta_{t_{k-1}}\right]\right)\bigg]\Bigg|_{\theta=\theta_0}\nonumber\\
&\hspace{2pc}-\sqrt\epsilon\cdot 2\Sigma_{k=1}^n\bigg[\Big(Z^{\theta}(t_k,t_{k-1})\cdot\nabla_\theta\bar X^{\theta}_{t_{k-1}}-\nabla_\theta\bar X^{\theta}_{t_k}\Big)^T\cdot Q^{-1}_k(\theta)\cdot \nabla_\theta\int^{t_k}_{t_{k-1}}Z^{\theta}(t_k,s)\cdot\bar J^\theta(\bar X^\theta_s)ds\bigg]\Bigg|_{\theta=\theta_0}\nonumber\\
&\hspace{2pc}+2\Sigma_{k=1}^n\bigg[\Big(Z^{\theta}(t_k,t_{k-1})\cdot\nabla_\theta\bar X^{\theta}_{t_{k-1}}-\nabla_\theta\bar X^{\theta}_{t_k}\Big)^T\cdot \nabla_\theta\left(Q^{-1}_k(\theta)\right)\cdot F^\varepsilon_k(\theta)\bigg]\Bigg|_{\theta=\theta_0}\nonumber\\
&\hspace{2pc}+2\Sigma_{k=1}^n\bigg[\nabla_\theta\Big(Z^{\theta}(t_k,t_{k-1})\cdot\nabla_\theta\bar X^{\theta}_{t_{k-1}}-\nabla_\theta\bar X^{\theta}_{t_k}\Big)^T\cdot Q^{-1}_k(\theta)\cdot F^\varepsilon_k(\theta)\bigg]\Bigg|_{\theta=\theta_0}.\nonumber
\end{align}
All but the first summand vanish in probability by Theorem \ref{xlimit} and Lemmata \ref{fbound} and \ref{qlemma} in the Appendix; the limit in probability is therefore
\begin{align}
2\Sigma_{k=1}^n\bigg[\Big(Z^{\theta}(t_k,t_{k-1})\cdot\nabla_\theta\bar X^{\theta}_{t_{k-1}}-\nabla_\theta\bar X^{\theta}_{t_k}\Big)\cdot Q^{-1}_k(\theta)\cdot\Big(Z^{\theta}(t_k,t_{k-1})\cdot\nabla_\theta\bar X^{\theta}_{t_{k-1}}-\nabla_\theta\bar X^{\theta}_{t_k}\Big)\bigg]\Bigg|_{\theta=\theta_0},\nonumber
\end{align}
which is invertible by Lemma \ref{qlemma} and Condition \ref{id2}.

Recalling (\ref{reexpression}) and (\ref{whattosee}), we conclude that $\frac1{\sqrt\epsilon}(\bar\theta^\varepsilon(\{X^{\varepsilon,\theta_0}_{t_k}\}_{k=1}^n)-\theta_0)$ converges in distribution as $\epsilon\to0$ to $\mathcal N(0,M(\theta_0))$, with
\begin{align}
M(\theta)&=\left[\Sigma_{k=1}^n\bigg[\Big(Z^{\theta}(t_k,t_{k-1})\cdot\nabla_\theta\bar X^{\theta}_{t_{k-1}}-\nabla_\theta\bar X^{\theta}_{t_k}\Big)^T\cdot Q^{-1}_k(\theta)\cdot\Big(Z^{\theta}(t_k,t_{k-1})\cdot\nabla_\theta\bar X^{\theta}_{t_{k-1}}-\nabla_\theta\bar X^{\theta}_{t_k}\Big)\bigg]\right]^{-1},\nonumber
\end{align}
concluding the proof of the theorem.
\qed

\vspace{1pc}
We conclude this section by computing the limit $\lim_{n\to\infty}M(\theta; n)$, where the notation $M(\theta; n)=M(\theta)$ makes explicit the dependence on the number of data points $n$. A calculation shows that, in light of the ergodic theorems in the Appendix, the Fisher information established in Theorem 3.3.1 in \cite{KutoyantsParameter} leads to the asymptotic Fisher information
\begin{align}
\int^T_0\left(\nabla_\theta\bar\lambda^\theta\right)^T(\bar X^\theta_s)\cdot(\bar q^\theta)^{-1}(\bar X^\theta_s)\cdot\left(\nabla_\theta\bar\lambda^\theta\right)(\bar X^\theta_s)ds\nonumber
\end{align}
for the continuous-data estimation problem. Thus, Lemma \ref{L:FisherInformation} shows in particular that $M(\theta; n)$ attains, in the limit as $n\to\infty$, the Cram\'er-Rao bound for the continuous-data estimation problem, which is to say that the estimator is asymptotically statistically efficient as first $\epsilon+\delta\to0$ and then $n\to\infty$.

\begin{lemma} \label{L:FisherInformation}
Assume Conditions \ref{basicconditions}, \ref{recurrencecondition}, \ref{ellcondition}, \ref{id1}, and \ref{id2}, and, in the $\infty$ regime, Condition \ref{centeringcondition}. For any given $\theta\in\Theta$,
\begin{align}
\lim_{n\to\infty}M(\theta; n)=\left[\int^T_0\left(\nabla_\theta\bar\lambda^\theta\right)^T(\bar X^\theta_s)\cdot(\bar q^\theta)^{-1}(\bar X^\theta_s)\cdot\left(\nabla_\theta\bar\lambda^\theta\right)(\bar X^\theta_s)ds\right]^{-1},\nonumber
\end{align}
where $M(\theta; n)$ is the limiting variance with $n$ samples as per Theorem \ref{normality}.
\end{lemma}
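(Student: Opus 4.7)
\noindent\textit{Proof proposal for Lemma \ref{L:FisherInformation}.} The plan is to show that each summand appearing in $M(\theta;n)^{-1}$ is, to leading order in $\Delta = T/n$, of the form $\Delta \cdot (\nabla_\theta\bar\lambda^\theta)^T(\bar X^\theta_{t_{k-1}})(\bar q^\theta)^{-1}(\bar X^\theta_{t_{k-1}})(\nabla_\theta\bar\lambda^\theta)(\bar X^\theta_{t_{k-1}})$, so that summing over $k=1,\dots,n$ yields a Riemann sum converging to the Fisher information integral; inverting then gives the stated limit. All quantities are deterministic and continuous in $s$ on $[0,T]$, so convergence of the Riemann sums is guaranteed once the pointwise expansions are established with uniform remainder bounds.

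The first step is to Taylor-expand the three building blocks in $\Delta$. From the matrix ODE (\ref{Eq:MatrixODE}),
\begin{align*}
Z^\theta(t_k,t_{k-1}) = 1_m + \Delta\,(\nabla_x\bar\lambda^\theta)(\bar X^\theta_{t_{k-1}}) + O(\Delta^2),
\end{align*}
uniformly in $k$. Differentiating the ODE $\bar X^\theta_t = x_0 + \int_0^t \bar\lambda^\theta(\bar X^\theta_s)\,ds$ with respect to $\theta$ gives
\begin{align*}
\frac{d}{dt}\nabla_\theta\bar X^\theta_t = (\nabla_\theta\bar\lambda^\theta)(\bar X^\theta_t) + (\nabla_x\bar\lambda^\theta)(\bar X^\theta_t)\cdot\nabla_\theta\bar X^\theta_t,
\end{align*}
so that
\begin{align*}
\nabla_\theta\bar X^\theta_{t_k} = \nabla_\theta\bar X^\theta_{t_{k-1}} + \Delta\bigl[(\nabla_\theta\bar\lambda^\theta)(\bar X^\theta_{t_{k-1}}) + (\nabla_x\bar\lambda^\theta)(\bar X^\theta_{t_{k-1}})\cdot\nabla_\theta\bar X^\theta_{t_{k-1}}\bigr] + O(\Delta^2).
\end{align*}
Subtracting yields the crucial cancellation
\begin{align*}
Z^\theta(t_k,t_{k-1})\cdot\nabla_\theta\bar X^\theta_{t_{k-1}} - \nabla_\theta\bar X^\theta_{t_k} = -\Delta\,(\nabla_\theta\bar\lambda^\theta)(\bar X^\theta_{t_{k-1}}) + O(\Delta^2).
\end{align*}
Similarly, the definition of $Q_k(\theta)$ combined with continuity of $s\mapsto\bar q^\theta(\bar X^\theta_s)$ gives $Q_k(\theta) = \Delta\,\bar q^\theta(\bar X^\theta_{t_{k-1}}) + O(\Delta^2)$, and hence by Lemma \ref{qlemma} the expansion $Q_k^{-1}(\theta) = \Delta^{-1}(\bar q^\theta)^{-1}(\bar X^\theta_{t_{k-1}}) + O(1)$.

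The second step is to substitute these expansions into the expression for $M(\theta;n)^{-1}$. Each summand becomes
\begin{align*}
\Delta\cdot(\nabla_\theta\bar\lambda^\theta)^T(\bar X^\theta_{t_{k-1}})\cdot(\bar q^\theta)^{-1}(\bar X^\theta_{t_{k-1}})\cdot(\nabla_\theta\bar\lambda^\theta)(\bar X^\theta_{t_{k-1}}) + O(\Delta^2),
\end{align*}
with the error $O(\Delta^2)$ uniform in $k$ because $\bar X^\theta_\cdot$ lives in a compact subset of $\mathcal X$ (continuous image of $[0,T]$) on which all relevant coefficients and their derivatives are uniformly bounded (using the smoothness in Condition \ref{basicconditions} and Lemma \ref{qlemma}). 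Summing over $k=1,\dots,n$, the leading terms form a Riemann sum for
\begin{align*}
\int_0^T (\nabla_\theta\bar\lambda^\theta)^T(\bar X^\theta_s)\cdot(\bar q^\theta)^{-1}(\bar X^\theta_s)\cdot(\nabla_\theta\bar\lambda^\theta)(\bar X^\theta_s)\,ds,
\end{align*}
while the accumulated error is of order $n\cdot O(\Delta^2) = O(\Delta) \to 0$. The third and final step is to invoke Condition \ref{id2} together with the asymptotic positivity of the limit (which is the Fisher information and hence positive definite) to conclude that inversion is continuous on a neighborhood of the limit, yielding $\lim_{n\to\infty} M(\theta;n) = \bigl[\int_0^T (\nabla_\theta\bar\lambda^\theta)^T(\bar q^\theta)^{-1}(\nabla_\theta\bar\lambda^\theta)(\bar X^\theta_s)\,ds\bigr]^{-1}$.

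The main technical point will be tracking the remainder terms in the Taylor expansions to confirm that they are uniformly $O(\Delta^2)$ in $k$; everything else is a routine Riemann-sum argument. I expect no serious obstruction since the only ingredients needed are $C^1$-smoothness of the coefficients along $\bar X^\theta_\cdot$ and the uniform invertibility of $Q_k$ supplied by Lemma \ref{qlemma}.
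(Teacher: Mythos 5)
Your proposal is correct and follows essentially the same route as the paper's proof: expanding $Z^\theta(t_k,t_{k-1})$, $\nabla_\theta\bar X^\theta_{t_k}$, and $Q_k(\theta)$ to first order in $\Delta$ to expose the cancellation $Z^\theta(t_k,t_{k-1})\cdot\nabla_\theta\bar X^\theta_{t_{k-1}}-\nabla_\theta\bar X^\theta_{t_k}=-\Delta\,(\nabla_\theta\bar\lambda^\theta)(\bar X^\theta_{t_{k-1}})+O(\Delta^2)$, recognizing $M^{-1}(\theta;n)$ as a Riemann sum for the Fisher information integral, and passing to the inverse. The only cosmetic difference is that the paper organizes the cancellation as an explicit three-term decomposition rather than via termwise Taylor expansions, but the content is identical.
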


\noindent\textit{Proof.} It is equivalent to show convergence of the inverses; that is, to show that
\begin{align}
\lim_{n\to\infty}M^{-1}(\theta; n)=\int^T_0\left(\nabla_\theta\bar\lambda^\theta\right)^T(\bar X^\theta_s)\cdot(\bar q^\theta)^{-1}(\bar X^\theta_s)\cdot\left(\nabla_\theta\bar\lambda^\theta\right)(\bar X^\theta_s)ds.\nonumber
\end{align}

Notice that
\begin{align}
Z^{\theta}(t_k,t_{k-1})\cdot\nabla_\theta\bar X^{\theta}_{t_{k-1}}-\nabla_\theta\bar X^{\theta}_{t_k}&=\left(Z^{\theta}(t_k,t_{k-1})-1-\int^{t_k}_{t_{k-1}}(\nabla_x\bar\lambda^{\theta})(\bar X^{\theta}_u)du\right)\cdot\nabla_\theta\bar X^{\theta}_{t_{k-1}}\label{decomposition}\\
&\hspace{2pc}+\int^{t_k}_{t_{k-1}}(\nabla_x\bar\lambda^{\theta})(\bar X^{\theta}_u)du\cdot\nabla_\theta\bar X^{\theta}_{t_{k-1}}-\nabla_\theta\left(\bar X^{\theta}_{t_{k}}-\bar X^{\theta}_{t_{k-1}}\right)\nonumber\\
&=I+II+III,\nonumber
\end{align}
where, with $1_{m}$ being the $m\times m$ identity matrix,
\begin{align}
I&:=\left(Z^{\theta}(t_k,t_{k-1})-1_{m}-\int^{t_k}_{t_{k-1}}(\nabla_x\bar\lambda^{\theta})(\bar X^{\theta}_u)du\right)\cdot\nabla_\theta\bar X^{\theta}_{t_{k-1}},\nonumber\\
II&:=\int^{t_k}_{t_{k-1}}(\nabla_x\bar\lambda^{\theta})(\bar X^{\theta}_u)\cdot\nabla_\theta\left(\bar X^{\theta}_{t_{k-1}}-\bar X^{\theta}_{u}\right)du,\nonumber\\
III&:=-\int^{t_k}_{t_{k-1}}(\nabla_\theta\bar\lambda^\theta)(\bar X^\theta_u)du.\nonumber
\end{align}
By Taylor's theorem, there is a constant $K$ such that $|I|\leq K\cdot\Delta^2$; likewise, there is a constant $K$ such that $|II|\leq K\cdot\Delta^2$. Meanwhile, the exponential terms in the definition of $Q$ converge uniformly to the identity. Finally, $III:=-\int^{t_k}_{t_{k-1}}(\nabla_\theta\bar\lambda^\theta)(\bar X^\theta_u)du$ behaves like $-\Delta\cdot(\nabla_\theta\bar\lambda^\theta)(\bar X^\theta_{t_{k-1}})$, whence we conclude that $M^{-1}(\theta; n)$ behaves asymptotically as the Riemann sum
\[
\Delta\cdot\Sigma_{k=1}^n\left[(\nabla_\theta\bar\lambda^\theta)^T\cdot (\bar q^\theta)^{-1}\cdot(\nabla_\theta\bar\lambda^\theta)\right](\bar X^\theta_{t_{k-1}}),
 \]
whose limit is of course $\int^T_0\left[\left(\nabla_\theta\bar\lambda^\theta\right)^T\cdot(\bar q^\theta)^{-1}\cdot\left(\nabla_\theta\bar\lambda^\theta\right)\right](\bar X^\theta_s)ds$, concluding the proof of the lemma.

\qed

\section{A Simplified Estimator}\label{S:simplified}
In this section we show that consistency and asymptotic normality are still achieved, even if one omits the covariance weights $Q^{-1}_k$ in the contrast function. On the one hand, one incurs by this omission an increase in the limiting variance, and asymptotic efficiency is lost (see Lemma \ref{L:ComparisonEstimators}). On the other hand, the simplification affords certain advantages - for instance, one may compute the \textit{simplified} contrast even if $\sigma$ is unknown. The simplified estimation procedure also presents enhanced robustness in numerical simulations (see Section \ref{S:numericalsection}), showing less sensitivity to the values of $\epsilon$ and $\delta$, which are commonly unknown in practice.

With the omission of the weights, one essentially imposes the additional simplification of constant diffusion on the misspecified model that gave rise to the first contrast estimator (\ref{thetabar}). Accordingly, the proofs of the results presented in this section follow near verbatim the arguments of Section \ref{S:InferenceFixedNumberObs}, and will not be presented separately in this section. The identifiability conditions remain unchanged. We also substitute from the outset the $\tilde F_k$ of equation (\ref{Eq:ReducedF}) for the $F^\varepsilon_k$ of equation (\ref{effdef}); it is not hard to see that the integral terms lost in doing so are in any event asymptotically negligible.

Let us therefore define the simplified minimum contrast estimator (SMCE)
\begin{align}
\tilde\theta(\{x_{t_k}\}_{k=1}^n):=\text{argmin}_{\theta\in\Theta}\Sigma_{k=1}^n |\tilde F_k(\theta; \{x_{t_k}\}_{k=1}^n)|^2.\label{thetatilde}
\end{align}

\begin{theorem}\label{simpleconsistency}(Consistency of the SMCE) Assume Conditions \ref{basicconditions}, \ref{recurrencecondition}, \ref{ellcondition}, \ref{id1}, and \ref{id2}, and, in the $\infty$ regime, Condition \ref{centeringcondition}. For any $\theta_0\in\Theta$ and $\eta>0$,
\begin{align*}
\lim_{\epsilon\to0}P\left(|\tilde\theta^\varepsilon(\{X^{\varepsilon,\theta_0}_{t_k}\}_{k=1}^n)-\theta_0|>\eta\right)=0.
\end{align*}
\end{theorem}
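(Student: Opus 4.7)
The plan is to mirror the proof of Theorem \ref{consistency} almost verbatim, exploiting the fact that the simplified contrast function
\[
\tilde U^\varepsilon(\theta; \{x_{t_k}\}_{k=1}^n):=\Sigma_{k=1}^n |\tilde F_k(\theta; \{x_{t_k}\}_{k=1}^n)|^2
\]
is structurally simpler than $U^\varepsilon$ (no $Q_k^{-1}$ weights, and $\tilde F_k$ in place of $F_k^\varepsilon$ drops the $\sqrt\epsilon$-order drift integral). I would first argue that, upon substitution of $\{X^{\varepsilon,\theta_0}_{t_k}\}_{k=1}^n$ for the data, $\tilde U^\varepsilon$ converges in probability uniformly in $\theta$ to a deterministic limit obtained by substitution of the first-order limit $\{\bar X^{\theta_0}_{t_k}\}_{k=1}^n$. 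Concretely, the candidate limit is
\[
\tilde U^0(\theta;\theta_0):=\Sigma_{k=1}^n\bigl|[\bar X^{\theta_0}_{t_k}-\bar X^\theta_{t_k}]-Z^{\theta}(t_k,t_{k-1})[\bar X^{\theta_0}_{t_{k-1}}-\bar X^\theta_{t_{k-1}}]\bigr|^2.
\]
The uniform convergence in $\theta$ follows from Theorem \ref{xlimit}, continuity of $\theta\mapsto Z^{\theta}(t_k,t_{k-1})$ and $\theta\mapsto\bar X^\theta_{t_k}$, and continuity of the ingredients in $x$; the argument is a (much simpler) version of the proof of the cited Lemma \ref{contrastlimit}, since I no longer need to handle the It\^o-type remainder coming from the drift correction $\sqrt\epsilon\int Z^\theta\bar J^\theta\,ds$.

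Second, I would invoke Identifiability Condition \ref{id1}: under that condition, $\bar X^\theta_{t_k}=\bar X^{\theta_0}_{t_k}$ for all $k$ iff $\theta=\theta_0$, so each summand of $\tilde U^0(\theta;\theta_0)$ vanishes iff $\theta=\theta_0$, giving that $\tilde U^0(\,\cdot\,;\theta_0)$ is uniquely minimized at $\theta_0$ (the $k=1$ term alone suffices since $\bar X^{\theta}_{0}=\bar X^{\theta_0}_{0}=x_0$, so that term equals $|\bar X^{\theta_0}_{t_1}-\bar X^\theta_{t_1}|^2$, which vanishes only when $\bar X^\theta_{t_1}=\bar X^{\theta_0}_{t_1}$; the higher-$k$ terms then pin down the rest). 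Having the unique minimum and the uniform convergence, I would apply Theorem 3.2.8 of \cite{dacunha} in exactly the same way the authors did for the MCE, by controlling the modulus of continuity
\[
\tilde w^\varepsilon(\{x_{t_k}\},\phi):=\sup_{|\theta_1-\theta_2|\leq\phi}|\tilde U^\varepsilon(\theta_1;\{x_{t_k}\})-\tilde U^\varepsilon(\theta_2;\{x_{t_k}\})|.
\]
By the telescoping trick used in the MCE proof, $\tilde w^\varepsilon$ converges in probability uniformly in $\phi$ to a deterministic $\tilde w^{\theta_0}(\phi)$ which vanishes with $\phi$ by continuity of the limiting contrast (Lemma \ref{modulus} in the Appendix, applied to the simpler integrand).

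The main obstacle I anticipate is purely bookkeeping: one must verify that the uniform-in-$\theta$ convergence statement really does carry through when no Lipschitz/boundedness is assumed on $\bar q^{-1}$, but this only helps — the weights were the delicate piece in Section \ref{S:InferenceFixedNumberObs}, and removing them is a strict simplification. The one subtle point is confirming that the integral drift term $\sqrt\epsilon\int_{t_{k-1}}^{t_k}Z^\theta(t_k,s)\bar J^\theta(\bar X^\theta_s)ds$, which distinguishes $F_k^\varepsilon$ from $\tilde F_k$, is indeed asymptotically negligible inside the simplified contrast; this is the remark the authors flag and is immediate because that term is $O(\sqrt\epsilon)$ uniformly in $\theta$, so replacing $F_k^\varepsilon$ by $\tilde F_k$ changes $\tilde U^\varepsilon$ by an $O(\sqrt\epsilon)$ quantity (plus cross terms controlled by Lemma \ref{fbound}), and hence does not affect the limit or its modulus of continuity.
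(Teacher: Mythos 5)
Your proposal is correct and follows essentially the same route as the paper, which itself gives no separate proof but simply states that the arguments of Section \ref{S:InferenceFixedNumberObs} (Lemma \ref{contrastlimit}, Condition \ref{id1}, Theorem 3.2.8 of \cite{dacunha}, and Lemma \ref{modulus}) carry over verbatim once $\tilde F_k$ replaces $F^\varepsilon_k$ and the $Q_k^{-1}$ weights are dropped. Your observation that the omitted $\sqrt\epsilon$-order drift integral is asymptotically negligible is exactly the point the authors flag, so nothing further is needed.
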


\begin{theorem}\label{simplenormality}(Asymptotic Normality of the SMCE)
Assume Conditions \ref{basicconditions}, \ref{recurrencecondition}, \ref{ellcondition}, \ref{id1}, and \ref{id2}, and, in the $\infty$ regime, Condition \ref{centeringcondition}. For any given $\theta_0\in\Theta$,
$\frac1{\sqrt\epsilon}(\tilde\theta^\varepsilon(\{X^{\varepsilon,\theta_0}_{t_k}\}^n_{k=1})-\theta_0)$ converges in distribution as $\epsilon\to0$ to the normal distribution $\mathcal N(0,\tilde M(\theta_0))$, where the covariance is given by the formula

\begin{align*}
\tilde M(\theta)&:=\Psi^{-1}(\theta)\cdot\Xi(\theta)\cdot\Psi^{-1}(\theta),
\end{align*}
where
\begin{align*}
\Psi(\theta)&:=\Sigma_{k=1}^n\bigg[\Big(Z^{\theta}(t_k,t_{k-1})\cdot\nabla_\theta\bar X^{\theta}_{t_{k-1}}-\nabla_\theta\bar X^{\theta}_{t_k}\Big)^T\cdot\Big(Z^{\theta}(t_k,t_{k-1})\cdot\nabla_\theta\bar X^{\theta}_{t_{k-1}}-\nabla_\theta\bar X^{\theta}_{t_k}\Big)\bigg],\\
\Xi(\theta)&:=\Sigma_{k=1}^n\bigg[\Big(Z^{\theta}(t_k,t_{k-1})\cdot\nabla_\theta\bar X^{\theta}_{t_{k-1}}-\nabla_\theta\bar X^{\theta}_{t_k}\Big)^T\cdot Q_k(\theta)\cdot\Big(Z^{\theta}(t_k,t_{k-1})\cdot\nabla_\theta\bar X^{\theta}_{t_{k-1}}-\nabla_\theta\bar X^{\theta}_{t_k}\Big)\bigg].
\end{align*}
\end{theorem}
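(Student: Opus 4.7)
The plan is to mirror the proof of Theorem \ref{normality}, systematically replacing each weight matrix $Q_k^{-1}(\theta)$ by the identity. Writing the simplified contrast as $\tilde U^\varepsilon(\theta):=\sum_{k=1}^n|\tilde F_k(\theta)|^2$ and applying Taylor's theorem to the first-order condition $(\nabla_\theta\tilde U^\varepsilon)(\tilde\theta^\varepsilon)=0$ about $\theta_0$, one obtains, once invertibility of the Hessian along the way has been checked,
\[
\tfrac{1}{\sqrt\epsilon}(\tilde\theta^\varepsilon-\theta_0)=(\nabla^2_\theta\tilde U^\varepsilon)^{-1}(\theta^{\varepsilon,\dagger})\cdot\tfrac{1}{\sqrt\epsilon}(\nabla_\theta\tilde U^\varepsilon)(\theta_0)
\]
for some $\theta^{\varepsilon,\dagger}$ on the segment joining $\tilde\theta^\varepsilon$ with $\theta_0$. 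By Theorem \ref{simpleconsistency} one has $\theta^{\varepsilon,\dagger}\to\theta_0$ in probability, so Slutsky's theorem reduces the problem to (i) a distributional limit for $\tfrac{1}{\sqrt\epsilon}(\nabla_\theta\tilde U^\varepsilon)(\theta_0)$ and (ii) an invertible limit in probability for $(\nabla^2_\theta\tilde U^\varepsilon)$ at $\theta_0$.

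For (i), set $A_k(\theta):=Z^\theta(t_k,t_{k-1})\nabla_\theta\bar X^\theta_{t_{k-1}}-\nabla_\theta\bar X^\theta_{t_k}$ and differentiate $\tilde F_k$ from \eqref{Eq:ReducedF} to get
\[
\nabla_\theta\tilde F_k(\theta_0)=A_k(\theta_0)-\nabla_\theta Z^\theta(t_k,t_{k-1})\big|_{\theta=\theta_0}\cdot\bigl[X^{\varepsilon,\theta_0}_{t_{k-1}}-\bar X^{\theta_0}_{t_{k-1}}\bigr],
\]
so
\[
\tfrac{1}{\sqrt\epsilon}(\nabla_\theta\tilde U^\varepsilon)(\theta_0)=\tfrac{2}{\sqrt\epsilon}\sum_{k=1}^nA_k^T(\theta_0)\tilde F_k(\theta_0)+\text{(error)},
\]
where by Theorem \ref{xlimit} and the analogue of Lemma \ref{fbound} for $\tilde F_k$ (both the displacement and $\tilde F_k$ itself are $O_{\mathbb P}(\sqrt\epsilon)$) the error is $O_{\mathbb P}(\sqrt\epsilon)$ and vanishes. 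Invoking the path-space representation of Theorem \ref{philimit} together with the semigroup property $Z^{\theta_0}(t_k,s)=Z^{\theta_0}(t_k,t_{k-1})Z^{\theta_0}(t_{k-1},s)$ for $s\leq t_{k-1}$, the contributions from $[0,t_{k-1}]$ cancel in $\tfrac{1}{\sqrt\epsilon}\tilde F_k(\theta_0)$, leaving stochastic integrals over $[t_{k-1},t_k]$ plus the deterministic integral pieces declared asymptotically negligible in the paragraph preceding \eqref{thetatilde}. Consequently $\{\tfrac{1}{\sqrt\epsilon}\tilde F_k(\theta_0)\}_{k=1}^n$ converges jointly in distribution to independent centered Gaussians $G_k\sim\mathcal N(0,Q_k(\theta_0))$, and
\[
\tfrac{1}{\sqrt\epsilon}(\nabla_\theta\tilde U^\varepsilon)(\theta_0)\Rightarrow 2\sum_{k=1}^nA_k^T(\theta_0)G_k\sim\mathcal N\bigl(0,4\Xi(\theta_0)\bigr).
\]

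For (ii), differentiate once more to obtain
\[
(\nabla^2_\theta\tilde U^\varepsilon)(\theta)=2\sum_{k=1}^n\bigl[(\nabla_\theta\tilde F_k)^T(\nabla_\theta\tilde F_k)+(\nabla^2_\theta\tilde F_k)^T\tilde F_k\bigr](\theta).
\]
At $\theta^{\varepsilon,\dagger}\to\theta_0$ the second summand vanishes in probability (because $\tilde F_k=O_{\mathbb P}(\sqrt\epsilon)$), and the first converges in probability to $2\sum_{k=1}^nA_k^T(\theta_0)A_k(\theta_0)=2\Psi(\theta_0)$, invertible by Condition \ref{id2}. The continuous-mapping theorem then yields
\[
\tfrac{1}{\sqrt\epsilon}(\tilde\theta^\varepsilon-\theta_0)\Rightarrow(2\Psi(\theta_0))^{-1}\mathcal N(0,4\Xi(\theta_0))=\mathcal N\bigl(0,\Psi^{-1}(\theta_0)\Xi(\theta_0)\Psi^{-1}(\theta_0)\bigr)=\mathcal N\bigl(0,\tilde M(\theta_0)\bigr),
\]
as claimed. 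The Hessian analysis is essentially cosmetic relative to Theorem \ref{normality}; the main technical point is (i), and specifically the verification that removing the drift correction $\sqrt\epsilon\int_{t_{k-1}}^{t_k}Z^{\theta_0}(t_k,s)\bar J^{\theta_0}(\bar X^{\theta_0}_s)\,ds$ from $F^\varepsilon_k$ does not introduce an $O(1)$ shift in the limit of $\tfrac{1}{\sqrt\epsilon}(\nabla_\theta\tilde U^\varepsilon)(\theta_0)$, i.e., that after pairing with $A_k^T(\theta_0)$ and summing in $k$, the deterministic integral contributions recombine asymptotically to a negligible term, so that the gradient limit is centered. This is the single place where the simplified argument materially departs from that of the MCE.
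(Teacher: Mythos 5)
Your proposal is correct and takes essentially the same route as the paper: the paper offers no separate argument for Theorem \ref{simplenormality}, stating only that the proof follows near verbatim that of Theorem \ref{normality} with the weights $Q_k^{-1}$ replaced by the identity and $\tilde F_k$ substituted for $F^\varepsilon_k$, which is exactly the substitution you carry out, arriving at the sandwich limit $(2\Psi(\theta_0))^{-1}\cdot\mathcal N(0,4\,\Xi(\theta_0))=\mathcal N(0,\tilde M(\theta_0))$. The one delicate point you isolate --- that dropping the drift correction $\sqrt\epsilon\int_{t_{k-1}}^{t_k}Z^{\theta_0}(t_k,s)\bar J^{\theta_0}(\bar X^{\theta_0}_s)\,ds$ must not de-center the Gaussian limit of $\tfrac1{\sqrt\epsilon}\tilde F_k(\theta_0)$ --- is precisely the point the paper itself dispatches with the one-line assertion that the integral terms lost in passing from $F^\varepsilon_k$ to $\tilde F_k$ are ``in any event asymptotically negligible,'' so your treatment is no less complete than the paper's.
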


\begin{lemma} \label{L:SimpleFisherInformation}
Assume Conditions \ref{basicconditions}, \ref{recurrencecondition}, \ref{ellcondition}, \ref{id1}, and \ref{id2}, and, in the $\infty$ regime, Condition \ref{centeringcondition}. For any given $\theta\in\Theta$,
\begin{align*}
\lim_{n\to\infty}\tilde M^{-1}(\theta; n)&=\check\Psi(\theta)\cdot\check\Xi^{-1}(\theta)\cdot\check\Psi(\theta),
\end{align*}
where
\begin{align*}
\check\Psi(\theta)&:=\int^T_0\left[\left(\nabla_\theta\bar\lambda^\theta\right)^T\cdot \left(\nabla_\theta\bar\lambda^\theta\right)\right](\bar X^\theta_s)ds,\\
\check\Xi(\theta)&:=\int^T_0\left[\left(\nabla_\theta\bar\lambda^\theta\right)^T\cdot\bar q^\theta\cdot\left(\nabla_\theta\bar\lambda^\theta\right)\right](\bar X^\theta_s)ds.
\end{align*}
\end{lemma}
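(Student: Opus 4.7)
The plan is to mimic the proof of Lemma \ref{L:FisherInformation} closely, exploiting the identity $\tilde M^{-1}(\theta; n)=\Psi(\theta; n)\cdot\Xi^{-1}(\theta; n)\cdot\Psi(\theta; n)$ together with the observation that $\Psi$ and $\Xi$ are Riemann sums whose summands admit clean Taylor expansions in the sampling interval $\Delta:=T/n$. The strategy will be to show separately that $\Psi(\theta; n)/\Delta\to\check\Psi(\theta)$ and $\Xi(\theta; n)/\Delta^{2}\to\check\Xi(\theta)$; the combined scaling $\Delta\cdot\Delta\cdot\Delta^{-2}=1$ then cancels, yielding the stated limit.

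The two key per-summand expansions have in essence already been established in the proof of Lemma \ref{L:FisherInformation}. First, from the decomposition (\ref{decomposition}) and the accompanying $O(\Delta^{2})$ estimates on $I$ and $II$ there, one has
\begin{align*}
Z^{\theta}(t_{k},t_{k-1})\cdot\nabla_{\theta}\bar X^{\theta}_{t_{k-1}}-\nabla_{\theta}\bar X^{\theta}_{t_{k}}=-\Delta\cdot(\nabla_{\theta}\bar\lambda^{\theta})(\bar X^{\theta}_{t_{k-1}})+O(\Delta^{2}).
\end{align*}
Second, since $Z^{\theta}(t_{k},s)\to 1_{m}$ uniformly as $t_{k}-s\to 0$ and $s\mapsto\bar q^{\theta}(\bar X^{\theta}_{s})$ is continuous, the definition of $Q_{k}$ yields
\begin{align*}
Q_{k}(\theta)=\int_{t_{k-1}}^{t_{k}}Z^{\theta}(t_{k},s)\cdot\bar q^{\theta}(\bar X^{\theta}_{s})\cdot (Z^{\theta})^{T}(t_{k},s)ds=\Delta\cdot\bar q^{\theta}(\bar X^{\theta}_{t_{k-1}})+O(\Delta^{2}).
\end{align*}
Substituting the first expansion into the definition of $\Psi$ gives a summand equal to $\Delta^{2}\cdot[(\nabla_{\theta}\bar\lambda^{\theta})^{T}(\nabla_{\theta}\bar\lambda^{\theta})](\bar X^{\theta}_{t_{k-1}})+O(\Delta^{3})$, so that $\Psi(\theta; n)/\Delta$ is the Riemann sum converging to $\check\Psi(\theta)$. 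Substituting both expansions into $\Xi$ produces a summand equal to $\Delta^{3}\cdot[(\nabla_{\theta}\bar\lambda^{\theta})^{T}\cdot\bar q^{\theta}\cdot(\nabla_{\theta}\bar\lambda^{\theta})](\bar X^{\theta}_{t_{k-1}})+O(\Delta^{4})$, so that $\Xi(\theta; n)/\Delta^{2}$ is the Riemann sum converging to $\check\Xi(\theta)$.

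Combining the two displays and using continuity of matrix inversion at a positive-definite point,
\begin{align*}
\tilde M^{-1}(\theta; n)=\left[\Psi(\theta; n)/\Delta\right]\cdot\left[\Xi(\theta; n)/\Delta^{2}\right]^{-1}\cdot\left[\Psi(\theta; n)/\Delta\right]\longrightarrow\check\Psi(\theta)\cdot\check\Xi^{-1}(\theta)\cdot\check\Psi(\theta)
\end{align*}
as $n\to\infty$, which is the claimed limit.

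The only point requiring slight care is the invertibility of $\Xi(\theta; n)$ for each sufficiently large $n$ and of $\check\Xi(\theta)$ in the limit; this follows from uniform non-degeneracy of $\bar q^{\theta}$ (from Condition \ref{basicconditions}) combined with Condition \ref{id2} passed through to the continuous-time limit via the expansion of the weighted differences. Otherwise the argument is pure bookkeeping of Taylor remainders, entirely parallel to Lemma \ref{L:FisherInformation}, and I do not expect any substantive obstacle.
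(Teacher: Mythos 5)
Your proposal is correct and follows essentially the same route as the paper, which gives no separate proof of this lemma but states that it follows near verbatim the argument of Lemma \ref{L:FisherInformation}: you reuse the expansion $Z^{\theta}(t_k,t_{k-1})\cdot\nabla_\theta\bar X^{\theta}_{t_{k-1}}-\nabla_\theta\bar X^{\theta}_{t_k}=-\Delta\,(\nabla_\theta\bar\lambda^{\theta})(\bar X^{\theta}_{t_{k-1}})+O(\Delta^2)$ from (\ref{decomposition}) together with $Q_k(\theta)=\Delta\,\bar q^{\theta}(\bar X^{\theta}_{t_{k-1}})+O(\Delta^2)$, identify the rescaled $\Psi$ and $\Xi$ as Riemann sums, and pass to the limit through the identity $\tilde M^{-1}=\Psi\Xi^{-1}\Psi$. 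This matches the paper's intended argument, and the bookkeeping of the $\Delta$ powers is right.
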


 The next lemma establishes that the variance of the SMCE is bounded below by that of the MCE in the sense that the difference of the covariance matrices is positive semidefinite. In general, one loses asymptotic statistical efficiency when one moves from the MCE to the SMCE.
\begin{lemma}\label{L:ComparisonEstimators}
Assume Conditions \ref{basicconditions}, \ref{recurrencecondition}, \ref{ellcondition}, \ref{id1}, and \ref{id2}, and, in the $\infty$ regime, Condition \ref{centeringcondition}. Let $M(\theta; n)$ and $\tilde M(\theta; n)$ be as in Theorems \ref{normality} and \ref{simplenormality} respectively. For any $\theta\in\Theta$ and $n\geq1$, the $k\times k$ matrix
\begin{align}
\tilde M(\theta; n)- M(\theta; n)\label{covariancedelta}
\end{align}
is positive semidefinite. In particular, for any $\theta\in\Theta$, the difference of limits
\begin{align}
\lim_{n\to\infty}\tilde M(\theta; n)- \lim_{n\to\infty}M(\theta; n)\label{limitcovariancedelta}
\end{align}
is positive semidefinite.
\end{lemma}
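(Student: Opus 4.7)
The plan is to recognize Lemma \ref{L:ComparisonEstimators} as a matrix-valued Cauchy--Schwarz (or Gauss--Markov) inequality and prove it via a Schur complement argument. Set $A_k:=Z^{\theta}(t_k,t_{k-1})\cdot\nabla_\theta\bar X^{\theta}_{t_{k-1}}-\nabla_\theta\bar X^{\theta}_{t_k}$, so that
\begin{equation*}
M^{-1}(\theta;n)=\sum_{k=1}^n A_k^T Q_k^{-1}(\theta)A_k,\quad \Psi(\theta)=\sum_{k=1}^n A_k^T A_k,\quad \Xi(\theta)=\sum_{k=1}^n A_k^T Q_k(\theta)A_k.
\end{equation*}
The goal is to show $\Psi^{-1}(\theta)\Xi(\theta)\Psi^{-1}(\theta)-M(\theta;n)$ is positive semidefinite.

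The key step is to form the block matrix
\begin{equation*}
\mathcal{M}(\theta):=\begin{pmatrix} M^{-1}(\theta;n) & \Psi(\theta)\\ \Psi(\theta) & \Xi(\theta)\end{pmatrix}=\sum_{k=1}^n\begin{pmatrix}A_k^T Q_k^{-1/2}(\theta)\\ A_k^T Q_k^{1/2}(\theta)\end{pmatrix}\begin{pmatrix}Q_k^{-1/2}(\theta)A_k & Q_k^{1/2}(\theta)A_k\end{pmatrix},
\end{equation*}
which is manifestly positive semidefinite, being a finite sum of outer products. Here Lemma \ref{qlemma} ensures that each $Q_k(\theta)$ is symmetric and uniformly positive definite so that $Q_k^{\pm 1/2}(\theta)$ are well defined, and Condition \ref{id2} combined with Lemma \ref{qlemma} ensures that $M^{-1}(\theta;n)$ is strictly positive definite (also implicit in the invertibility step of Theorem \ref{normality}).

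Because $\mathcal{M}(\theta)\succeq 0$ and its upper-left block is invertible, the standard Schur complement characterization of positive semidefiniteness yields
\begin{equation*}
\Xi(\theta)-\Psi(\theta)\cdot M(\theta;n)\cdot\Psi(\theta)\succeq 0.
\end{equation*}
The same Schur complement argument applied with the roles of the blocks reversed (or simply the chain $\Psi(\theta)\succeq (\min_k\|Q_k^{-1}(\theta)\|^{-1})\cdot M^{-1}(\theta;n)\succ 0$) gives that $\Psi(\theta)$ is invertible. Conjugating the preceding inequality by $\Psi^{-1}(\theta)$ yields
\begin{equation*}
\tilde M(\theta;n)-M(\theta;n)=\Psi^{-1}(\theta)\Xi(\theta)\Psi^{-1}(\theta)-M(\theta;n)\succeq 0,
\end{equation*}
which is exactly (\ref{covariancedelta}).

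The statement (\ref{limitcovariancedelta}) for the limits then follows at once from the fact that the cone of positive semidefinite matrices is closed: existence of the limits is provided by Lemmata \ref{L:FisherInformation} and \ref{L:SimpleFisherInformation}, and the finite-$n$ inequality passes to the limit. I expect the main obstacle to be purely organizational---verifying the block-matrix identity and invoking Lemma \ref{qlemma} together with Condition \ref{id2} to justify all inversions---rather than any delicate analytic estimate.
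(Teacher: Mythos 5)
Your proof is correct. It rests on the same core inequality as the paper's --- the weighted Cauchy--Schwarz inequality obtained by splitting each summand with the factors $Q_k^{\pm1/2}$ --- but the packaging is genuinely different. The paper argues scalar-by-scalar: for each $\xi$ it writes $\xi^T\tilde M^{-1}\xi=\Sigma_k\langle Q_k^{1/2}d_k\Xi^{-1}\Psi\xi,\,Q_k^{-1/2}d_k\xi\rangle$, applies Cauchy--Schwarz to obtain $|\xi^T\tilde M^{-1}\xi|^2\le(\xi^T\tilde M^{-1}\xi)(\xi^TM^{-1}\xi)$, cancels, and thereby proves $\tilde M^{-1}\preceq M^{-1}$, passing to $\tilde M\succeq M$ only at the end by inverting the Loewner order (a step it records as ``equivalent'' without further comment). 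Your block matrix $\mathcal M(\theta)\succeq0$ together with its Schur complement is precisely the matricial form of that same Cauchy--Schwarz step; I verified the block identity (the $(1,1)$, $(1,2)$, and $(2,2)$ blocks are indeed $M^{-1}$, $\Psi$, and $\Xi$), and the route has the mild advantage of producing $\Xi-\Psi M\Psi\succeq0$ and hence $\tilde M-M\succeq0$ directly, the only inversions required being those of $M^{-1}$ and $\Psi$, both of which you justify ($M^{-1}\succ0$ from Condition \ref{id2} and Lemma \ref{qlemma}, exactly as in the invertibility step of Theorem \ref{normality}, and $\Psi\succeq\min_k\lambda_{\min}(Q_k)\,M^{-1}\succ0$). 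Your treatment of the limit statement --- closedness of the positive semidefinite cone plus existence of the limits from Lemmata \ref{L:FisherInformation} and \ref{L:SimpleFisherInformation} --- matches the paper's one-line appeal to continuity. No gaps.
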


\begin{proof}
We suppress the parameters $\theta$ and $n$ and define
\begin{align*}
d_k&:=Z(t_k,t_{k-1})\cdot\nabla_\theta\bar X_{t_{k-1}}-\nabla_\theta\bar X_{t_k}
\end{align*}
so that, recalling the notation $\Psi$ and $\Xi$ from Theorem \ref{simplenormality},
\begin{align*}
\Psi&=\Sigma_kd_k^Td_k,\\
\Xi&=\Sigma_kd_k^TQ_kd_k.
\end{align*}
The first statement, that $\tilde M - M$ is positive semidefinite, is equivalent to the statement that $\tilde M^{-1}- M^{-1}$ is negative semidefinite; we now demonstrate the latter. Let $\xi\in\mathbb{R}^k\setminus\{0\}$ be given.
\begin{align*}
|\xi^T\tilde M^{-1}\xi|^2&=|\Sigma_k\langle d_k\Xi^{-1}\Psi\xi,d_k\xi\rangle|^2\\
&=|\Sigma_k\langle (Q^{1/2})d_k\Xi^{-1}\Psi\xi,(Q^{1/2})^{-1}d_k\xi\rangle|^2\\
&\leq\left[\Sigma_k|(Q^{1/2})d_k\Xi^{-1}\Psi\xi|^2\right]\cdot\left[\Sigma_k|(Q^{1/2})^{-1}d_k\xi|^2\right]\\
&=\left[\Sigma_k\langle d_k^TQd_k\Xi^{-1}\Psi\xi,\Xi^{-1}\Psi\xi\rangle\right]\cdot\left[\Sigma_k\langle d_k^TQd_k\xi,\xi\rangle\right]\\
&=\left[\xi^T\tilde M^{-1}\xi\right]\cdot\left[\xi^T M^{-1}\xi\right].
\end{align*}
Cancelling a factor of $\xi^T\tilde M^{-1}\xi$, which is positive, one obtains
\begin{align*}
\xi^T\left[\tilde M^{-1}-M^{-1}\right]\xi\leq 0.
\end{align*}
This establishes the first statement, that (\ref{covariancedelta}) is positive semidefinite. The second statement, that the difference of limits (\ref{limitcovariancedelta}) is also positive semidefinite, follows immediately by continuity.
\end{proof}

\section{Limit of High-Frequency Observation}\label{S:large_n}
In Section \ref{S:InferenceFixedNumberObs}, we considered the important case in which the number of observations $n$ is fixed, obtaining consistent and asymptotically-normal estimators. The question naturally arises as to whether these properties carry over into the asymptotic regime of high-frequency observation, in which $n$ is taken arbitrarily large \textit{at the same time} that $\epsilon$ is taken to vanish; that is, recalling our notation $\Delta:=T/n$ for the sampling interval, we wish to consider the behavior of the estimators in the joint limit $\epsilon+\Delta\to0$.

It turns out that if $\epsilon$ is $o(\Delta)$ as $\Delta\to0$, then the theory of Sections \ref{S:InferenceFixedNumberObs} and \ref{S:simplified} carries over with only minor adjustments.

\begin{condition} (Identifiability Condition 3)\label{id3}
For any $\theta_0\in\Theta$, the integral
\begin{align*}
\Lambda_t(\theta,\theta_0):=\bar{\lambda}^{\theta_{0}}(\bar{X}^{\theta_{0}}_{t}) - \bar{\lambda}^{\theta}(\bar{X}^{\theta}_{t})-(\nabla_{x}\bar{\lambda}^{\theta}) (\bar{X}^{\theta}_{t})\cdot[\bar{X}_{t}^{\theta_{0}}- \bar{X}_{t}^{\theta}]
\end{align*}
vanishes for all $t\in[0,T]$ if and only if $\theta=\theta_0$.
\end{condition}

\begin{theorem}\label{nuconsistency} (Consistency of the MCE and SMCE as $\epsilon+\Delta\to0$)
Assume Conditions \ref{basicconditions}, \ref{recurrencecondition}, \ref{ellcondition}, and \ref{id3} and, in the $\infty$ regime, Condition \ref{centeringcondition}. Assume that $\Delta$ does not decrease too quickly relative to $\epsilon$, so that $\epsilon$ is $o(\Delta)$ as $\Delta\to0$. For any $\theta_0\in\Theta$ and $\eta>0$,
\begin{align*}
\lim_{(\epsilon+\Delta)\to0}P\left(|\bar\theta^\varepsilon(\{X^{\varepsilon,\theta_0}_{t_k}\}_{k=1}^n)-\theta_0|>\eta\right)=0,\\
\lim_{(\epsilon+\Delta)\to0}P\left(|\tilde\theta^\varepsilon(\{X^{\varepsilon,\theta_0}_{t_k}\}_{k=1}^n)-\theta_0|>\eta\right)=0.
\end{align*}
\end{theorem}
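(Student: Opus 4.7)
The plan is to show that, under the hypothesis $\epsilon=o(\Delta)$, the natural rescalings of the two contrast functions converge in probability, uniformly in $\theta\in\Theta$, to deterministic limits that are continuous in $\theta$ and uniquely minimized at $\theta_0$ on account of Condition \ref{id3}. Consistency of both $\bar\theta^\varepsilon$ and $\tilde\theta^\varepsilon$ then follows by the same argmin/modulus-of-continuity argument used to prove Theorem \ref{consistency} (Theorem 3.2.8 in \cite{dacunha}). Concretely, I aim to establish
\begin{align*}
\frac{1}{\Delta}\sum_{k=1}^n|\tilde F_k(\theta;\{X^{\varepsilon,\theta_0}_{t_k}\})|^2 &\to \int_0^T|\Lambda_s(\theta,\theta_0)|^2\,ds \quad\text{(SMCE)},\\
\Delta\cdot U^\varepsilon(\theta;\{X^{\varepsilon,\theta_0}_{t_k}\}) &\to \int_0^T\Lambda_s^T(\theta,\theta_0)\cdot(\bar q^\theta)^{-1}(\bar X^\theta_s)\cdot\Lambda_s(\theta,\theta_0)\,ds \quad\text{(MCE)},
\end{align*}
in probability, uniformly in $\theta$, as $\epsilon+\Delta\to0$. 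Since $s\mapsto\Lambda_s(\theta,\theta_0)$ is continuous in $s$, Condition \ref{id3} forces each limit to vanish only at $\theta=\theta_0$; the weighted version uses in addition the uniform positive-definiteness of $\bar q^\theta$ guaranteed by Condition \ref{basicconditions}.

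The first step is to expand $\tilde F_k$ using $X^{\varepsilon,\theta_0}_{t_k}=\bar X^{\theta_0}_{t_k}+\sqrt\epsilon\,\varphi^{\varepsilon,\theta_0}_{t_k}$ (Theorem \ref{philimit}), splitting $\tilde F_k(\theta;\{X^{\varepsilon,\theta_0}_{t_k}\})=D_k(\theta)+\sqrt\epsilon\,\xi_k(\theta)$ with
\begin{align*}
D_k(\theta)&:=[\bar X^{\theta_0}_{t_k}-\bar X^\theta_{t_k}]-Z^\theta(t_k,t_{k-1})[\bar X^{\theta_0}_{t_{k-1}}-\bar X^\theta_{t_{k-1}}],\\
\xi_k(\theta)&:=\varphi^{\varepsilon,\theta_0}_{t_k}-Z^\theta(t_k,t_{k-1})\,\varphi^{\varepsilon,\theta_0}_{t_{k-1}}.
\end{align*}
A Taylor expansion based on $\bar X^\theta_{t_k}-\bar X^\theta_{t_{k-1}}=\Delta\bar\lambda^\theta(\bar X^\theta_{t_{k-1}})+O(\Delta^2)$ and $Z^\theta(t_k,t_{k-1})=1_m+\Delta(\nabla_x\bar\lambda^\theta)(\bar X^\theta_{t_{k-1}})+O(\Delta^2)$, applied to both $\bar X^\theta$ and $\bar X^{\theta_0}$, gives $D_k(\theta)=\Delta\cdot\Lambda_{t_{k-1}}(\theta,\theta_0)+\rho_k(\theta)$ with $|\rho_k(\theta)|\leq C\Delta^2$ uniformly in $k$ and $\theta$ by Condition \ref{basicconditions}.

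Plugging this into the SMCE objective, the leading deterministic contribution $\Delta\sum_k|\Lambda_{t_{k-1}}(\theta,\theta_0)|^2$ is a Riemann sum converging uniformly in $\theta$ to $\int_0^T|\Lambda_s(\theta,\theta_0)|^2\,ds$. The decisive stochastic estimate is $\sum_{k=1}^n E|\xi_k(\theta)|^2=O(1)$ uniformly in $\theta$: using the semigroup relation $Z(t_k,s)=Z(t_k,t_{k-1})Z(t_{k-1},s)$ with the representation of $\varphi^{\varepsilon,\theta_0}$ from Theorem \ref{philimit}, $\xi_k$ splits into a local drift of order $\Delta$ plus Wiener integrals over $[t_{k-1},t_k]$ (with a further negligible perturbation from replacing $Z^{\theta_0}$ by $Z^\theta$, which is $O(\Delta)$ times the bounded quantity $\varphi^{\varepsilon,\theta_0}_{t_{k-1}}$), giving $E|\xi_k|^2=O(\Delta)$ and thus $\sum_k E|\xi_k|^2=O(n\Delta)=O(T)$. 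Consequently $\epsilon\sum_k|\xi_k|^2/\Delta=O_p(\epsilon/\Delta)=o_p(1)$, and Cauchy--Schwarz gives the same control on the cross terms; the $O(\Delta^2)$ deterministic remainder contributes $O(\Delta)$ after summation and division by $\Delta$. For the MCE, the extra drift $\sqrt\epsilon\int_{t_{k-1}}^{t_k}Z^\theta(t_k,s)\bar J^\theta(\bar X^\theta_s)\,ds=O(\sqrt\epsilon\Delta)$ in $F^\varepsilon_k$ is negligible, and $Q_k(\theta)=\Delta\bar q^\theta(\bar X^\theta_{t_{k-1}})+O(\Delta^2)$ together with Lemma \ref{qlemma} yields $Q_k^{-1}(\theta)=\Delta^{-1}(\bar q^\theta)^{-1}(\bar X^\theta_{t_{k-1}})+O(1)$, reducing the analysis of $\Delta\cdot U^\varepsilon(\theta)$ to the weighted version of the SMCE computation.

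The main obstacle is the uniform-in-$\theta$ propagation of stochastic errors across a growing number $n=T/\Delta$ of summands: the per-step $L^2$ bounds on $\xi_k$ are sharp, and the aggregate survives only because $\epsilon=o(\Delta)$; relaxing this hypothesis would require a finer martingale or concentration argument that I do not pursue here. Granted the uniform convergence above and identification at $\theta_0$ via Condition \ref{id3}, the argmin-continuity argument of Theorem \ref{consistency} carries over unchanged and yields consistency of both $\bar\theta^\varepsilon$ and $\tilde\theta^\varepsilon$.
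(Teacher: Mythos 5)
Your overall architecture is the same as the paper's: show that the (suitably normalized) contrast converges in probability, uniformly in $\theta$, to a deterministic function identified at $\theta_0$ by Condition \ref{id3}, and then invoke the argmin/modulus-of-continuity argument of Theorem \ref{consistency}. The paper implements this by substituting Lemma \ref{nucontrastlimit} for Lemma \ref{contrastlimit}; your Riemann-sum identification of the limit as $\int_0^T|\Lambda_s(\theta,\theta_0)|^2ds$, resp.\ $\int_0^T\Lambda_s^T(\bar q^\theta)^{-1}\Lambda_s\,ds$, makes explicit the role of Condition \ref{id3}, which the paper leaves implicit. However, there is one genuine gap in your argument: the bound $E|\xi_k(\theta)|^2=O(\Delta)$ cannot be extracted from Theorem \ref{philimit} as you propose. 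The representation \eqref{phione}--\eqref{phitwo} controls the remainder $\tilde{\mathcal R}^\varepsilon$ only uniformly in $t$ \emph{in probability}, with no rate and no $\mathcal L^2$ bound; when you localize to the increment $\tilde{\mathcal R}^{\varepsilon}_{t_k}-Z^{\theta}(t_k,t_{k-1})\tilde{\mathcal R}^{\varepsilon}_{t_{k-1}}$ and sum over $n=T/\Delta$ indices, you need precisely the per-increment rate that Theorem \ref{philimit} does not supply (the paper flags exactly this quantity in Section \ref{S:Conclusions} as the unresolved crux). The estimate you want is true, but the paper proves it by bypassing $\varphi^\varepsilon$ entirely: in Lemma \ref{nufbound} and in the treatment of the term $I$ in Lemma \ref{nucontrastlimit}, the increment $[X^{\varepsilon}_{t_k}-\bar X_{t_k}]-Z(t_k,t_{k-1})[X^{\varepsilon}_{t_{k-1}}-\bar X_{t_{k-1}}]$ is decomposed directly from the SDE into a drift-mismatch integral, a stochastic integral of $\sqrt\epsilon\,\sigma$, and $(1_m-Z)(X^\varepsilon-\bar X)$, and the drift mismatch is controlled by applying the It\^o formula to the Poisson-equation correctors $\chi$ and $\Phi$ of (\ref{chifunction}) and (\ref{phifunction}) on each subinterval, together with Lemmata \ref{moments} and \ref{ergodiclemma}. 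Without this (or an equivalent $\mathcal L^2$ rate on the localized remainder), your claim that $\sum_kE|\xi_k|^2=O(1)$ is unsupported, and the whole aggregation step collapses.

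Two further, more minor points. First, your claimed normalization for the MCE is off by a factor of $\Delta$: since $F_k^\varepsilon=O(\Delta)$ and $Q_k^{-1}=O(\Delta^{-1})$, it is $U^\varepsilon$ itself (not $\Delta\cdot U^\varepsilon$) that converges to $\int_0^T\Lambda_s^T(\bar q^\theta)^{-1}(\bar X^\theta_s)\Lambda_s\,ds$; as written, $\Delta\cdot U^\varepsilon\to0$ identically in $\theta$ and would not identify $\theta_0$. This is harmless for the estimator, since the argmin is invariant under positive scaling, but the limit statement must be corrected for the identification step to work. Second, your uniformity in $\theta$ of the stochastic bounds (needed because the argmin argument requires a uniform modulus of continuity) is asserted rather than proved; the paper handles this through the suprema over $\bar\Theta^2$ in Lemmata \ref{nuflemma} and \ref{flemmabar}, and you should do likewise.
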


\noindent\textit{Proof.} The proof is similar to those of Theorems \ref{consistency} and \ref{simpleconsistency}, with Lemma \ref{nucontrastlimit} in place of Lemma \ref{contrastlimit}.

\qed

The following identifiability condition is the limiting analogue of Condition \ref{id2}.

\begin{condition} (Identifiability Condition 4)\label{id4}
For any $\theta\in\Theta$, the matrix-valued integral
\begin{align}
\int^T_0[(\nabla_\theta\bar\lambda^\theta)^T(\nabla_\theta\bar\lambda^\theta)](\bar X^\theta_s)ds
\end{align}
is invertible.
\end{condition}

We wish to mimic the proof of Theorem \ref{normality}. When one is considering the joint limit $\epsilon+\Delta\to0$, the contribution of the covariance corrections $Q_k^{-1}$ to the $\theta$ derivatives of the contrast function becomes a rather delicate quantity to handle. We therefore state and prove asymptotic normality of the MCE in this joint limit, only under the stronger assumption that $\epsilon$ is $o(\Delta^2)$, rather than merely $o(\Delta)$, as $\Delta\to0$. This is Theorem \ref{nunormality}.

Notably, the stronger assumption is not needed to adapt with little modification the proof of asymptotic normality of the SMCE for the same joint limit. With the omission of the weights, one essentially imposes the additional simplification of constant diffusion on the misspecified model that motivates the estimator - in particular, the corrections $Q_k^{-1}$ are essentially replaced with $1/\Delta$, and the delicate $\theta$ dependence is avoided altogether. Note that Theorem \ref{nusimplenormality} assumes only that $\epsilon$ is $o(\Delta)$.

\begin{theorem}\label{nunormality}(Asymptotic Normality of the MCE as $\epsilon+\Delta\to0$)
Assume Conditions \ref{basicconditions}, \ref{recurrencecondition}, \ref{ellcondition}, \ref{id3}, and \ref{id4}, and, in the $\infty$ regime, Condition \ref{centeringcondition}. Assume that $\Delta$ decreases slowly relative to $\epsilon$, so that $\epsilon$ is $o(\Delta^2)$ as $\Delta\to0$. For any given $\theta_0\in\Theta$,
$\frac1{\sqrt\epsilon}(\bar\theta^\varepsilon(\{X^{\varepsilon,\theta_0}_{t_k}\}^n_{k=1})-\theta_0)$ converges in distribution as $\epsilon+\Delta\to0$ to the normal distribution $\mathcal N(0,M(\theta_0))$, where the covariance is given by the formula
\begin{align*}
M(\theta)&:=\left[\int^T_0\left(\nabla_\theta\bar\lambda^\theta\right)^T(\bar X^\theta_s)\cdot(\bar q^\theta)^{-1}(\bar X^\theta_s)\cdot\left(\nabla_\theta\bar\lambda^\theta\right)(\bar X^\theta_s)ds\right]^{-1}.\nonumber
\end{align*}

\end{theorem}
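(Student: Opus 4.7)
The plan is to follow the Taylor-expansion strategy used in the proof of Theorem \ref{normality}, the main new feature being that the finite sums over $k=1,\ldots,n$ must now be analyzed as Riemann-type sums converging to integrals, and the leading stochastic term requires a martingale central limit theorem rather than the finite-dimensional convergence of laws that sufficed for fixed $n$. Starting from
\[
0 = \frac{1}{\sqrt{\epsilon}}(\nabla_\theta U^\varepsilon)(\bar\theta^\varepsilon) = \frac{1}{\sqrt{\epsilon}}(\nabla_\theta U^\varepsilon)(\theta_0) + (\nabla^2_\theta U^\varepsilon)(\theta^{\varepsilon,\dagger}) \cdot \frac{1}{\sqrt{\epsilon}}(\bar\theta^\varepsilon-\theta_0),
\]
it suffices to establish (a) a distributional limit $\frac{1}{\sqrt{\epsilon}}(\nabla_\theta U^\varepsilon)(\theta_0)\Rightarrow\mathcal N(0,\Gamma(\theta_0))$ with $\Gamma(\theta_0)=4M^{-1}(\theta_0)$, and (b) an invertible probability limit $(\nabla^2_\theta U^\varepsilon)(\theta^{\varepsilon,\dagger})\to H(\theta_0)=2M^{-1}(\theta_0)$. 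Slutsky then gives the result with covariance $H^{-1}\Gamma H^{-1}=M(\theta_0)$.

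For the gradient I would decompose $\frac{1}{\sqrt{\epsilon}}(\nabla_\theta U^\varepsilon)(\theta_0)$ into the same four terms $I,II,III,IV$ as in Theorem \ref{normality}. The key observation is that $d_k:=Z^\theta(t_k,t_{k-1})\nabla_\theta\bar X^\theta_{t_{k-1}}-\nabla_\theta\bar X^\theta_{t_k}$ admits the decomposition (\ref{decomposition}), giving $d_k=-\Delta\cdot(\nabla_\theta\bar\lambda^\theta)(\bar X^\theta_{t_{k-1}})+O(\Delta^2)$, while $Q_k(\theta)=\Delta\cdot\bar q^\theta(\bar X^\theta_{t_{k-1}})+O(\Delta^2)$, so that $d_k^T Q_k^{-1}$ has a nontrivial pointwise limit. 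Inserting the stochastic-integral representation (\ref{effref}) of $F^\varepsilon_k$ into term $I$ reduces it, modulo negligible errors, to a discrete stochastic integral
\[
-2\sum_{k=1}^n (\nabla_\theta\bar\lambda^\theta)^T(\bar q^\theta)^{-1}(\bar X^\theta_{t_{k-1}})\int_{t_{k-1}}^{t_k} q^{\theta,1/2}(X^\varepsilon_s,Y^\varepsilon_s)\,d\tilde W_s,
\]
whose predictable quadratic variation converges in probability by Lemma \ref{ergodiclemma} to $4\int_0^T(\nabla_\theta\bar\lambda^\theta)^T(\bar q^\theta)^{-1}(\nabla_\theta\bar\lambda^\theta)(\bar X^\theta_s)\,ds=4M^{-1}(\theta_0)$; a standard martingale CLT (with Lindeberg-type check supplied by the moment bounds of Theorems \ref{xlimit} and \ref{philimit}) yields the required Gaussian limit. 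Terms $II$ and $III$ vanish in probability by combining $X^\varepsilon_t-\bar X_t=O(\sqrt\epsilon)$ with $\|\nabla_\theta Z^\theta(t_k,t_{k-1})\|=O(\Delta)$, $\|Q_k^{-1}\|=O(1/\Delta)$, and $E|F^\varepsilon_k|=O(\sqrt{\epsilon\Delta})$.

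The hypothesis $\epsilon=o(\Delta^2)$ enters precisely when controlling term $IV=\frac{1}{\sqrt\epsilon}\sum_k (F^\varepsilon_k)^T\nabla_\theta(Q_k^{-1})F^\varepsilon_k$, together with its analogues in the Hessian. Since $\nabla_\theta Q_k^{-1}=-Q_k^{-1}(\nabla_\theta Q_k)Q_k^{-1}=O(1/\Delta)$ while $|F^\varepsilon_k|^2=O(\epsilon\Delta)$, each summand is $O(\epsilon)$; summing $n=T/\Delta$ of them and dividing by $\sqrt\epsilon$ yields $O(\sqrt\epsilon/\Delta)$, which vanishes if and only if $\sqrt\epsilon/\Delta\to 0$, i.e.\ $\epsilon=o(\Delta^2)$. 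This is the main technical obstacle: every other term in the expansion is handled under the weaker $\epsilon=o(\Delta)$, but the $\nabla_\theta Q_k^{-1}$ contributions stubbornly demand the stronger bound, which also explains why the SMCE (whose proof avoids these terms altogether) succeeds under $\epsilon=o(\Delta)$ only.

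For the Hessian, differentiating the gradient produces the same five summands as in Theorem \ref{normality}. The identical order-of-magnitude bookkeeping, now under $\epsilon=o(\Delta^2)$, forces every summand except the leading one to vanish in probability, leaving
\[
2\sum_{k=1}^n d_k^T Q_k^{-1} d_k\Big|_{\theta=\theta_0}.
\]
Applying the decomposition of $d_k$ exactly as in Lemma \ref{L:FisherInformation}, this is a Riemann sum converging in probability to $2\int_0^T(\nabla_\theta\bar\lambda^\theta)^T(\bar q^\theta)^{-1}(\nabla_\theta\bar\lambda^\theta)(\bar X^\theta_s)\,ds\big|_{\theta=\theta_0}=2M^{-1}(\theta_0)$, which is invertible by Condition \ref{id4}. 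Combining this with the martingale CLT for the gradient via Slutsky completes the proof.
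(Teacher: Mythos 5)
Your proposal is correct and follows essentially the same route as the paper: the same decomposition of the weighted gradient into the terms $I$--$IV$ from the proof of Theorem \ref{normality}, the same order-of-magnitude bookkeeping ($\|\nabla_\theta Q_k^{-1}\|=O(1/\Delta)$, $E|F^\varepsilon_k|^2=O(\epsilon\Delta)$ from Lemma \ref{nufbound}) identifying term $IV$ as the sole place where $\epsilon=o(\Delta^2)$ is forced, and the same Riemann-sum passage via the decomposition of Lemma \ref{L:FisherInformation} for the Hessian and the limiting covariance. You merely spell out more explicitly (via a martingale CLT for the triangular array in term $I$) what the paper compresses into ``combine the arguments of Theorem \ref{normality} and Lemma \ref{L:FisherInformation}.''
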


\noindent\textit{Proof.} Let us examine the proof of Theorem \ref{normality}. Consider, in the notation of (\ref{theabove}), the term $II$.
It is easy to see that $\nabla_\theta Z^{\theta}(t_k,t_{k-1})$ is uniformly bounded by a constant times $\Delta$, and Lemma \ref{qlemma} establishes that $||Q_k^{-1}(\theta)||$ is bounded by a constant times $1/\Delta$. Meanwhile, $E|\frac1{\sqrt\epsilon}\left[X^{\varepsilon,\theta_0}_{t_{k-1}}-\bar X^\theta_{t_{k-1}}\right]|^2$ is uniformly bounded by Theorem \ref{xlimit}. Thus, by an application of H\"older's inequality, $II$ will vanish even as $n\to\infty$, provided that the general term $E|F^\varepsilon_k(\theta)|^2$ vanishes faster than $\Delta^2$; but this is immediate from Lemma \ref{nufbound}, since we have assumed that $\epsilon$ tends to $0$ faster than does $\Delta$. $III$ vanishes similarly. $IV$ vanishes by Lemma \ref{nufbound} and the fact that $||\nabla_\theta(Q_k^{-1}(\theta))||$ is bounded by a constant times $1/\Delta$ (it is in order to accommodate this somewhat na\"ive bound on the derivative of the covariance correction that we have assumed for this theorem that $\epsilon$ is $o(\Delta^2)$, rather than merely $o(\Delta)$). The convergence in distribution of $I$ is as before, although in the regime of joint asymptotics $\epsilon+\Delta\to0$, one must of course combine the arguments of Theorem \ref{normality} and Lemma \ref{L:FisherInformation}.

The convergence in probability of the Hessian (\ref{hessianabove}) even as $n\to\infty$ is likewise.

\qed

\begin{theorem}\label{nusimplenormality}(Asymptotic Normality of the SMCE as $\epsilon+\Delta\to0$)
Assume Conditions \ref{basicconditions}, \ref{recurrencecondition}, \ref{ellcondition}, \ref{id3}, and \ref{id4}, and, in the $\infty$ regime, Condition \ref{centeringcondition}. Assume that $\Delta$ does not decrease too quickly relative to $\epsilon$, so that $\epsilon$ is $o(\Delta)$ as $\Delta\to0$. For any given $\theta_0\in\Theta$,
$\frac1{\sqrt\epsilon}(\tilde\theta^\varepsilon(\{X^{\varepsilon,\theta_0}_{t_k}\}^n_{k=1})-\theta_0)$ converges in distribution as $\epsilon+\Delta\to0$ to the normal distribution $\mathcal N(0,\tilde M(\theta_0))$, where the covariance is given by the formula

\begin{align*}
\tilde M(\theta)&:=\check\Psi^{-1}(\theta)\cdot\check\Xi(\theta)\cdot\check\Psi^{-1}(\theta),
\end{align*}
where
\begin{align*}
\check\Psi(\theta)&:=\int^T_0\left[\left(\nabla_\theta\bar\lambda^\theta\right)^T\cdot \left(\nabla_\theta\bar\lambda^\theta\right)\right](\bar X^\theta_s)ds,\\
\check\Xi(\theta)&:=\int^T_0\left[\left(\nabla_\theta\bar\lambda^\theta\right)^T\cdot\bar q^\theta\cdot\left(\nabla_\theta\bar\lambda^\theta\right)\right](\bar X^\theta_s)ds.
\end{align*}
\end{theorem}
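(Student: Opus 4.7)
\noindent\textit{Proof proposal.} The plan is to emulate the structure of the proof of Theorem \ref{nunormality} (which itself adapts the proof of Theorem \ref{normality}), now applied to the contrast function $S^\varepsilon(\theta) := \Sigma_{k=1}^n |\tilde F_k(\theta)|^2$ from (\ref{thetatilde}). Because the weights $Q_k^{-1}(\theta)$ have been removed, we no longer need to control their $\theta$-derivatives, so the weaker hypothesis $\epsilon = o(\Delta)$ will suffice in place of the $\epsilon = o(\Delta^2)$ needed for the MCE. Writing the first-order condition $\nabla_\theta S^\varepsilon(\tilde\theta^\varepsilon) = 0$ and applying Taylor's theorem in $\theta$ around $\theta_0$, I would obtain
\begin{align*}
\frac{1}{\sqrt\epsilon}(\tilde\theta^\varepsilon - \theta_0) = -\left(\nabla_\theta^2 S^\varepsilon\right)^{-1}(\theta^{\varepsilon,\dagger}) \cdot \frac{1}{\sqrt\epsilon}\nabla_\theta S^\varepsilon(\theta_0),
\end{align*}
where $\theta^{\varepsilon,\dagger}$ lies on the segment from $\tilde\theta^\varepsilon$ to $\theta_0$. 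It then suffices, via Slutsky, to identify an appropriately scaled invertible limit in probability for the Hessian and an appropriately scaled limit in distribution for the gradient.

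Set $d_k(\theta) := Z^\theta(t_k,t_{k-1}) \nabla_\theta \bar X^\theta_{t_{k-1}} - \nabla_\theta \bar X^\theta_{t_k}$. The Hessian decomposes as $\nabla_\theta^2 S^\varepsilon(\theta) = 2\Sigma_k d_k(\theta)^T d_k(\theta) + \mathcal{E}(\theta)$, where $\mathcal{E}$ collects terms containing either a factor $\tilde F_k(\theta)$ or a factor $[X^{\varepsilon,\theta_0}_{t_{k-1}} - \bar X^\theta_{t_{k-1}}]$. As in the Riemann-sum calculation of Lemma \ref{L:FisherInformation}, the decomposition (\ref{decomposition}) gives $d_k(\theta) = -\Delta(\nabla_\theta \bar\lambda^\theta)(\bar X^\theta_{t_{k-1}}) + O(\Delta^2)$, so that $\frac{1}{\Delta}\Sigma_k d_k(\theta)^T d_k(\theta) \to \check\Psi(\theta)$ uniformly on compact subsets of $\Theta$. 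The error $\mathcal{E}(\theta)$ is $o(\Delta)$ in probability by Lemma \ref{nufbound}, Theorem \ref{xlimit}, and the assumption $\epsilon = o(\Delta)$; together with Theorem \ref{nuconsistency} yielding $\theta^{\varepsilon,\dagger} \to \theta_0$ in probability, this gives $\frac{1}{\Delta}\nabla_\theta^2 S^\varepsilon(\theta^{\varepsilon,\dagger}) \to 2\check\Psi(\theta_0)$ in probability, with invertibility from Condition \ref{id4}.

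For the gradient, the dominant contribution is $\frac{2}{\sqrt\epsilon}\Sigma_k d_k(\theta_0)^T \tilde F_k(\theta_0)$. Using $\tilde F_k = F^\varepsilon_k + \sqrt\epsilon\int_{t_{k-1}}^{t_k} Z^{\theta_0}(t_k,s)\bar J^{\theta_0}(\bar X^{\theta_0}_s)ds$ together with the representation (\ref{effref}) for $F^\varepsilon_k(\theta_0)$, I would split the gradient into (i) the $\bar J^{\theta_0}$ integral contribution, which amounts to $\Sigma_k O(\Delta)\cdot O(\Delta) = O(\Delta) \to 0$; (ii) the remainder contribution arising from $\sqrt\epsilon[\tilde{\mathcal R}^{\varepsilon,\theta_0}_{t_k} - Z^{\theta_0}(t_k,t_{k-1})\tilde{\mathcal R}^{\varepsilon,\theta_0}_{t_{k-1}}]$, shown to be negligible using Lemma \ref{nufbound} and $\epsilon = o(\Delta)$; and (iii) the It\^o-integral contribution
\begin{align*}
2\Sigma_k d_k(\theta_0)^T\left[\int^{t_k}_{t_{k-1}} Z^{\theta_0}(t_k,s)(\sigma+\nabla_y\psi\cdot\tau_1)(X^{\varepsilon,\theta_0}_s,Y^{\varepsilon,\theta_0}_s)dW_s + \int^{t_k}_{t_{k-1}} Z^{\theta_0}(t_k,s)\nabla_y\psi\cdot\tau_2(X^{\varepsilon,\theta_0}_s,Y^{\varepsilon,\theta_0}_s)dB_s\right],
\end{align*}
where as in the proof of Theorem \ref{normality} $\psi$ denotes $\chi$ in the $\infty$ regime and $\Phi$ in the $\gamma$ regime. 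Substituting $d_k(\theta_0) \approx -\Delta(\nabla_\theta\bar\lambda^{\theta_0})(\bar X^{\theta_0}_{t_{k-1}})$ identifies (iii) as a Riemann-sum-weighted stochastic integral; a martingale central limit theorem, with the limiting predictable quadratic variation identified via the ergodic averaging lemma as
\begin{align*}
4\int_0^T (\nabla_\theta\bar\lambda^{\theta_0})^T(\bar X^{\theta_0}_s)\cdot\bar q^{\theta_0}(\bar X^{\theta_0}_s)\cdot(\nabla_\theta\bar\lambda^{\theta_0})(\bar X^{\theta_0}_s)ds = 4\check\Xi(\theta_0),
\end{align*}
gives $\frac{1}{\sqrt\epsilon\,\Delta}\nabla_\theta S^\varepsilon(\theta_0) \to \mathcal N(0,4\check\Xi(\theta_0))$. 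Combining via Slutsky yields $\frac{1}{\sqrt\epsilon}(\tilde\theta^\varepsilon-\theta_0) \to \mathcal N(0, \check\Psi^{-1}\check\Xi\check\Psi^{-1}) = \mathcal N(0,\tilde M(\theta_0))$.

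The main obstacle I anticipate is the rigorous treatment of item (ii). Theorem \ref{philimit} gives only that $\sup_{0\leq t\leq T}|\tilde{\mathcal R}^\varepsilon_t|$ is small in probability, but here one must control a sum of $n = T/\Delta$ increments of $\tilde{\mathcal R}^\varepsilon$ weighted by factors $d_k \sim \Delta$; the ratio of a typical remainder contribution to a typical Gaussian contribution after summation scales as $\sqrt\epsilon/\sqrt\Delta$, and the hypothesis $\epsilon = o(\Delta)$ is precisely the balance required. A careful re-examination of the proof of Theorem \ref{philimit} to extract $\mathcal L^p$ (rather than merely probabilistic) estimates on $\tilde{\mathcal R}^\varepsilon$, as mooted in the remark following that theorem, is the natural tool.
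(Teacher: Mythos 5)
Your proposal follows essentially the same route as the paper, whose own proof of this theorem is a two-sentence remark: carry over the argument of Theorem \ref{nunormality} (itself an adaptation of Theorem \ref{normality} combined with the Riemann-sum asymptotics of Lemma \ref{L:FisherInformation}), observing that the only term requiring $\epsilon=o(\Delta^2)$ there --- term $IV$ of (\ref{theabove}), which involves $\nabla_\theta(Q_k^{-1})$ --- is absent from the simplified contrast, so that $\epsilon=o(\Delta)$ suffices; your normalizations, limiting matrices $\check\Psi$ and $\check\Xi$, and identification of that missing term all match. The obstacle you single out in your final paragraph, namely controlling the sum over $n=T/\Delta$ increments of $\tilde{\mathcal R}^{\varepsilon}$, is precisely the point the authors themselves flag as ``the crux'' in their Conclusions, so you have correctly located where the delicacy lies rather than missed a step the paper supplies.
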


\noindent\textit{Proof.} As before, the arguments for the SMCE follow nearly verbatim those for the MCE. The weaker assumption $\epsilon=o(\Delta)$ suffices in this case because the term for which it was needed in the proof of Theorem \ref{nunormality}, i.e., term IV in (\ref{theabove}), does not appear if one is using the simplified contrast function in place of the original contrast function.

\qed

\begin{remark}
It is clear that one can re-express the conditions $\epsilon=o(\Delta)$ and $\epsilon=o(\Delta^{2})$ in terms of $\delta$ and $\Delta$. We have deliberately chosen instead to present the results in terms of $\epsilon$ and $\Delta$ because $\Delta$ is known in general (via $T$ and $n$) whereas $\epsilon$ can in principle be estimated via the magnitude of the quadratic variation of the process $X^{\varepsilon}$. We emphasize that in contrast to the rest of the subsampling literature, e.g. \cite{azencott2013sub,papavasiliou2009maximum}, we do not (in fact, cannot) impose conditions on how $1/\Delta$ grows relative to $n$, due simply to the fact that we have \textit{fixed} $T=n\cdot\Delta$ to study the regime of small noise.
\end{remark}

\section{Numerical Examples}\label{S:numericalsection}
We now present data from numerical simulations to supplement and illustrate the theory. We begin by considering the system
\begin{align}
dX^{\varepsilon}_t&=\frac\epsilon\delta(\sin(Y^{\varepsilon}_t)-\cos(Y^{\varepsilon}_t))dt-\theta_0X^{\varepsilon}_tdt+\sqrt{\epsilon}dW_t\label{firstsim}\\
Y^{\varepsilon}_t&:=X^{\varepsilon}/\delta\nonumber
\end{align}
for $t\in[0,T=1]$ with $X^{\varepsilon}_0=Y^{\varepsilon}_0=1\in\mathbb{R}$. This system was studied as an example in \cite{spiliopoulos2013maximum}. We suppose that we are in the $\infty$ regime with $\delta = \epsilon^{3/2}$. The results of Sections \ref{S:InferenceFixedNumberObs}, \ref{S:simplified}, and \ref{S:large_n} on consistency and asymptotic normality are illustrated in the numerical data that follow.

The limit $\bar{X}$ of the slow process $X^{\varepsilon}$ in (\ref{firstsim}) is
\begin{align*}
\bar{X}_t&=e^{-t\cdot{\theta_0}\cdot\left(\frac{2\pi}L\right)^2},
\end{align*}
where $L:=\int^{2\pi}_0e^{2(sin(y)+cos(y))}dy$.

We fix $\theta_0=1$ and simulate trajectories using an Euler scheme with $10^8$ evenly-spaced discrete time steps. We simulate both estimators, $\bar\theta^\varepsilon$ and $\tilde\theta^\varepsilon$, $1000$ times for each combination of $\epsilon=10^{-2}, 10^{-3}$ and $n=10, 10^2, 10^3$. Tables \ref{Table1} and \ref{Table2} present in each case the mean estimate, a normal-based confidence interval using the empirical standard deviation, and the theoretical standard deviation as per Theorems \ref{normality} and \ref{simplenormality}. The histograms in Figures \ref{Fig1}-\ref{Fig4} compare the empirical distribution of the estimates with the theoretical density curve.

\begin{table}[h]
\begin{center}
\begin{tabular}{|c|c||c|c|c|c|}
	\hline
	$\epsilon$ & $n$ & Mean Estimator & 68\% Confidence Interval & 95\% Confidence Interval & Theoretical SD\\
	\hline
	\hline
	$10^{-2}$ & $10$ & 1.0173 & (0.5483, 1.4862) & (0.0794, 1.9552) & 0.4370\\ \hline
	$10^{-2}$ & $10^2$ & 1.0430 & (0.5688, 1.5173) & (0.0945, 1.9916) & 0.4370\\ \hline
	$10^{-2}$ & $10^3$ & 1.0686 & (0.6104, 1.5267) & (0.1523, 1.9848) & 0.4370\\ \hline
	$10^{-3}$ & $10$ & 1.0005 & (0.8610, 1.1401) & (0.7215, 1.2794) & 0.1382\\ \hline
	$10^{-3}$ & $10^2$ & 0.9992 & (0.8571, 1.1413) & (0.7150, 1.2834) & 0.1382\\ \hline
	$10^{-3}$ & $10^3$ & 1.0068 & (0.8643, 1.1494) & (0.7218, 1.2919) & 0.1382\\
	\hline
\end{tabular}
\caption{Example 1 - MCE of $\theta_0=1$ with empirical confidence intervals and theoretical standard deviations.}\label{Table1}
\end{center}
\end{table}

\begin{table}[h]
\begin{center}
\begin{tabular}{|c|c||c|c|c|c|}
	\hline
	$\epsilon$ & $n$ & Mean Estimator & 68\% Confidence Interval & 95\% Confidence Interval & Theoretical SD\\
	\hline
	\hline
	$10^{-2}$ & $10$ & 1.0710 & (0.5902, 1.5518) & (0.1095, 2.0326) & 0.4370\\ \hline
	$10^{-2}$ & $10^2$ & 1.0649 & (0.5845, 1.5453) & (0.1041, 2.0256) & 0.4370\\ \hline
	$10^{-2}$ & $10^3$ & 1.0764 & (0.5958, 1.5570) & (0.1151, 2.0377) & 0.4370\\ \hline
	$10^{-3}$ & $10$ & 1.0115 & (0.8732, 1.1498) & (0.7349, 1.2880) & 0.1382\\ \hline
	$10^{-3}$ & $10^2$ & 1.0074 & (0.8598, 1.1549) & (0.7122, 1.3025) & 0.1382\\ \hline
	$10^{-3}$ & $10^3$ & 1.0108 & (0.8690, 1.1525) & (0.7272, 1.2943) & 0.1382\\
	\hline
\end{tabular}
\caption{Example 1 - SMCE of $\theta_0=1$ with empirical confidence intervals and theoretical standard deviations.}\label{Table2}
\end{center}
\end{table}

\begin{figure}[H]
\centering
\begin{minipage}{.45\textwidth}
	\centering
	\includegraphics[width=1\linewidth]{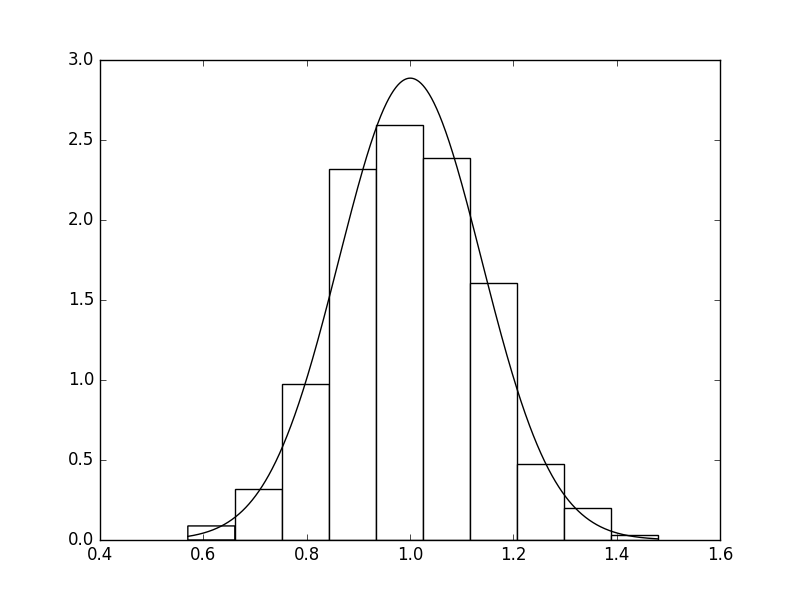}
	\caption{Example 1, MCE, $\epsilon=10^{-3}$, $n=10$}\label{Fig1}
\end{minipage}
\begin{minipage}{.45\textwidth}
	\centering
	\includegraphics[width=1\linewidth]{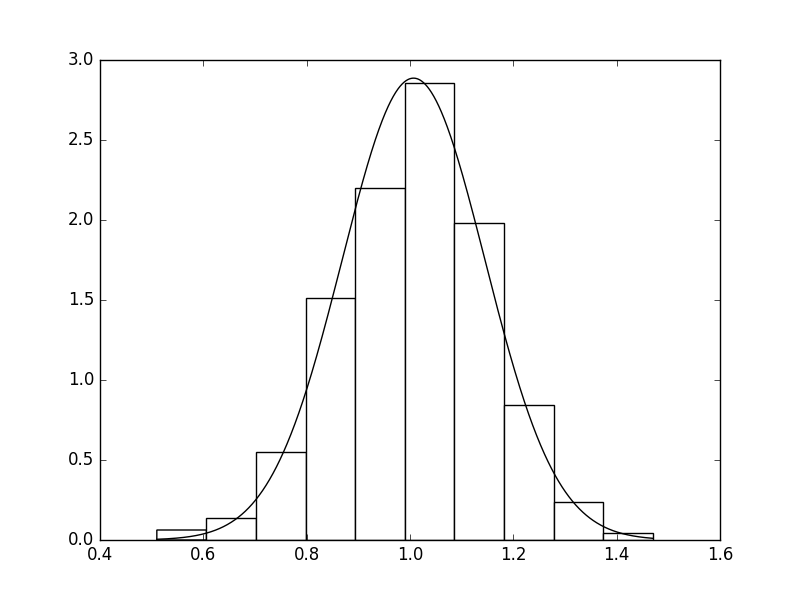}
	\caption{Example 1, MCE, $\epsilon=10^{-3}$, $n=10^3$}\label{Fig2}
\end{minipage}
\end{figure}
\begin{figure}
\begin{minipage}{.45\textwidth}
	\centering
	\includegraphics[width=1\linewidth]{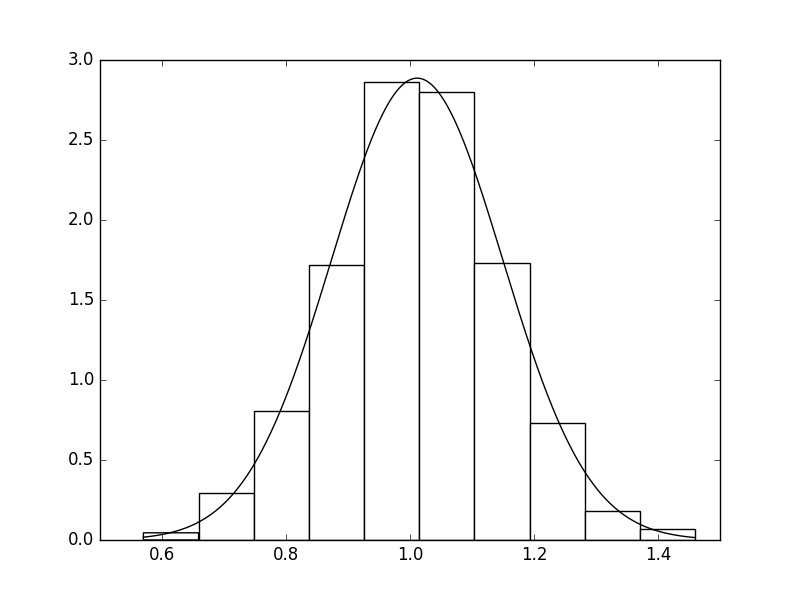}
	\caption{Example 1, SMCE, $\epsilon=10^{-3}$, $n=10$}\label{Fig3}
\end{minipage}
\begin{minipage}{.45\textwidth}
	\centering
	\includegraphics[width=1\linewidth]{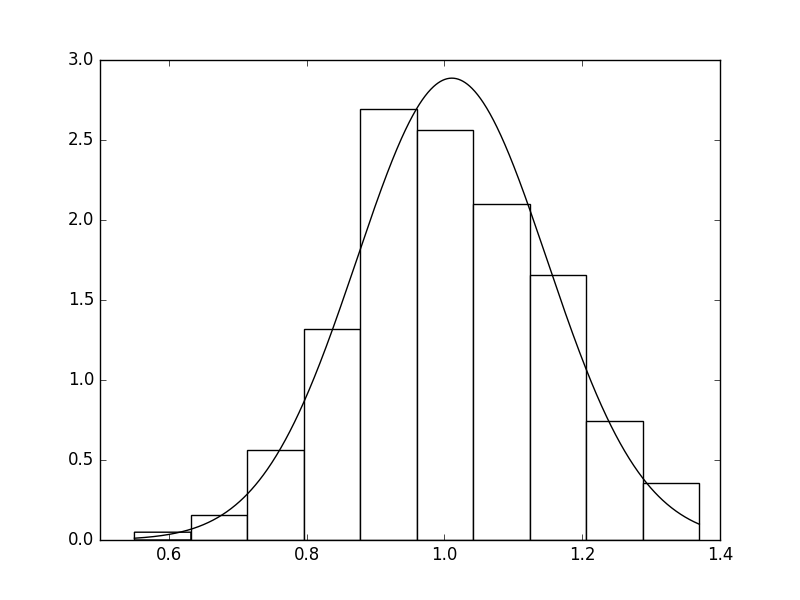}
	\caption{Example 1, SMCE, $\epsilon=10^{-3}$, $n=10^3$}\label{Fig4}
\end{minipage}
\end{figure}

Tables \ref{Table1} and \ref{Table2} and Figures \ref{Fig1}-\ref{Fig4} illustrate that even in the homogenization regime and even with a small number of data points, the minimum contrast estimators are consistent and asymptotically normal. We point out that although the empirical variance of $\tilde\theta^\varepsilon$ was larger than that of $\bar\theta^\varepsilon$, the difference was rather small, especially for small values of $\epsilon$ - this is consistent with the fact that the \textit{theoretical} limiting difference in this example was rather negligible. It is also worth pointing out that coefficients depending on $\theta$ appear in both slow and fast components.

Notice that the dependence on the fast process in (\ref{firstsim}) is periodic and that one may therefore interpret the fast process as taking values in a torus. We now consider a second example to illustrate the case where the fast dynamics are not restricted to a compact space.

\begin{align}
dX^{\varepsilon}_t&=\frac\epsilon\delta\theta_0Y^{\varepsilon}_tdt+\theta_0X^{\varepsilon}_t(Y^{\varepsilon}_t)^2dt+\sqrt{\epsilon}dW_t\label{simmodel}\\
dY^{\varepsilon}_t&=-\frac\epsilon{\delta^2}\frac1{\theta_0}Y^\varepsilon_tdt+\frac{\sqrt\epsilon}{\delta}dB_t\nonumber
\end{align}
for $t\in[0,T=1]$ with $X^{\varepsilon}_0=Y^{\varepsilon}_0=1\in\mathbb{R}$. Again, we suppose that we are in the $\infty$ regime with $\delta = \epsilon^{3/2}$ (this is the more challenging regime, relative to the $\gamma$ regime). As in the first example, coefficients depending on $\theta$ appear in both slow and fast components.

The limit $\bar{X}$ of the slow process $X^{\varepsilon}$ in (\ref{simmodel}) is
\begin{align*}
\bar{X}_t&=e^{t\cdot\frac{\theta_0^2}2}.
\end{align*}

We fix $\theta_0=1$ and simulate trajectories using an Euler scheme with $10^8$ evenly-spaced discrete time steps. We simulate $\bar\theta^\varepsilon$ $1000$ times for each of $\epsilon=10^{-2}, 10^{-3}$ with $n=10$; we simulate $\tilde\theta^\varepsilon$ $1000$ times for each combination of $\epsilon=10^{-2}, 10^{-3}$ and $n=10, 10^2, 10^3$. Tables \ref{Table4} and \ref{Table3} present in each case the mean estimate, a normal-based confidence interval using the empirical standard deviation, and the theoretical standard deviation as per Theorem \ref{normality}. The histograms in Figures \ref{Fig5}-\ref{Fig8} compare the empirical distribution of the estimates with the theoretical density curve.

\begin{table}[h]
\begin{center}
\begin{tabular}{|c|c||c|c|c|c|}
	\hline
	$\epsilon$ & $n$ & Mean Estimator & 68\% Confidence Interval & 95\% Confidence Interval & Theoretical SD\\
	\hline
	\hline
	$10^{-2}$ & $10$ & 1.1042 & (0.9961, 1.2123) & (0.8879, 1.3205) & 0.1079\\ \hline
	$10^{-3}$ & $10$ & 1.0129 & (0.9800, 1.0458) & (0.9472, 1.0787) & 0.0341\\
	\hline
\end{tabular}
\caption{Example 2 - MCE of $\theta_0=1$ with empirical confidence intervals and theoretical standard deviations.}\label{Table4}
\end{center}
\end{table}

\begin{table}[h]
\begin{center}
\begin{tabular}{|c|c||c|c|c|c|}
	\hline
	$\epsilon$ & $n$ & Mean Estimator & 68\% Confidence Interval & 95\% Confidence Interval & Theoretical SD\\
	\hline
	\hline
	$10^{-2}$ & $10$ & 0.9840 & (0.8704, 1.0977) & (0.7567, 1.2114) & 0.1079\\ \hline
	$10^{-2}$ & $10^2$ & 0.9773 & (0.8561, 1.0985) & (0.7349, 1.2197) & 0.1079\\ \hline
	$10^{-2}$ & $10^3$ & 0.9754 & (0.8544, 1.0963) & (0.7335, 1.2173) & 0.1079\\ \hline
	$10^{-3}$ & $10$ & 1.0008 & (0.9662, 1.0354) & (0.9316, 1.0699) & 0.0341\\ \hline
	$10^{-3}$ & $10^2$ & 0.9994 & (0.9654, 1.0333) & (0.9314, 1.0673) & 0.0341\\ \hline
	$10^{-3}$ & $10^3$ & 1.0003 & (0.9659, 1.0347) & (0.9314, 1.0692) & 0.0341\\
	\hline
\end{tabular}
\caption{Example 2 - SMCE of $\theta_0=1$ with empirical confidence intervals and theoretical standard deviations.}\label{Table3}
\end{center}
\end{table}

\begin{figure}[H]
\centering
\begin{minipage}{.45\textwidth}
	\centering
	\includegraphics[width=1\linewidth]{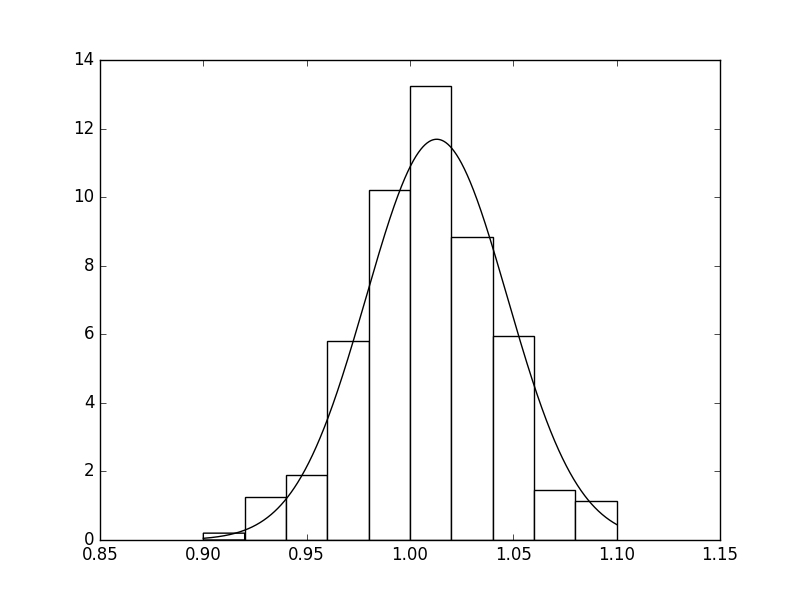}
	\caption{Example 2, MCE, $\epsilon=10^{-3}$, $n=10$}\label{Fig5}
\end{minipage}
\begin{minipage}{.45\textwidth}
	\centering
	\includegraphics[width=1\linewidth]{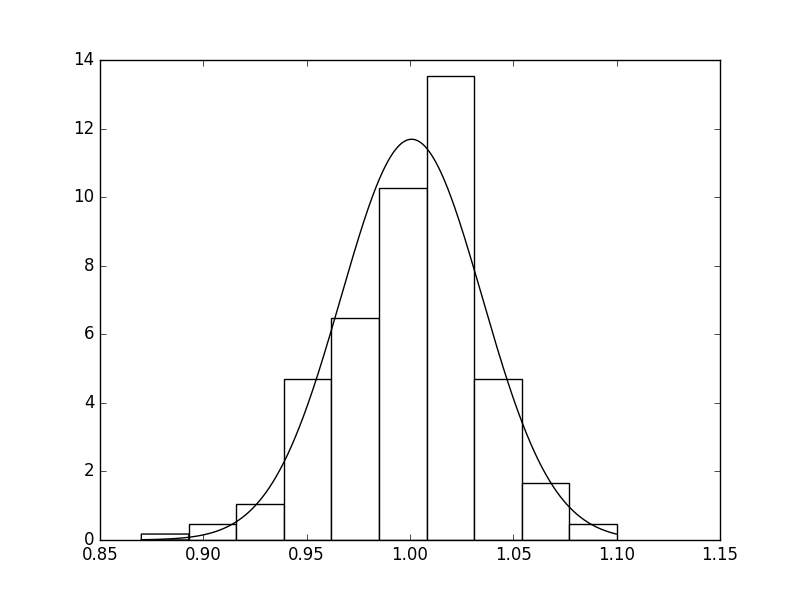}
	\caption{Example 2, SMCE, $\epsilon=10^{-3}$, $n=10$}\label{Fig6}
\end{minipage}
\end{figure}

\begin{figure}[H]
\centering
\begin{minipage}{.45\textwidth}
	\centering
	\includegraphics[width=1\linewidth]{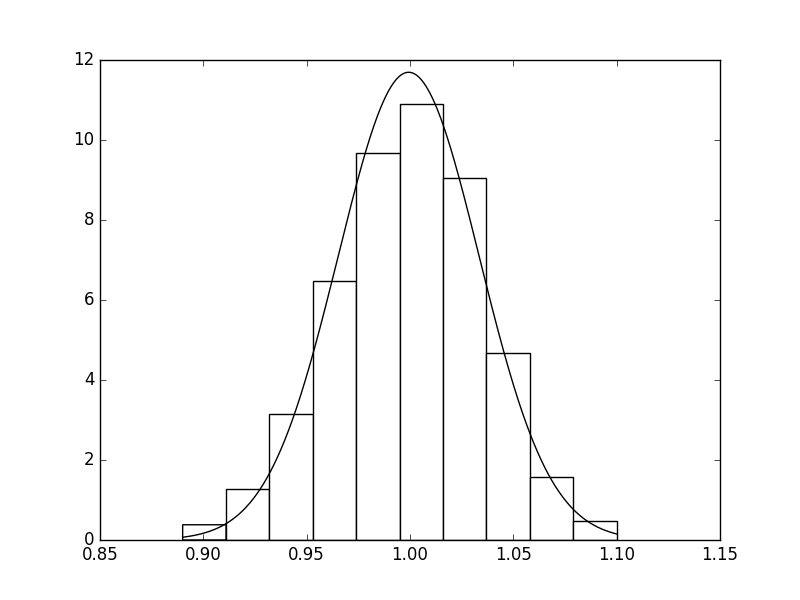}
	\caption{Example 2, SMCE, $\epsilon=10^{-3}$, $n=10^2$}\label{Fig7}
\end{minipage}
\begin{minipage}{.45\textwidth}
	\centering
	\includegraphics[width=1\linewidth]{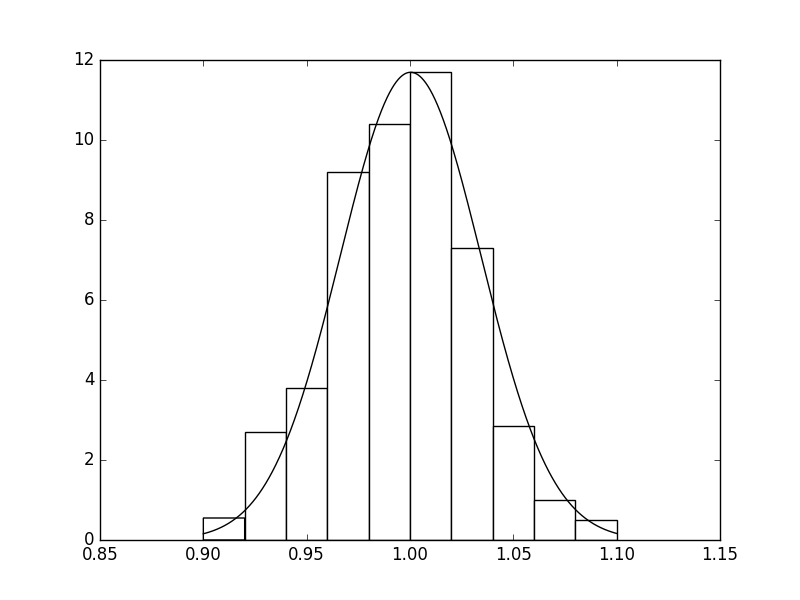}
	\caption{Example 2, SMCE, $\epsilon=10^{-3}$, $n=10^3$}\label{Fig8}
\end{minipage}
\end{figure}

Lastly, we revisit Examples 1 and 2 to see what happens when we deliberately violate the assumption that $\epsilon$ is of lower order than $\Delta:=T/n$. We repeat some of the simulations above with $n=10^6$; the results are presented in Tables \ref{Table5} and \ref{Table6} and Figures \ref{Fig9}-\ref{Fig12}. One sees that the estimators continue to behave well, suggesting that it may be possible to relax our assumptions about the relationship between $\epsilon$ and $\Delta$ in the theory of Section \ref{S:large_n} concerning estimation based on high-frequency observations.
\begin{table}[h]
\begin{center}
\begin{tabular}{|c|c||c|c|c|c|}
	\hline
	Estimator & Mean Estimator & 68\% Confidence Interval & 95\% Confidence Interval & Theoretical SD\\
	\hline
	\hline
	MCE & 1.0427 & (0.5630, 1.5225) & (0.0833, 2.0022) & 0.437\\ \hline
	SMCE & 1.1553 & (0.6839, 1.6268) & (0.2125, 2.0982) & 0.437\\
	\hline
\end{tabular}
\caption{Example 1 revisited - MCE vs SMCE with $\epsilon=10^{-2}$ and large $n$ ($n=10^6$).}\label{Table5}
\end{center}
\end{table}

\begin{figure}[H]
\centering
\begin{minipage}{.45\textwidth}
	\centering
	\includegraphics[width=1\linewidth]{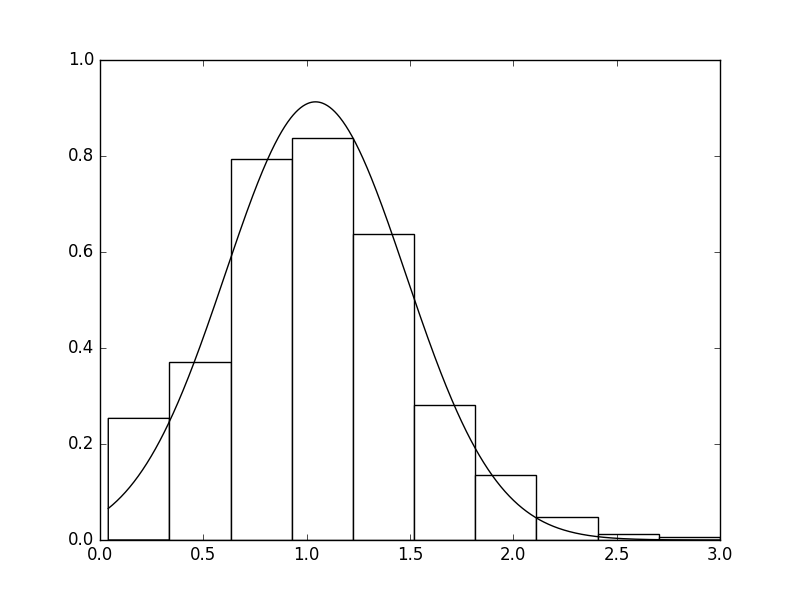}
	\caption{Example 1, MCE, $\epsilon=10^{-2}$, $n=10^6$}\label{Fig9}
\end{minipage}
\begin{minipage}{.45\textwidth}
	\centering
	\includegraphics[width=1\linewidth]{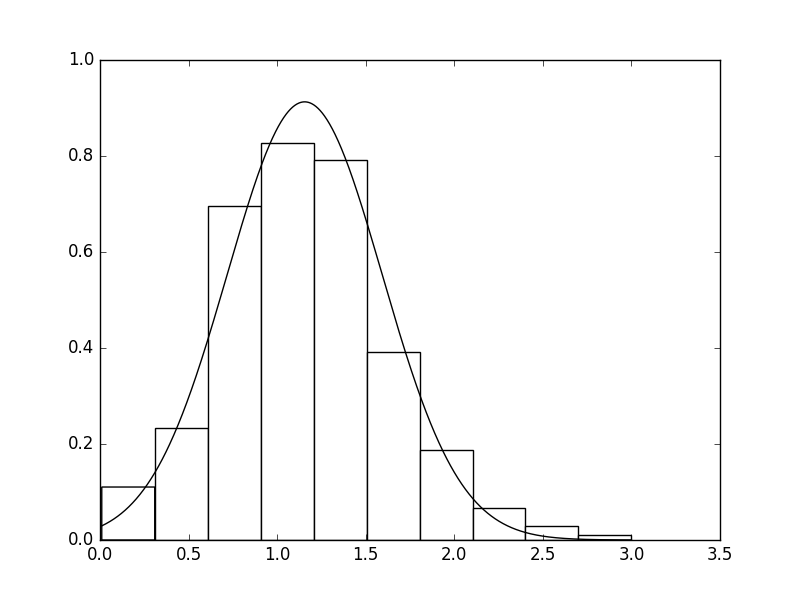}
	\caption{Example 1, SMCE, $\epsilon=10^{-2}$, $n=10^6$}\label{Fig10}
\end{minipage}
\end{figure}

\begin{table}[h]
\begin{center}
\begin{tabular}{|c|c||c|c|c|c|}
	\hline
	$\epsilon$ & Mean Estimator & 68\% Confidence Interval & 95\% Confidence Interval & Theoretical SD\\
	\hline
	\hline
	$10^{-2}$ & 0.9883 & (0.8777, 1.0988) & (0.7671, 1.2094) & 0.1079\\ \hline
	$10^{-3}$ & 1.0005 & (0.9683, 1.0328) & (0.9360, 1.065) & 0.0341\\
	\hline
\end{tabular}
\caption{Example 2 revisited - SMCE of $\theta_0=1$ with large $n$ ($n=10^6$).}\label{Table6}
\end{center}
\end{table}

\begin{figure}[H]
\centering
\begin{minipage}{.45\textwidth}
	\centering
	\includegraphics[width=1\linewidth]{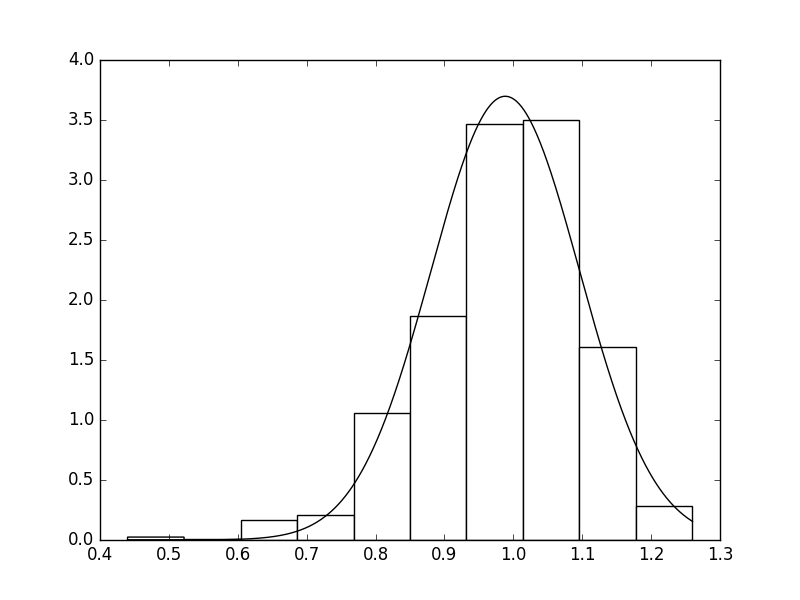}
	\caption{Example 2, SMCE, $\epsilon=10^{-2}$, $n=10^6$}\label{Fig11}
\end{minipage}
\begin{minipage}{.45\textwidth}
	\centering
	\includegraphics[width=1\linewidth]{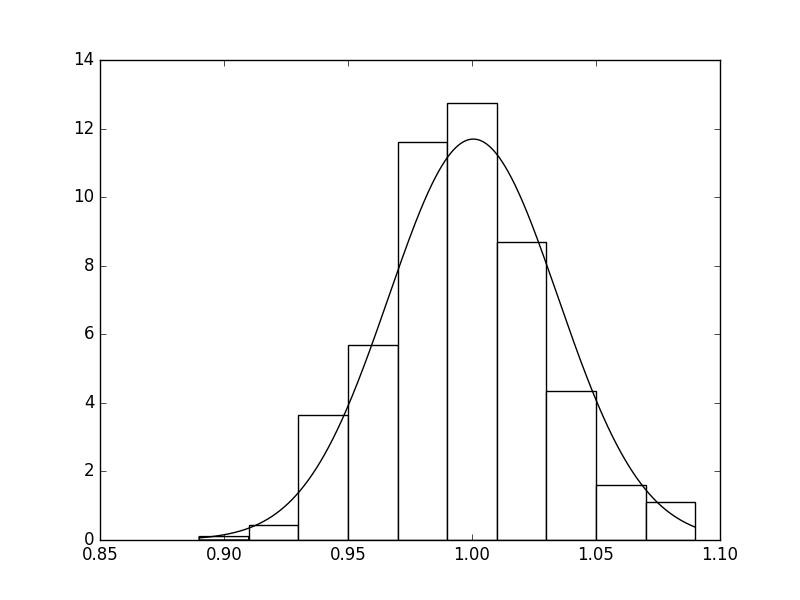}
	\caption{Example 2, SMCE, $\epsilon=10^{-3}$, $n=10^6$}\label{Fig12}
\end{minipage}
\end{figure}

\begin{remark}
Data for the MCE $\bar\theta^\varepsilon$ for the first example (\ref{simmodel}) are presented for $n=10$ only. We have run simulations also with $n=10^2, 10^3$, etc., but the estimators then began to exhibit a strong positive bias. In contrast, a glance at Table \ref{Table3} reveals that the SMCE $\tilde\theta^\varepsilon$ for the very same model produced robust estimates across all combinations of $\epsilon$ and $n$.

 Let us attempt to offer some insight into the reason for this. It turns out that the typical \textit{simplified} contrast curve in this model increases more steeply as one moves rightward from the minimum $\tilde\theta^\varepsilon$ than as one moves leftward. Meanwhile, the effect of introducing the covariance weights $Q^{-1}_k$ in this example is to rescale the contrast by a factor proportional to $\frac{\theta^{2}}{1+\theta^4}\frac{1}{e^{\theta\Delta}-1}$.  This has the potential to balance the leftward and rightward gradients about the new minimum $\bar\theta^\varepsilon$, but at the same time may lead to numerical instabilities when $\Delta$ is small. For a given value of $\epsilon$, the convexity of the simplified contrast about its minimum tends to diminish, whence the introduction of the covariance weights may cause the minimum to move too far to the right.
\end{remark}

\begin{remark}

It should be pointed out that the threshold below which $\varepsilon$ must fall in order for the theorems to apply will depend upon the problem at hand. The fact that SMCE appears both to be more robust than MCE and at the same time easier to compute (one need not compute the weights $Q^{-1}_{k}$) may be viewed as a practical advantage despite the increase in the variance.
\end{remark}

\section{Conclusions}\label{S:Conclusions}
We have presented in this paper discrete-data estimators for unknown parameters in multiscale diffusion models with small noise. The estimators are defined as minimizers of certain contrast functions motivated by a general second-order stochastic Taylor expansion of the slow process; we have shown them to be consistent, asymptotically normal, and, in the case of the MCE, asymptotically statistically efficient in an appropriate sense.

We also considered the case of high-frequency observation, in which the number of samples $n$ increases to infinity concurrently with the vanishing of the small noise. We showed that the asymptotic properties established for the fixed-$n$ case are still valid provided that the sampling interval $\Delta:=T/n$ does not vanish too quickly relative to the small noise. Such conditions are reminiscent of the subsampling prescribed in prior literature on statistical estimation for multiscale models. Despite our best efforts we have not managed to relax them, as has already been done for the case of small noise \textit{without} multiple scales (see \cite{guy2014parametric, SorensenUchida, Uchida}). Nevertheless, the numerical simulations of Section \ref{S:numericalsection} suggest that improvement is possible, posing an important question for future research to investigate. The crux is the development of an optimal characterization of the rate at which the approximation error $\sqrt\epsilon\left[\tilde{\mathcal R}^{\varepsilon,\theta}_{t_k}-Z^{\theta}(t_k,t_{k-1})\tilde{\mathcal R}^{\varepsilon,\theta}_{t_{k-1}}\right]$ vanishes with respect to $\varepsilon=(\epsilon,\delta)$ and $\Delta$; the $\delta$ dependence in particular presents an analytic challenge of unusual delicacy.

Finally, we wish to point out that it seems that the $\mathcal{L}^p$ bounds established in this work can be used to derive Berry-Esseen theorems to characterize the rates of convergence in the central limit theorems.

\section*{Appendix}\label{appendix}
We gather here technical results to which we appeal in the proofs of the main results of this paper.

\subsection*{A.1 Bounds Extended from \cite{gailusspiliopoulos}}\label{SS:ErgodicTheorems}
The SDEs for $X^{\varepsilon}$ and $Y^{\varepsilon}$ that we consider in this paper extend those in \cite{gailusspiliopoulos}. Certain fundamental auxiliary bounds established in \cite{gailusspiliopoulos} are also valid in our case. We gather the necessary bounds in this section. Because the proofs are similar, we do not present them here; the interested reader is referred to \cite{gailusspiliopoulos} for details.

\begin{lemma}\label{moments}
Assume Conditions \ref{basicconditions} and \ref{recurrencecondition} and, in the $\infty$ regime, Condition \ref{centeringcondition}. For any $p>0$ there is a constant $\tilde K$ such that uniformly in $\epsilon$ (and hence also $\delta=\delta(\epsilon))$) sufficiently small,
\begin{align}
E\sup_{0\leq t\leq T}|X^{\varepsilon}_t|^p\leq \tilde K, \nonumber\\
E\sup_{0\leq t\leq T}|X^{\varepsilon}_t-\bar X_t|^p\leq \tilde K.\nonumber
\end{align}
\end{lemma}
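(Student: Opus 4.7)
The plan is to prove both bounds by first controlling the fast process $Y^\varepsilon$ uniformly in $\varepsilon$, then transferring that control to the slow process $X^\varepsilon$. The second bound follows from the first, since $\bar X$ is the solution of a well-posed deterministic ODE with coefficients of at most polynomial growth, so $\sup_{0\leq t\leq T}|\bar X_t|$ is finite and the bound on $|X^\varepsilon_t - \bar X_t|^p$ is immediate from the triangle inequality and the first bound. Hence the real work is the first estimate.

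Step one: uniform $\mathcal{L}^p$ bounds on $Y^\varepsilon$. I would apply Itô's formula to a Lyapunov function of the form $V(y) = (1+|y|^2)^{p/2}$ (or a smoothed version for noninteger $p$) and use Condition \ref{recurrencecondition}, which says that $f \cdot y$ (resp.\ $(\gamma f + g)\cdot y$) tends to $-\infty$ as $|y|\to\infty$, uniformly in $x$. The dominant drift term in $dY^\varepsilon$ is of order $1/\delta^2$ in the $\infty$ regime (resp.\ $1/\delta$ in the $\gamma$ regime), and the quadratic variation of the noise is of order $\epsilon/\delta^2$; crucially, the stabilizing drift is at the same asymptotic scale as (or larger than) the noise, so the recurrence dominates. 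This yields a differential inequality $dE V(Y^\varepsilon_t) \leq C_1 - C_2 E V(Y^\varepsilon_t)/\delta^{\alpha}$, whence $\sup_{0\leq t\leq T} E |Y^\varepsilon_t|^p \leq \tilde K$ uniformly in $\varepsilon$. A BDG + Gronwall refinement promotes this to a bound on $E\sup_{0\leq t\leq T}|Y^\varepsilon_t|^p$.

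Step two: handle the asymptotically-singular drift $\frac{\epsilon}{\delta}b(X^\varepsilon_t, Y^\varepsilon_t)$ in the $\infty$ regime. This is the main obstacle. Here I invoke the Poisson equation solution $\chi$ from equation (\ref{chifunction}) and apply Itô's formula to $\chi(X^\varepsilon_t, Y^\varepsilon_t)$. The leading term in the generator is $(\epsilon/\delta^2)\mathcal{L}_{\infty,x}\chi = -(\epsilon/\delta^2)b$, so rearranging and multiplying by $\delta$ gives
\begin{align*}
\int_0^t \frac{\epsilon}{\delta} b(X^\varepsilon_s, Y^\varepsilon_s)\,ds = \delta\bigl[\chi(x_0,y_0) - \chi(X^\varepsilon_t, Y^\varepsilon_t)\bigr] + \delta\cdot R^\varepsilon_t,
\end{align*}
where $R^\varepsilon_t$ collects lower-order drift integrals and martingale terms involving $\nabla_x\chi$, $\nabla_y\chi$, $\nabla_x^2\chi$, etc. Since $\chi$ and its derivatives grow at most polynomially in $|y|$ (uniformly locally in $x$), the moment bounds on $Y^\varepsilon$ from step one give control of all terms on the right-hand side, and the prefactor $\delta \to 0$ renders them small. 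In the $\gamma$ regime this step is trivial since $\epsilon/\delta \to \gamma \in (0,\infty)$ and the term is not singular.

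Step three: close the Gronwall loop for $X^\varepsilon$. Having replaced the singular integral by a manageable expression, one has
\begin{align*}
|X^\varepsilon_t|^p \leq C\Bigl(|x_0|^p + \Bigl|\int_0^t c(X^\varepsilon_s, Y^\varepsilon_s)\,ds\Bigr|^p + \varepsilon^{p/2}\Bigl|\int_0^t \sigma(X^\varepsilon_s, Y^\varepsilon_s)\,dW_s\Bigr|^p + \text{controlled terms}\Bigr).
\end{align*}
I would take $\sup_{0\leq s\leq t}$, apply BDG to the stochastic integral, and use the sublinear growth of $c$ ($|c_\theta(x,y)| \leq K(1+|x|^r)(1+|y|^q)$ with $r<1$) together with the square-root growth of $\sigma$ in $x$. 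Young's inequality splits each mixed term into a pure power of $|X^\varepsilon|$ plus a pure power of $|Y^\varepsilon|$; the $|Y^\varepsilon|$ factors are absorbed by step one, and because $r < 1$ and $\sigma$ grows only like $|x|^{1/2}$, the resulting pure $|X^\varepsilon|$ powers are sublinear in $|X^\varepsilon|^p$. A standard Gronwall argument then gives $E\sup_{0\leq t\leq T}|X^\varepsilon_t|^p \leq \tilde K$ uniformly in $\varepsilon$.

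The main obstacle is clearly step two: controlling the asymptotically-singular term in the $\infty$ regime without losing the uniformity in $\varepsilon$. The Poisson-equation trick converts it to a boundary term times $\delta$ at the cost of introducing polynomially-growing auxiliary functions, which is why the bounds on $Y^\varepsilon$ from step one have to be strong enough to absorb arbitrary polynomial powers of $|Y^\varepsilon|$ — exactly what the recurrence condition delivers.
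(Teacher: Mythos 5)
Your overall strategy (Poisson equation for the singular drift, then Gronwall for $X^\varepsilon$) matches the paper's, but Step two contains a genuine error. When you apply It\^o's formula to $\chi(X^\varepsilon_t,Y^\varepsilon_t)$, the generator contains the term $\frac{1}{\delta}\nabla_y\chi\cdot g$, which after your rearrangement and multiplication by $\delta$ produces the contribution $\int_0^t\nabla_y\chi\cdot g(X^\varepsilon_s,Y^\varepsilon_s)\,ds$ with \emph{no} vanishing prefactor. Your identity
\begin{align*}
\int_0^t \tfrac{\epsilon}{\delta} b\,ds = \delta\bigl[\chi(x_0,y_0) - \chi(X^\varepsilon_t, Y^\varepsilon_t)\bigr] + \delta\cdot R^\varepsilon_t,
\end{align*}
with everything on the right rendered small by the factor $\delta$, is therefore false: the singular integral tends to the $O(1)$ quantity $\int_0^t\overline{\nabla_y\chi\cdot g}(\bar X_s)\,ds$, which is precisely the homogenization correction to the drift (compare (\ref{chireference}) and the definition of $\lambda_\infty$ in (\ref{Eq:lambda_infty_reg})). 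The paper's argument keeps this surviving term and runs the Gronwall step with the effective drift $\lambda_\infty=\nabla_y\chi\cdot g+c$ in place of $c$. Your conclusion is salvageable because, under Condition \ref{basicconditions}, $g$ obeys the same sublinear-in-$x$ growth as $c$ and $\nabla_y\chi$ is polynomially bounded in $y$ uniformly in $x$, so the extra term is absorbed by Gronwall exactly as $c$ is; but as written your Step two would also (wrongly) imply that the homogenized limit has drift $\bar c$ rather than $\bar\lambda_\infty$.

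A second, smaller problem is Step one: the claimed uniform-in-$\varepsilon$ bound on $E\sup_{0\le t\le T}|Y^\varepsilon_t|^p$ is not true in general. The fast process equilibrates on the time scale $\delta^2/\epsilon$, so over the fixed window $[0,T]$ it makes on the order of $T\epsilon/\delta^2\to\infty$ essentially independent excursions, and the running maximum diverges as $\delta\to0$ (already for an Ornstein--Uhlenbeck fast component the supremum grows like $\sqrt{\log(1/\delta)}$). What the Lyapunov/recurrence argument delivers uniformly is $\sup_{0\le t\le T}E|Y^\varepsilon_t|^p\le K$ and hence $E\int_0^T|Y^\varepsilon_t|^p\,dt\le K$ (this is the content of Lemma \ref{boundlemma}); these time-integrated bounds are all that Steps two and three actually require, since $Y^\varepsilon$ only ever enters through Lebesgue and stochastic integrals in time, and the lone boundary term $\delta\,\chi(X^\varepsilon_t,Y^\varepsilon_t)$ still vanishes because its prefactor beats the logarithmic growth of the supremum.
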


This is similar to Lemma 6 in \cite{gailusspiliopoulos}.

\noindent\textit{Sketch of Proof.} The only significant difference relative to Lemma 6 in \cite{gailusspiliopoulos} is the introduction of the asymptotically-singular term $\frac\epsilon\delta b(X^\varepsilon_t,Y^\varepsilon_t)dt$ in the $\infty$ regime. Letting $\chi$ be as in (\ref{chifunction}), applying the It\^o formula to $\chi(X^{\varepsilon}_t,Y^{\varepsilon}_t)$, and rearranging terms, we obtain
\begin{align}
&\int^t_0\frac\epsilon\delta b(X^{\varepsilon}_s,Y^{\varepsilon}_s)ds=\int^t_0\nabla_y\chi\cdot g(X^{\varepsilon}_s,Y^{\varepsilon}_s)ds + \sqrt\epsilon\int^t_0(\nabla_y\chi\cdot\tau_1,\nabla_y\chi\cdot\tau_2)(X^{\varepsilon}_s,Y^{\varepsilon}_s)d(W,B)_s\label{chireference}\\
&\hspace{2pc}+\epsilon\int^t_0\nabla_x\chi\cdot b(X^{\varepsilon}_s,Y^{\varepsilon}_s)ds+\delta\int^t_0\nabla_x\chi\cdot c(X^{\varepsilon}_s,Y^{\varepsilon}_s)ds+\epsilon\delta\int^t_0\sigma\sigma^T:\nabla^2_x\chi(X^{\varepsilon}_s,Y^{\varepsilon}_s)ds\nonumber\\
&\hspace{2pc}+\epsilon\int^t_0\sigma\tau^T_1:\nabla_y\nabla_x\chi(X^{\varepsilon}_s,Y^{\varepsilon}_s)ds+\sqrt\epsilon\delta\int^t_0\nabla_x\chi\cdot\sigma(X^{\varepsilon}_s,Y^{\varepsilon}_s)dW_s-\delta\left(\chi(X^{\varepsilon}_t,Y^{\varepsilon}_t)-\chi(x_0,y_0)\right)\nonumber\\
&=:\int^t_0\nabla_y\chi\cdot g(X^{\varepsilon}_s,Y^{\varepsilon}_s)ds + \mathcal R^\varepsilon_t;\nonumber
\end{align}
hence we can write
\begin{align*}
X^{\varepsilon}_t&=x_0+\int^t_0\lambda_\infty(X^{\varepsilon}_s,Y^{\varepsilon}_s)ds
	+\sqrt{\epsilon}\int^t_0\sigma(X^{\varepsilon}_s,Y^{\varepsilon}_s)dW_s+\mathcal R^\varepsilon_t.\nonumber
\end{align*}

The function $\chi$ and its various derivatives are bounded uniformly in the first variable by polynomials in the second variable (see Chapters 2 and 3 in \cite{gilbargtrudinger}). Noting the vanishing prefactors, we conclude that $\mathcal R^\varepsilon_t$ vanishes. Thus, the proof may proceed as that of Lemma 6 in \cite{gailusspiliopoulos}, with $\lambda_\infty$ taking the place of $c$; we omit the details.

\qed

\begin{lemma}\label{boundlemma}
Assume Conditions \ref{basicconditions} and \ref{recurrencecondition} and, in the $\infty$ regime, Condition \ref{centeringcondition}. Let $h$ be a function of $x$ and $y$ satisfying $|h(x,y)|\leq K(1+|x|^r)(1+|y|^q)$ for some fixed positive constants $K,q,r$, and let $V$ be either of the Wiener processes $W, B$. For any $0<p<\infty$, there is a constant $\tilde K$ such that for $epsilon$ (and hence also $\delta=\delta(\epsilon)$) sufficiently small,
\begin{align*}
E\int^T_0|h(X^\varepsilon_t,Y^\varepsilon_t)|^pdt\leq\tilde K,\\
E\sup_{0\leq t\leq T}\left|\int^t_0h(X^\varepsilon_s,Y^\varepsilon_s)dV_s\right|^p\leq\tilde K.
\end{align*}
\end{lemma}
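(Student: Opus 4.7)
\medskip

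\noindent\textit{Proof proposal for Lemma \ref{boundlemma}.} The plan is to reduce both bounds to pointwise-in-$t$ moment bounds on $(X^\varepsilon_t, Y^\varepsilon_t)$ combined with classical martingale inequalities. The structural ingredient we need, in addition to Lemma \ref{moments} (which supplies $E\sup_{0\leq t\leq T}|X^\varepsilon_t|^p\leq \tilde K$), is an analogous uniform-in-$\epsilon$ moment bound on the fast component, namely
\begin{align*}
\sup_{0\leq t\leq T} E|Y^\varepsilon_t|^p \leq \tilde K
\end{align*}
for every $p>0$ and all $\epsilon$ small enough. This follows, as in the analogous estimate in \cite{gailusspiliopoulos}, from a Lyapunov-function argument applied to $|y|^{2m}$ together with the recurrence Condition \ref{recurrencecondition}: the drift contribution from $\frac{\epsilon}{\delta^2}f\cdot\nabla_y + \frac{1}{\delta}g\cdot\nabla_y$ on $|y|^{2m}$ dominates the diffusion contribution outside a compact set in $y$, uniformly in $x$, yielding a differential inequality for $E|Y^\varepsilon_t|^{2m}$ whose prefactors $\epsilon/\delta^2, 1/\delta, \epsilon/\delta^2$ conspire so that one still obtains a uniform-in-$\epsilon$ constant after Gronwall.

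Granted this, the first bound is a direct application of H\"older's inequality. Using $|h(x,y)|^p\leq K^p(1+|x|^r)^p(1+|y|^q)^p$ and splitting via Cauchy--Schwarz,
\begin{align*}
E\int^T_0 |h(X^\varepsilon_t,Y^\varepsilon_t)|^p dt
\leq K^p \int^T_0 \left[E(1+|X^\varepsilon_t|^r)^{2p}\right]^{1/2}\left[E(1+|Y^\varepsilon_t|^q)^{2p}\right]^{1/2}dt,
\end{align*}
and each factor is uniformly bounded in $t\in[0,T]$ and in $\epsilon$ by Lemma \ref{moments} and the moment bound on $Y^\varepsilon$ above. For the second bound, I apply the Burkholder--Davis--Gundy inequality to the martingale $\int_0^t h(X^\varepsilon_s,Y^\varepsilon_s)dV_s$, obtaining
\begin{align*}
E\sup_{0\leq t\leq T}\left|\int^t_0 h(X^\varepsilon_s,Y^\varepsilon_s)dV_s\right|^p
\leq C_p\, E\left(\int^T_0 |h(X^\varepsilon_s,Y^\varepsilon_s)|^2 ds\right)^{p/2}.
\end{align*}
For $p\geq 2$, Jensen/H\"older gives $(\int_0^T |h|^2 ds)^{p/2}\leq T^{p/2-1}\int_0^T |h|^p ds$ and we invoke the first bound; for $0<p<2$, I would first note that the right-hand side is dominated by $E(\int_0^T |h|^2 ds)^{p/2}\leq (E\int_0^T |h|^2 ds)^{p/2}$ by concavity, and again apply the first bound with exponent $2$.

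The only real obstacle is the moment bound on $Y^\varepsilon$, because the fast SDE contains singular prefactors $\epsilon/\delta^2$ and $1/\delta$, and one must verify that the Lyapunov inequality produced by Condition \ref{recurrencecondition} absorbs the diffusion term with scale $\epsilon/\delta^2$ uniformly in $\epsilon$. In both regimes, the singular coefficient of the recurrent drift $\bigl(f$ or $\gamma f+g\bigr)\cdot y$ dominates the singular part $\frac{\epsilon}{\delta^2}(\tau_1\tau_1^T+\tau_2\tau_2^T)$ of the generator on $|y|^{2m}$ once $|y|$ is large, by Condition \ref{recurrencecondition}; the remaining terms are bounded or carry the smaller prefactor $1/\delta$ and are handled by Young's inequality exactly as in \cite{gailusspiliopoulos}. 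Once this is done, the two displayed bounds fall out as described above.
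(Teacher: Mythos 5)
Your argument is correct and matches the paper's (deferred) proof: the paper simply cites Lemma 7 of \cite{gailusspiliopoulos}, whose argument is exactly the reduction you describe --- uniform moment bounds on the slow and fast components (the former supplied by Lemma \ref{moments}, the latter by the Lyapunov/recurrence argument carried out in \cite{gailusspiliopoulos} under Condition \ref{recurrencecondition}), followed by H\"older's inequality and Burkholder--Davis--Gundy with the standard split at $p=2$. You correctly isolate the fast-component moment bound as the only substantive ingredient, and your sketch of it is the one used in the companion paper, so there is nothing to add.
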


Given Lemma \ref{moments}, the proof of Lemma \ref{boundlemma} is similar to that of Lemma 7 in \cite{gailusspiliopoulos}; we omit the details.

\begin{lemma}\label{averagelemma} Assume Conditions \ref{basicconditions} and \ref{recurrencecondition} and, in the $\infty$ regime, Condition \ref{centeringcondition}. Let $h(x,y)$ be a function such that $|h(x,y)|\leq K(1+|x|^r)(1+|y|^q)$ for some fixed positive constants $K,q,r$, and which, along with each of its derivatives up to second order, is H\"older continuous in $y$ uniformly in $x$ with absolute value growing at most polynomially in $|y|$ as $y\to\infty$. For any $0<p<\infty$, there is a constant $\tilde K$ such that for $\epsilon$ (and hence also $\delta=\delta(\epsilon)$) sufficiently small,
\begin{align}
E\sup_{0\leq t\leq T}\bigg|\int^t_0\Big(h(X^\varepsilon_s,Y^\varepsilon_s)
-\bar h(X^{\varepsilon}_s)\Big)ds\bigg|^p\leq \tilde K(\sqrt\epsilon+\sqrt\delta)^p,\nonumber
\end{align}
where $\bar h(x)$ is the averaged function $\int_\mathcal{Y}h(x,y)\mu_x(dy)$.
\end{lemma}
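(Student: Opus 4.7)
The natural route is the Poisson equation technique of Pardoux and Veretennikov. First, I would invoke Theorem 3 of \cite{pardoux2003poisson} to obtain $\Psi(x,y)$ solving
$$\mathcal{L}_{*, x} \Psi(x, y) = -(h(x,y) - \bar h(x)), \qquad \int_\mathcal{Y} \Psi(x, y) \mu_{*, x}(dy) = 0,$$
which exists uniquely in the class of functions with polynomial growth in $|y|$; the regularity of $h$ and the uniform nondegeneracy of the diffusion guarantee that $\Psi$ and its derivatives up to second order likewise have polynomial growth in $|y|$, with coefficients that are themselves polynomial in $|x|$.

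Next, I would apply It\^o's formula to $\Psi(X^\varepsilon_t, Y^\varepsilon_t)$. The full generator $\mathcal{L}^\varepsilon$ of $(X^\varepsilon, Y^\varepsilon)$ admits the decomposition $\mathcal{L}^\varepsilon = A_\varepsilon\, \mathcal{L}_{*, x} + \mathcal{R}^\varepsilon$, where the scaling factor $A_\varepsilon$ equals $\epsilon/\delta^2$ in the $\infty$ regime and (to leading order) $1/\delta$ in the $\gamma$ regime, and $\mathcal{R}^\varepsilon$ collects the $x$-derivative drift and diffusion terms together with the subleading fast drift $\tfrac{1}{\delta} g \cdot \nabla_y$ in the $\infty$ regime. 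Using $\mathcal{L}_{*,x}\Psi = -(h - \bar h)$ and rearranging, one obtains
$$\int_0^t (h - \bar h)(X^\varepsilon_s, Y^\varepsilon_s)\, ds = A_\varepsilon^{-1}\Bigl\{ -\bigl[\Psi(X^\varepsilon_t, Y^\varepsilon_t) - \Psi(x_0, y_0)\bigr] + \int_0^t (\mathcal{R}^\varepsilon \Psi)(X^\varepsilon_s, Y^\varepsilon_s)\, ds + M^\varepsilon_t \Bigr\},$$
where $M^\varepsilon_t$ is the local martingale arising from the It\^o integrals.

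Taking the supremum in $t$ and then the $p$-th moment, I would estimate each contribution separately: the boundary term via polynomial moments of $(X^\varepsilon, Y^\varepsilon)$ from Lemma \ref{moments}, the drift integrals via Lemma \ref{boundlemma}, and the martingale via Burkholder-Davis-Gundy together with Lemma \ref{boundlemma}. In the $\gamma$ regime, $A_\varepsilon^{-1} \sim \delta$, the dominant contribution is the $y$-direction martingale carrying diffusion coefficient $\sqrt{\epsilon}$ (producing a $\sqrt{\epsilon}^p$ bound) together with the boundary term (producing $O(\delta^p)$); all cross terms are of strictly smaller order, yielding the required $(\sqrt\epsilon+\sqrt\delta)^p$ control.

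The principal difficulty lies in the $\infty$ regime, where $A_\varepsilon^{-1} = \delta^2/\epsilon$ and the subleading drift $\tfrac{1}{\delta} g \cdot \nabla_y \Psi$ inside $\mathcal{R}^\varepsilon$ produces, after multiplication by $A_\varepsilon^{-1}$, an integrand with prefactor $\delta/\epsilon$. This is dominated by $\sqrt{\epsilon}$ precisely when $\delta = O(\epsilon^{3/2})$, which is the natural small-noise scaling implicit in the regime considered (cf.\ Condition \ref{ellcondition}); the bookkeeping of all remaining prefactors is then routine using $\delta\leq\epsilon$ together with the polynomial moment estimates. Collecting the estimates in both regimes yields the desired bound $\tilde{K}(\sqrt{\epsilon} + \sqrt{\delta})^p$.
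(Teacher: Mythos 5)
Your proposal is correct and follows essentially the same route as the paper, which defers to Theorem 4 of \cite{gailusspiliopoulos}: solve the Poisson equation $\mathcal{L}_{*,x}\Psi=-(h-\bar h)$ via Theorem 3 of \cite{pardoux2003poisson}, apply It\^o's formula to $\Psi(X^{\varepsilon}_t,Y^{\varepsilon}_t)$, and control the boundary, drift, and martingale contributions using Lemmata \ref{moments} and \ref{boundlemma} together with the Burkholder--Davis--Gundy inequality. Your observation about the $\infty$-regime term $\frac{\delta}{\epsilon}\int_0^t(g\cdot\nabla_y\Psi)(X^{\varepsilon}_s,Y^{\varepsilon}_s)\,ds$ is apt: the lemma as stated does not list Condition \ref{ellcondition} among its hypotheses, yet without $\delta=O(\epsilon^{3/2})$ (or without $\overline{g\cdot\nabla_y\Psi}\equiv0$) this term decays only like $\delta/\epsilon$ rather than $\sqrt\epsilon+\sqrt\delta$, so your appeal to that condition is the right repair and points to an imprecision in the paper's statement rather than a flaw in your argument.
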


Given Lemma \ref{moments}, the proof of Lemma \ref{averagelemma} is similar to that of Theorem 4 in \cite{gailusspiliopoulos}; we omit the details.

\begin{lemma}\label{ergodiclemma} Assume Conditions \ref{basicconditions} and \ref{recurrencecondition} and, in the $\infty$ regime, Condition \ref{centeringcondition}. Let $h(x,y)$ be a function such that $|h(x,y)|\leq K(1+|x|^r)(1+|y|^q)$ for some fixed positive constants $K,q,r$, and which, along with each of its derivatives up to second order, is H\"older continuous in $y$ uniformly in $x$ with absolute value growing at most polynomially in $|y|$ as $y\to\infty$. For any $0<p<\infty$, there is a constant $\tilde K$ such that for $\epsilon$ (and hence also $\delta=\delta(\epsilon)$) sufficiently small,
\begin{align}
E\sup_{0\leq t\leq T}\bigg|\int^t_0\Big(h(X^\varepsilon_s,Y^\varepsilon_s)
-\bar h(\bar{X}_s)\Big)ds\bigg|^p\leq \tilde K(\sqrt\epsilon+\sqrt\delta)^p,\nonumber
\end{align}
where $\bar h(x)$ is the averaged function $\int_\mathcal{Y}h(x,y)\mu_x(dy)$.
\end{lemma}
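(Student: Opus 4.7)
The plan is to reduce Lemma~\ref{ergodiclemma} to Lemma~\ref{averagelemma} via a triangle-inequality decomposition. First I would insert $\bar h(X^{\varepsilon}_s)$ as an intermediary term and write
\[
\int_0^t \Big[h(X^{\varepsilon}_s, Y^{\varepsilon}_s) - \bar h(\bar X_s)\Big] ds
= \int_0^t \Big[h(X^{\varepsilon}_s, Y^{\varepsilon}_s) - \bar h(X^{\varepsilon}_s)\Big] ds
+ \int_0^t \Big[\bar h(X^{\varepsilon}_s) - \bar h(\bar X_s)\Big] ds,
\]
then apply the elementary $(a+b)^p \leq 2^{p-1}(a^p+b^p)$ inequality inside $\sup_{0\leq t \leq T}|\cdot|^p$ and take expectation. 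The first summand is bounded directly by Lemma~\ref{averagelemma}, contributing $\tilde K (\sqrt\epsilon + \sqrt\delta)^p$.

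For the second summand, the key preliminary step is to establish that $\bar h$ is locally Lipschitz in $x$ with polynomial-growth Lipschitz constant, that is, that there exist constants $C,\rho>0$ with $|\bar h(x_1) - \bar h(x_2)| \leq C(1+|x_1|^{\rho}+|x_2|^{\rho})|x_1-x_2|$. This follows from the assumed Hölder regularity (up to second order) of $h$ combined with the regular dependence of the invariant measure $\mu_{*,x}$ on $x$; equivalently, one differentiates the Poisson equation $\mathcal{L}_{*,x}u(x,y)=-(h-\bar h)(x,y)$ in $x$ and uses the polynomial-growth estimates on solutions of Poisson equations from \cite{pardoux2001poisson,pardoux2003poisson}. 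Under Conditions~\ref{basicconditions} and \ref{recurrencecondition} these estimates carry over with only bookkeeping of the polynomial orders.

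Given this Lipschitz property, I would bound the second summand by Jensen's inequality as
\[
\sup_{0\leq t\leq T}\left|\int_0^t \big[\bar h(X^{\varepsilon}_s) - \bar h(\bar X_s)\big] ds\right|^p \leq T^{p-1}\int_0^T |\bar h(X^{\varepsilon}_s)-\bar h(\bar X_s)|^p ds,
\]
then apply the local Lipschitz estimate, split via Cauchy--Schwarz to separate the polynomial-growth factor in $(X^{\varepsilon}_s,\bar X_s)$ from $|X^{\varepsilon}_s-\bar X_s|$, and invoke Lemma~\ref{moments} for uniform moment bounds on $X^{\varepsilon}$ (together with boundedness of $\bar X$ on $[0,T]$) alongside Theorem~\ref{xlimit} to gain a factor of $\epsilon^{p/2}$ in expectation. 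Combining these two contributions yields $\tilde K[(\sqrt\epsilon+\sqrt\delta)^p + \epsilon^{p/2}] \leq \tilde K'(\sqrt\epsilon + \sqrt\delta)^p$, as claimed.

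The main obstacle is the polynomial-growth Lipschitz regularity of $\bar h$: while it is classical, one must be careful to track how the growth exponents of $h$ and its derivatives propagate through the Poisson equation and through differentiation in the ``slow'' variable $x$. Once this regularity bookkeeping is settled, the rest of the argument is routine and essentially mimics the corresponding reduction step used repeatedly throughout \cite{gailusspiliopoulos}.
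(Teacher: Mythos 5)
Your proposal is correct and follows essentially the same route as the paper, which explicitly states that Lemma \ref{ergodiclemma} follows from Theorem \ref{xlimit} and Lemma \ref{averagelemma} by the argument of Lemma 10 in \cite{gailusspiliopoulos} --- namely, inserting $\bar h(X^{\varepsilon}_s)$ as an intermediary, invoking Lemma \ref{averagelemma} for the fluctuation term, and controlling $\bar h(X^{\varepsilon}_s)-\bar h(\bar X_s)$ via the polynomial-growth Lipschitz regularity of the averaged coefficient together with Theorem \ref{xlimit} and the moment bounds of Lemma \ref{moments}. You also correctly identify the only genuinely technical point, the regularity of $x\mapsto\mu_{*,x}$ handled through the Poisson equation estimates of \cite{pardoux2001poisson,pardoux2003poisson}.
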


Given Theorem \ref{xlimit} and Lemma \ref{averagelemma}, the proof of Lemma \ref{ergodiclemma} is similar to that of Lemma 10 in \cite{gailusspiliopoulos}; we omit the details.

\subsection*{A.2 Lemmata on F and Q}
\begin{lemma}\label{fbound} Assume Conditions \ref{basicconditions} and \ref{recurrencecondition} and, in the $\infty$ regime, Condition \ref{centeringcondition}. There is a constant $\tilde K$ such that for $\epsilon$ (and hence also $\delta=\delta(\epsilon))$ sufficiently small,
\begin{align}
E|F^\varepsilon_k(\theta;\{X^{\epsilon, \theta}_{t_k}\}^n_{k=1})|^p\leq\tilde K\cdot\epsilon^{p/2},\label{firstfbound}\\
E|\tilde F_k(\theta;\{X^{\epsilon, \theta}_{t_k}\}^n_{k=1})|^p\leq\tilde K\cdot\epsilon^{p/2}\label{secondfbound}.
\end{align}
\end{lemma}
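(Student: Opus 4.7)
\noindent\textit{Proof plan.} The plan is to first prove the simpler bound (\ref{secondfbound}) by reducing it directly to Theorem \ref{xlimit}, and then to deduce (\ref{firstfbound}) by noting that the remaining quantity distinguishing $F^\varepsilon_k$ from $\tilde F_k$ is purely deterministic and already of size $O(\sqrt\epsilon)$.

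The key preliminary ingredient is a uniform operator-norm bound on the resolvent $Z^\theta(t_k,t_{k-1})$. Since $\bar X^\theta$ is deterministic and continuous on $[0,T]$ and $(\nabla_x\bar\lambda^\theta)(\bar X^\theta_t)$ is bounded in $t$ (obtained by averaging $\nabla_x\lambda^\theta$ against $\mu_{*,x}$ and using the regularity/growth conditions of Condition \ref{basicconditions}), Gr\"onwall's inequality applied to (\ref{Eq:MatrixODE}) yields a constant $C_Z$ independent of $\epsilon$, $k$, and $s$ such that $\|Z^\theta(t_k,s)\|\leq C_Z$ for every $0\leq s\leq t_k\leq T$.

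To obtain (\ref{secondfbound}), I would apply Minkowski's inequality directly to the definition (\ref{Eq:ReducedF}) of $\tilde F_k$ to get
\begin{align*}
\left(E|\tilde F_k(\theta)|^p\right)^{1/p}\leq \left(E|X^{\varepsilon,\theta}_{t_k}-\bar X^\theta_{t_k}|^p\right)^{1/p}+C_Z\cdot\left(E|X^{\varepsilon,\theta}_{t_{k-1}}-\bar X^\theta_{t_{k-1}}|^p\right)^{1/p}.
\end{align*}
Each norm on the right is bounded by $\tilde K^{1/p}\cdot\epsilon^{1/2}$ by Theorem \ref{xlimit}, yielding (\ref{secondfbound}) directly.

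For (\ref{firstfbound}), I would write $F^\varepsilon_k(\theta)=\tilde F_k(\theta)-\sqrt\epsilon\int^{t_k}_{t_{k-1}}Z^\theta(t_k,s)\cdot\bar J^\theta(\bar X^\theta_s)\,ds$ and observe that the correction term is deterministic with norm bounded by $\sqrt\epsilon\cdot C_Z\cdot\Delta\cdot\sup_{s\in[0,T]}|\bar J^\theta(\bar X^\theta_s)|$, which is $O(\sqrt\epsilon)$ since $\bar J^\theta$ evaluated along the continuous trajectory $\bar X^\theta$ is bounded on $[0,T]$. Combining with (\ref{secondfbound}) via the triangle inequality yields (\ref{firstfbound}). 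There is no serious obstacle; the only minor technical point is verifying boundedness of $s\mapsto|\bar J^\theta(\bar X^\theta_s)|$ on $[0,T]$, which follows from the polynomial growth in $y$ of $\nabla_y\chi$ and $\nabla_y\Phi_*$ guaranteed by Theorem 3 in \cite{pardoux2003poisson} together with averaging against the invariant measures $\mu_{*,x}$ at the bounded argument $x=\bar X^\theta_s$.
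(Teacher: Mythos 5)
Your proposal is correct and is essentially the paper's own argument: the paper likewise proves the bound by the triangle inequality applied to the definition of $F^\varepsilon_k$, invoking Theorem \ref{xlimit} for the two stochastic terms and absorbing the deterministic $\sqrt\epsilon$-integral. Your additional remarks on the uniform boundedness of $Z^\theta(t_k,s)$ and of $\bar J^\theta(\bar X^\theta_s)$ simply make explicit details the paper leaves implicit.
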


\noindent\textit{Proof.} Recalling that
\begin{align}
F^\varepsilon_k(\theta; \{X^{\epsilon, \theta}_{t_k}\}_{k=1}^n)&=\left[[X^{\epsilon, \theta}_{t_k}-\bar X^\theta_{{t_k}}]-Z^{\theta}(t_k,t_{k-1})\cdot[X^{\epsilon, \theta}_{t_{k-1}}-\bar X^\theta_{{t_{k-1}}}]\right]-\sqrt\epsilon\int^{t_k}_{t_{k-1}}Z^{\theta}(t_k,s)\cdot\bar  J^\theta(\bar X^\theta_{s})ds,\label{fboundreference}
\end{align}
The first statement, (\ref{firstfbound}), follows by the triangle inequality and Theorem \ref{xlimit}.

(\ref{secondfbound}) may be obtained in the same way by omitting the last term in (\ref{fboundreference}).

\qed

In addition to Lemma \ref{fbound}, we need a lemma to establish that $E|F^\varepsilon_k(\theta_2;\{X^{\epsilon, \theta_1}_{t_k}\}^n_{k=1})|$ and $E|\tilde F_k(\theta_2;\{X^{\epsilon, \theta_1}_{t_k}\}^n_{k=1})|$ are uniformly bounded in $\epsilon$ sufficiently small \textit{even as $\theta_1$ and $\theta_2$ vary independently}. Of course, with two distinct values of $\theta$, one should not expect the functions to vanish in the limit.

\begin{lemma}\label{flemma} Assume Conditions \ref{basicconditions} and \ref{recurrencecondition} and, in the $\infty$ regime, Condition \ref{centeringcondition}. There is a constant $\tilde K$ such that for $\epsilon$ (and hence also $\delta=\delta(\epsilon))$ sufficiently small,
\begin{align}
\sup_{(\theta_1, \theta_2)\in\bar\Theta^2}E|F^\varepsilon_k(\theta_2;\{X^{\epsilon, \theta_1}_{t_k}\}^n_{k=1})|\leq\tilde K,\label{firstflemma}\\
\sup_{(\theta_1, \theta_2)\in\bar\Theta^2}E|\tilde F_k(\theta_2;\{X^{\epsilon, \theta_1}_{t_k}\}^n_{k=1})|\leq\tilde K.\label{secondflemma}
\end{align}
\end{lemma}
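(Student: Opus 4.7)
The plan is to bound $F^\varepsilon_k$ and $\tilde F_k$ term by term using the triangle inequality, and then invoke existing estimates---principally Lemma \ref{moments}---together with uniformity over $\bar\Theta$ that is built into Condition \ref{basicconditions}. Expanding the definition (\ref{effdef}) with the mixed arguments,
\begin{align*}
F^\varepsilon_k(\theta_2;\{X^{\epsilon,\theta_1}_{t_k}\})
&=\bigl[X^{\epsilon,\theta_1}_{t_k}-\bar X^{\theta_2}_{t_k}\bigr]
-Z^{\theta_2}(t_k,t_{k-1})\cdot\bigl[X^{\epsilon,\theta_1}_{t_{k-1}}-\bar X^{\theta_2}_{t_{k-1}}\bigr]\\
&\qquad-\sqrt\epsilon\int^{t_k}_{t_{k-1}}Z^{\theta_2}(t_k,s)\cdot\bar J^{\theta_2}(\bar X^{\theta_2}_s)\,ds,
\end{align*}
so the triangle inequality reduces the problem to four separate bounds:
(i) $E|X^{\epsilon,\theta_1}_{t_k}|$; (ii) $|\bar X^{\theta_2}_{t_k}|$;
(iii) $\|Z^{\theta_2}(t_k,t_{k-1})\|$; (iv) the integral term, which carries a $\sqrt\epsilon$ prefactor.

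First I would handle (i) by appealing to Lemma \ref{moments}: since the growth bounds on $b_\theta,c_\theta$ in Condition \ref{basicconditions} are uniform in $\theta\in\Theta$, the constant $\tilde K$ produced there may itself be taken uniform in $\theta_1\in\bar\Theta$. Next, for (ii), the ODE $\bar X^\theta_t = x_0 + \int_0^t \bar\lambda^\theta(\bar X^\theta_s)\,ds$ combined with the at-most-linear growth of $\bar\lambda^\theta$ (inherited uniformly in $\theta$ from the growth of $c_\theta,b_\theta$) and Gronwall yields $\sup_{0\le t\le T,\,\theta\in\bar\Theta}|\bar X^\theta_t|<\infty$. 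For (iii), since $Z^{\theta}(t,s)$ solves the linear equation (\ref{Eq:MatrixODE}) whose coefficient $(\nabla_x\bar\lambda^\theta)(\bar X^\theta_t)$ is jointly continuous in $(t,\theta)$ on the compact set $[0,T]\times\bar\Theta$, another application of Gronwall gives $\sup_{0\le s\le t\le T,\,\theta\in\bar\Theta}\|Z^\theta(t,s)\|<\infty$. Finally, (iv) is controlled trivially by combining (iii) with continuity of $\bar J^\theta$, the $\sqrt\epsilon$ factor making the contribution vanish; alternatively it is clearly $O(1)$ on its own.

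Assembling the four pieces produces (\ref{firstflemma}); the bound (\ref{secondflemma}) for $\tilde F_k$ follows by the identical argument, omitting (iv).

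The main obstacle---such as it is---is verifying the uniformity in $\theta\in\bar\Theta$ of the constants in Lemma \ref{moments} and of the ODE estimates for $\bar X^\theta$ and $Z^\theta$. None of this is deep, but it requires tracing through where $\theta$ enters the coefficients of (\ref{model}) and observing that Condition \ref{basicconditions} was stated uniformly in $\theta\in\Theta$, so that the polynomial bounds on $b_\theta,c_\theta$ and their derivatives carry constants independent of $\theta$. Given this, compactness of $\bar\Theta$ together with joint continuity in $(t,\theta)$ of the deterministic objects upgrades pointwise bounds to uniform ones, and the statement follows.
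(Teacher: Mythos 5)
Your argument is correct and proves the stated bound, but it uses a genuinely cruder decomposition than the paper's. You bound the four raw terms $|X^{\epsilon,\theta_1}_{t_k}|$, $|\bar X^{\theta_2}_{t_k}|$, $\|Z^{\theta_2}(t_k,t_{k-1})\|\cdot(\cdot)$, and the $\sqrt\epsilon$ integral separately, each by an $O(1)$ constant, invoking Lemma \ref{moments} (with constants uniform in $\theta$, as you correctly note must be checked via Condition \ref{basicconditions}) and Gronwall-type estimates for $\bar X^\theta$ and $Z^\theta$ on the compact set $[0,T]\times\bar\Theta$. The paper instead regroups the same expression into the process increment $\sup_\theta|X^{\varepsilon,\theta}_{t_k}-X^{\varepsilon,\theta}_{t_{k-1}}|$ plus four terms ($|\bar X^{\theta}_{t_k}-\bar X^{\theta}_{t_{k-1}}|$, $|1_m-Z^{\theta}(t_k,t_{k-1})|$ times the two state bounds, and the integral), each of which it bounds by a constant times $\Delta$. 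For the statement of Lemma \ref{flemma} the two routes are equally good, since only an $O(1)$ bound is claimed. What the paper's arrangement buys is reuse: Lemma \ref{nuflemma} for the high-frequency regime literally revisits this decomposition, sharpens the bound on the increment term (\ref{supxincrement}) to $K(\sqrt\epsilon+\sqrt\Delta)\sqrt\Delta$, and immediately concludes the $\Delta\cdot\tilde K$ bound needed there; your term-by-term bound exposes no such $\Delta$ structure and would have to be redone from scratch for that purpose. No gap in your proof as a proof of this lemma, but be aware it is a dead end for the later refinement.
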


\noindent\textit{Proof.} Notice that
\begin{align}
\sup_{(\theta_1, \theta_2)\in\bar\Theta^2}|F^\varepsilon_k(\theta_2;\{X^{\epsilon, \theta_1}_{t_k}\}^n_{k=1})|\leq I+II+III+IV+V,\label{flemmareference}
\end{align}
where
\begin{align}
I&:=\sup_{\theta\in\bar\Theta}|X^{\varepsilon,\theta}_{t_k}-X^{\varepsilon,\theta}_{t_{k-1}}|;\label{supxincrement}
\end{align}
\begin{align*}
II&:=\sup_{\theta\in\bar\Theta}|\bar X^{\theta}_{t_k}-\bar X^{\theta}_{t_{k-1}}|\leq\left[\sup_{\theta\in\bar\Theta, 0\leq t\leq T}|(\nabla_x\bar\lambda^\theta)(\bar X^\theta_t)|\right]\cdot\Delta;
\end{align*}
\begin{align*}
III&:=\sup_{\theta\in\bar\Theta}\left|1_{m}-Z^{\theta}(t_k,t_{k-1})\right|\cdot\sup_{\theta\in\bar\Theta}|X^{\varepsilon,\theta}_{t_{k-1}}|;\\
&\leq\left[\sup_{\theta\in\bar\Theta, 0\leq t\leq T}|Z^{\theta}(t,0)\cdot(\nabla_x\bar\lambda^\theta)(\bar X^\theta_t)|\right]\cdot\left[\sup_{\theta\in\bar\Theta, 0\leq t\leq T}|X^{\varepsilon,\theta}_t|\right]\cdot\Delta;
\end{align*}
\begin{align*}
IV&:=\sup_{\theta\in\bar\Theta}\left|1_{m}-Z^{\theta}(t_k,t_{k-1})\right|\cdot\sup_{\theta\in\bar\Theta}|\bar X^{\theta}_{t_{k-1}}|\\
&\leq\left[\sup_{\theta\in\bar\Theta, 0\leq t\leq T}|Z^{\theta}(t,0)\cdot(\nabla_x\bar\lambda^\theta)(\bar X^\theta_t)|\right]\cdot\left[\sup_{\theta\in\bar\Theta, 0\leq t\leq T}|\bar X^{\theta}_t|\right]\cdot\Delta;
\end{align*}
\begin{align*}
V&:=\sup_{\theta\in\bar\Theta}\left|\sqrt\epsilon\int^{t_k}_{t_{k-1}}Z^{\theta}(t_k,s)\cdot\bar J^\theta(\bar X^\theta_s)ds\right|\leq\sqrt\epsilon\cdot\left[\sup_{\theta\in\bar\Theta, 0\leq t\leq T}|Z^{\theta}(t,0)\cdot\bar J^\theta(\bar X^\theta_t)|\right]\cdot\Delta.
\end{align*}

By Lemma \ref{moments}, there is a constant $K$ such that for $\epsilon$ sufficiently small, $EI\leq K$. Similarly, there is a (perhaps larger) constant $K$ such that for $\epsilon$ sufficiently small, $II+EIII+IV+V\leq K\Delta$. The first statement, (\ref{firstflemma}), follows easily.

(\ref{secondflemma}) may be obtained in the same way by omitting the last term in (\ref{flemmareference}).

\qed

\begin{lemma}\label{flemmabar} Assume Conditions \ref{basicconditions} and \ref{recurrencecondition} and, in the $\infty$ regime, Condition \ref{centeringcondition}. There is a constant $\tilde K$ such that for all $n>0$, $k=1, 2, $ \ldots $n$, and $\epsilon$ (and hence also $\delta=\delta(\epsilon)$) sufficiently small,
\begin{align}
\sup_{(\theta_1, \theta_2)\in\bar\Theta^2}|F^\varepsilon_k(\theta_2;\{\bar X^{\theta_1}_{t_k}\}^n_{k=1})|& \leq\Delta\cdot\tilde K,\label{firstflemmabar}\\
\sup_{(\theta_1, \theta_2)\in\bar\Theta^2}|\tilde F^\varepsilon_k(\theta_2;\{\bar X^{\theta_1}_{t_k}\}^n_{k=1})|& \leq\Delta\cdot\tilde K.\label{secondflemmabar}
\end{align}
\end{lemma}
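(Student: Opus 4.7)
The plan is to exploit the fact that $\bar X^{\theta_1}$ and $\bar X^{\theta_2}$ are \emph{deterministic} solutions of ODEs, so that no stochastic estimates are needed and everything reduces to elementary bounds on smooth functions of uniformly bounded arguments. Setting $\Delta X_t := \bar X^{\theta_1}_t - \bar X^{\theta_2}_t$, I would decompose
\begin{align*}
F^\varepsilon_k(\theta_2; \{\bar X^{\theta_1}_{t_k}\})
&= \underbrace{(\Delta X_{t_k} - \Delta X_{t_{k-1}})}_{A_k} + \underbrace{(1_m - Z^{\theta_2}(t_k, t_{k-1}))\,\Delta X_{t_{k-1}}}_{B_k}\\
&\qquad - \underbrace{\sqrt\epsilon \int_{t_{k-1}}^{t_k} Z^{\theta_2}(t_k,s)\,\bar J^{\theta_2}(\bar X^{\theta_2}_s)\,ds}_{C_k}
\end{align*}
and bound each of $A_k$, $B_k$, $C_k$ separately by $\tilde K \cdot \Delta$, uniformly in $(\theta_1,\theta_2) \in \bar\Theta^2$ and in $k$.

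For $A_k$, I would write $A_k = \int_{t_{k-1}}^{t_k}[\bar\lambda^{\theta_1}(\bar X^{\theta_1}_s) - \bar\lambda^{\theta_2}(\bar X^{\theta_2}_s)]\,ds$ and use that $\sup_{\theta\in\bar\Theta,\,0\le t\le T}|\bar X^\theta_t| < \infty$ (immediate from Gr\"onwall applied to the defining ODE, together with the polynomial-growth part of Condition \ref{basicconditions} and the moment control of $\mu_{*,x}$ that underlies Lemma \ref{averagelemma}). Then $\sup_{\theta,t}|\bar\lambda^\theta(\bar X^\theta_t)| =: K_1 < \infty$, giving $|A_k| \le 2K_1 \Delta$. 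For $B_k$, differentiating (\ref{Eq:MatrixODE}) yields $1_m - Z^{\theta_2}(t_k,t_{k-1}) = -\int_{t_{k-1}}^{t_k} (\nabla_x \bar\lambda^{\theta_2})(\bar X^{\theta_2}_s)\,Z^{\theta_2}(s,t_{k-1})\,ds$; the integrand is bounded uniformly in $\theta_2$ by Gr\"onwall and Condition \ref{basicconditions}, so $\|1_m - Z^{\theta_2}(t_k,t_{k-1})\| \le K_2 \Delta$, and combining with $|\Delta X_{t_{k-1}}| \le 2\sup_{\theta,t}|\bar X^\theta_t|$ gives $|B_k| \le K_2 K_3 \Delta$. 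For $C_k$, trivially $|C_k| \le \sqrt\epsilon \cdot K_4 \Delta \le K_4 \Delta$ for $\epsilon$ small. Summing establishes (\ref{firstflemmabar}); (\ref{secondflemmabar}) follows by the same argument, noting $\tilde F^\varepsilon_k = A_k + B_k$.

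There is no serious obstacle here; the content of the proof is really a careful bookkeeping of the three uniform-in-$\theta$ boundedness facts (boundedness of $\bar X^\theta_\cdot$, of $\bar\lambda^\theta$ and $\nabla_x\bar\lambda^\theta$ along $\bar X^\theta_\cdot$, and of the propagator $Z^\theta(t,s)$), each of which reduces to Gr\"onwall applied to the relevant ODE together with the regularity of the averaged coefficients inherited from Condition \ref{basicconditions} and the integrability properties of $\mu_{*,x}$ guaranteed by Condition \ref{recurrencecondition}. The key structural point---which is what makes the bound $O(\Delta)$ rather than $O(1)$---is the cancellation that comes from subtracting $Z^{\theta_2}(t_k,t_{k-1})\Delta X_{t_{k-1}}$ rather than simply $\Delta X_{t_{k-1}}$, which is precisely what makes both the increment $A_k$ and the remainder $B_k$ linear in $\Delta$.
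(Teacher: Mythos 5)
Your proof is correct and follows essentially the same route as the paper's: the same three-term decomposition (increment of the difference, $(1_m-Z^{\theta_2}(t_k,t_{k-1}))$ acting on the difference at $t_{k-1}$, and the $\sqrt\epsilon$-weighted integral), each bounded by $\tilde K\cdot\Delta$ via uniform-in-$\theta$ boundedness of $\bar X^\theta$, $\bar\lambda^\theta$, $\nabla_x\bar\lambda^\theta$, and $Z^\theta(t,s)$. Your write-up even cleans up a small slip in the paper's term $I$, which writes $(\nabla_x\bar\lambda^\theta)(\bar X^\theta_t)$ where the bound on the increment of $\bar X^\theta$ should involve $\bar\lambda^\theta(\bar X^\theta_t)$ itself.
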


\noindent\textit{Proof.} We write
\begin{align}
\sup_{(\theta_1, \theta_2)\in\bar\Theta^2}|F^\varepsilon_k(\theta_2;\{\bar X^{\theta_1}_{t_k}\}^n_{k=1})|\leq 2\cdot I+2\cdot II+III,\label{flemmabarreference}
\end{align}
where
\begin{align*}
I&:=\sup_{\theta\in\bar\Theta}|\bar X^{\theta}_{t_k}-\bar X^{\theta}_{t_{k-1}}|\leq\left[\sup_{\theta\in\bar\Theta, 0\leq t\leq T}|(\nabla_x\bar\lambda^\theta)(\bar X^\theta_t)|\right]\cdot\Delta;
\end{align*}
\begin{align*}
II&:=\sup_{\theta\in\bar\Theta}\left|1_{m}-Z^{\theta}(t_k,t_{k-1})\right|\cdot\sup_{\theta\in\bar\Theta}|\bar X^{\theta}_{t_{k-1}}|\leq\left[\sup_{\theta\in\bar\Theta, 0\leq t\leq T}|Z^{\theta}(t,0)\cdot(\nabla_x\bar\lambda^\theta)(\bar X^\theta_t)|\right]\cdot\left[\sup_{\theta\in\bar\Theta, 0\leq t\leq T}|\bar X^{\theta}_t|\right]\cdot\Delta;
\end{align*}
\begin{align*}
III&:=\sup_{\theta\in\bar\Theta}\left|\sqrt\epsilon\int^{t_k}_{t_{k-1}}Z^{\theta}(t_k,s)\cdot\bar J^\theta(\bar X^\theta_s)ds\right|\leq\sqrt\epsilon\cdot\left[\sup_{\theta\in\bar\Theta, 0\leq t\leq T}|Z^{\theta}(t,0)\cdot\bar J^\theta(\bar X^\theta_t)|\right]\cdot\Delta.
\end{align*}

The first statement, (\ref{firstflemmabar}), follows easily. (\ref{secondflemmabar}) may be obtained in the same way by omitting the last term in (\ref{flemmabarreference}).
\qed

\begin{lemma}\label{qlemma} Assume Conditions \ref{basicconditions} and \ref{recurrencecondition} and, in the $\infty$ regime, Condition \ref{centeringcondition}. There is a positive constant $\tilde K$ such that for all $n>0$ and $k=1, 2, $ \ldots $n$,
\begin{align*}
\inf_{\theta\in\Theta}||Q_k(\theta)||\geq\Delta\cdot\tilde K;
\end{align*}
in particular, the inverse matrices $Q^{-1}_k$ exist and $\sup_{\theta\in\Theta}||Q^{-1}_k(\theta)||\leq\Delta^{-1}\cdot\tilde K^{-1}$.
\end{lemma}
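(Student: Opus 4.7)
The plan is to establish the stronger spectral claim $\lambda_{\min}(Q_k(\theta)) \geq \Delta \cdot \tilde K$ uniformly in $k$ and $\theta$; both assertions of the lemma follow immediately, since for a positive-definite symmetric matrix one has $\|Q_k\| = \lambda_{\max}(Q_k) \geq \lambda_{\min}(Q_k)$ and $\|Q_k^{-1}\| = 1/\lambda_{\min}(Q_k)$.

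For a unit vector $v \in \mathbb{R}^m$, direct substitution of the definition of $Q_k$ yields
\begin{align*}
v^T Q_k(\theta) v = \int_{t_{k-1}}^{t_k} \bigl((Z^\theta)^T(t_k,s)\, v\bigr)^T \bar q^\theta(\bar X_s^\theta)\, \bigl((Z^\theta)^T(t_k, s)\, v\bigr) \, ds,
\end{align*}
so the task reduces to (a) a uniform strict positive lower bound on $\bar q^\theta(x)$ along the limiting trajectory, and (b) a uniform lower bound on $|(Z^\theta)^T(t_k, s)\, v|$. Part (b) is routine: the defining matrix ODE (\ref{Eq:MatrixODE}) together with boundedness of $\nabla_x \bar\lambda^\theta$ (Condition \ref{basicconditions}) yields, via Gr\"onwall's inequality, uniform-in-$(s, t_k, \theta)$ bounds on both $\|Z^\theta(t_k, s)\|$ and $\|(Z^\theta(t_k, s))^{-1}\|$, whence $|(Z^\theta)^T(t_k, s)\, v| \geq |v|/\|(Z^\theta(t_k,s))^{-1}\|$ is bounded away from zero uniformly.

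The main step is (a). Writing $\psi := \chi$ in the $\infty$ regime and $\psi := \Phi_\gamma$ in the $\gamma$ regime, one has
\begin{align*}
q^\theta(x, y) = (\sigma + \nabla_y \psi\, \tau_1)(\sigma + \nabla_y \psi\, \tau_1)^T + (\nabla_y \psi\, \tau_2)(\nabla_y \psi\, \tau_2)^T;
\end{align*}
I would first establish pointwise positive definiteness of $\bar q^\theta(x)$ by contradiction. If $v^T \bar q^\theta(x) v = 0$ for some $v \neq 0$, both integrands vanish $\mu_{*,x}$-a.e.; uniform nondegeneracy of $\tau_2 \tau_2^T$ (Condition \ref{basicconditions}) forces $v^T \nabla_y \psi(x, \cdot) \equiv 0$ on $\mathrm{supp}\, \mu_{*, x}$; substitution back reduces the first integrand to $|v^T \sigma(x, \cdot)|^2$, whose uniform nondegeneracy contradicts $v \neq 0$. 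To promote this pointwise positivity to the uniform lower bound required, I would use that $\bar q^\theta(\bar X_s^\theta)$ samples a continuous function of $(x, \theta)$ on the compact set $\{(\bar X_s^\theta, \theta) : s \in [0, T],\, \theta \in \bar\Theta\}$, compact by continuity of $(s, \theta) \mapsto \bar X_s^\theta$ on the compact $[0, T] \times \bar\Theta$; continuity of $(x, \theta) \mapsto \bar q^\theta(x)$ reduces in turn to continuous dependence of the corrector $\psi^\theta$ and of the invariant measure $\mu_{*, x}$ on their parameters, which is standard under the imposed recurrence and regularity conditions.

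Combining (a) and (b) then yields $v^T Q_k(\theta) v \geq c \cdot \Delta$ uniformly over unit $v$, $k \in \{1, \ldots, n\}$, and $\theta$, which is the desired spectral lower bound. The principal obstacle is the uniform control in (a); an alternative route that avoids the appeal to compactness and continuity of $\mu_{*, x}$ would be a direct quantitative Young-type estimate for $|v^T(\sigma + \nabla_y \psi\, \tau_1)|^2$, leveraging the explicit lower bound $|v^T \nabla_y \psi\, \tau_2|^2 \geq c_{\tau_2}\, |v^T \nabla_y \psi|^2$ to absorb the potentially negative cross term and produce explicit constants, at the price of more bookkeeping with the unboundedness of $\tau_1$ in $y$.
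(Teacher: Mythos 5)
Your proposal is correct and follows essentially the same route as the paper: the paper's (very terse) proof likewise rests on uniform nondegeneracy of $\bar q^\theta$ --- which it obtains by citing the argument of Theorem 11.3 in \cite{pavliotis2008multiscale}, i.e.\ precisely the $\tau_2$-then-$\sigma$ contradiction you spell out --- together with the observation that the fundamental-solution factors $Z^\theta(t_k,s)$ and their inverses are bounded uniformly away from zero. Your version merely fills in the details the paper delegates to the citation, and correctly identifies that the substantive content is a lower bound on $\lambda_{\min}(Q_k)$ rather than on $\|Q_k\|$ itself.
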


\noindent\textit{Proof.} Recalling our nondegeneracy assumptions, $\bar q^\theta$ is uniformly nondegenerate by the argument of Theorem 11.3 in \cite{pavliotis2008multiscale}. The result follows readily upon noting that the norms of the exponential terms in the definition of $Q$ are bounded uniformly in all parameters away from zero.

\qed

\subsection*{A.3 Lemmata on the Contrast Function}

\begin{lemma}(Limit of the Contrast Function)\label{contrastlimit}
Assume Conditions \ref{basicconditions} and \ref{recurrencecondition} and, in the $\infty$ regime, Condition \ref{centeringcondition}. Fix $n>0$. For any $\eta>0$,
\begin{align*}
\lim_{\epsilon\to0}P\left(\sup_{\theta_1,\theta_2\in\Theta}|U^\varepsilon(\theta_2; \{X^{\epsilon,\theta_1}_{t_k}\}_{k=1}^n)-\tilde U(\theta_2; \{\bar X^{\theta_1}_{t_k}\}_{k=1}^n)|>\eta\right)=0.
\end{align*}
\end{lemma}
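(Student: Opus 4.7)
The plan is to reduce the problem to pathwise control of $F^\varepsilon_k - \tilde F_k$ plus an appeal to Theorem \ref{xlimit} uniformly in $\theta_1$. First, fix $n$ (so $\Delta = T/n$ is a fixed positive number) and note that $Q_k^{-1}(\theta_2)$ is symmetric; applying the identity $a^{T}Aa - b^{T}Ab = (a-b)^{T}A(a+b)$ (valid for symmetric $A$) to each summand gives
\[
U^\varepsilon(\theta_2;\{X^{\varepsilon,\theta_1}_{t_k}\}) - \tilde U(\theta_2;\{\bar X^{\theta_1}_{t_k}\}) = \sum_{k=1}^n (F^\varepsilon_k - \tilde F_k)^T Q_k^{-1}(\theta_2)(F^\varepsilon_k + \tilde F_k),
\]
where $F^\varepsilon_k$ is evaluated on $\{X^{\varepsilon,\theta_1}_{t_k}\}$ and $\tilde F_k$ on $\{\bar X^{\theta_1}_{t_k}\}$. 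Hence
\[
\sup_{\theta_1,\theta_2}\bigl|U^\varepsilon - \tilde U\bigr| \leq \sum_{k=1}^n \sup_{\theta_1,\theta_2}|F^\varepsilon_k - \tilde F_k|\cdot\sup_{\theta_2}\|Q_k^{-1}(\theta_2)\|\cdot\sup_{\theta_1,\theta_2}\bigl(|F^\varepsilon_k| + |\tilde F_k|\bigr).
\]

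Next, compute the key difference explicitly:
\[
F^\varepsilon_k(\theta_2;\{X^{\varepsilon,\theta_1}\}) - \tilde F_k(\theta_2;\{\bar X^{\theta_1}\}) = \bigl[X^{\varepsilon,\theta_1}_{t_k} - \bar X^{\theta_1}_{t_k}\bigr] - Z^{\theta_2}(t_k,t_{k-1})\bigl[X^{\varepsilon,\theta_1}_{t_{k-1}} - \bar X^{\theta_1}_{t_{k-1}}\bigr] - \sqrt\epsilon\!\int_{t_{k-1}}^{t_k}\!\!Z^{\theta_2}(t_k,s)\bar J^{\theta_2}(\bar X^{\theta_2}_s)\,ds.
\]
By Theorem \ref{xlimit}, applied uniformly over $\theta_1 \in \Theta$ (see the remark at the start of Section \ref{S:TaylorExpansion}), one has $E\sup_{\theta_1, t}|X^{\varepsilon,\theta_1}_t - \bar X^{\theta_1}_t|^p \lesssim \epsilon^{p/2}$. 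Combining this with the uniform-in-$\theta_2$ boundedness of $\|Z^{\theta_2}(t_k,t_{k-1})\|$ and of $\bar J^{\theta_2}(\bar X^{\theta_2}_s)$ (guaranteed by the smoothness in $\theta$ in Condition \ref{basicconditions} together with compactness of $\Theta$) yields
\[
E\sup_{\theta_1,\theta_2}|F^\varepsilon_k - \tilde F_k|^2 \leq \tilde K\cdot\epsilon.
\]

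Then I would assemble the pieces: Lemma \ref{qlemma} bounds $\sup_{\theta_2}\|Q_k^{-1}(\theta_2)\|$ by $\Delta^{-1}\tilde K^{-1}$ (a constant once $n$ is fixed), while the pathwise decompositions in the proofs of Lemmas \ref{flemma} and \ref{flemmabar} give $E\sup_{\theta_1,\theta_2}(|F^\varepsilon_k|^2 + |\tilde F_k|^2) \leq \tilde K$ (the displayed bounds there are path-by-path, so the $E\sup$ is controlled, and the $L^2$ version proceeds identically). Applying Cauchy--Schwarz to each summand gives
\[
E\sup_{\theta_1,\theta_2}|U^\varepsilon - \tilde U| \leq \tilde K \sqrt\epsilon,
\]
and Markov's inequality concludes the proof.

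The main obstacle, and the reason for the delicate structuring of the appendix, is uniformity in $(\theta_1, \theta_2)$. One must ensure that Theorem \ref{xlimit} is truly uniform in $\theta_1$ (the paper asserts this but it relies tacitly on the joint smoothness of the coefficients in $(x,y,\theta)$ from Condition \ref{basicconditions}), and that the auxiliary quantities $Z^{\theta}$, $\bar J^\theta$, $Q_k^{-1}(\theta)$, and $\bar X^\theta$ are bounded along with their $\theta$-derivatives uniformly on the compact $\Theta^2$. Once that uniformity is in hand, the $E\sup$ bound follows routinely; if one were to attempt a more delicate argument, one could instead establish pointwise convergence in probability combined with equicontinuity in $(\theta_1, \theta_2)$, but under the hypotheses of the paper the direct $E\sup$ route is the cleanest.
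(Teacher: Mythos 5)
Your proof is correct and follows essentially the same route as the paper's: the same quadratic-form difference identity $a^TQa-b^TQb=(a-b)^TQ(a+b)$, the same explicit computation of $F^\varepsilon_k-\tilde F_k$ (with the $\sqrt\epsilon\int Z\bar J$ term killed by its prefactor and the remaining terms killed by Theorem \ref{xlimit}), and the same appeals to Lemmata \ref{flemma}, \ref{flemmabar}, and \ref{qlemma} for the bounded factors --- the paper merely splits the difference into two intermediate pieces through $U^\varepsilon(\theta_2;\{\bar X^{\theta_1}_{t_k}\})$, which you combine into a single step, and argues convergence in probability directly rather than via an $E\sup$ bound and Markov. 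The uniformity in $\theta_1$ of Theorem \ref{xlimit} that you rightly flag as the delicate point is used tacitly by the paper's own proof as well, so your argument is at the same level of rigor.
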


\noindent\textit{Proof.} We begin by writing
\begin{align*}
U^\varepsilon(\theta_2; \{X^{\epsilon,\theta_1}_{t_k}\}_{k=1}^n)-\tilde U(\theta_2; \{\bar X^{\theta_1}_{t_k}\}_{k=1}^n)&=\left[U^\varepsilon(\theta_2; \{X^{\epsilon,\theta_1}_{t_k}\}_{k=1}^n)-U^\varepsilon(\theta_2; \{\bar X^{\theta_1}_{t_k}\}_{k=1}^n)\right]\\
&\hspace{2pc}+\left[U^\varepsilon(\theta_2; \{\bar X^{\theta_1}_{t_k}\}_{k=1}^n)-\tilde U(\theta_2; \{\bar X^{\theta_1}_{t_k}\}_{k=1}^n)\right]\nonumber\\
&=\left[\Sigma_{k=1}^nA^\varepsilon_k(\theta_1,\theta_2)\right]+\left[\Sigma_{k=1}^nB^\varepsilon_k(\theta_1,\theta_2)\right],
\end{align*}
where
\begin{align*}
A^\varepsilon_k(\theta_1, \theta_2)&:=\left[F^\varepsilon_k(\theta_2;\{X^{\epsilon,\theta_1}_{t_k}\}_{k=1}^n)-F^\varepsilon_k(\theta_2;\{\bar X^{\theta_1}_{t_k}\}_{k=1}^n)\right]^TQ_k^{-1}(\theta_2)\left[F^\varepsilon_k(\theta_2;\{X^{\epsilon,\theta_1}_{t_k}\}_{k=1}^n)+F^\varepsilon_k(\theta_2;\{\bar X^{\theta_1}_{t_k}\}_{k=1}^n)\right],\nonumber\\
B^\varepsilon_k(\theta_1, \theta_2)&:=\left[F^\varepsilon_k(\theta_2;\{\bar X^{\theta_1}_{t_k}\}_{k=1}^n)-\tilde F_k(\theta_2;\{\bar X^{\theta_1}_{t_k}\}_{k=1}^n)\right]^TQ_k^{-1}(\theta_2)\left[F^\varepsilon_k(\theta_2;\{\bar X^{\theta_1}_{t_k}\}_{k=1}^n)+\tilde F_k(\theta_2;\{\bar X^{\theta_1}_{t_k}\}_{k=1}^n)\right].\nonumber
\end{align*}
As we have taken $n$ to be fixed, it suffices to show that each term vanishes in probability uniformly in $\theta_1, \theta_2$ as $\epsilon\to0$.

For $A^\varepsilon_k$, we have that
\begin{align}
Q_k^{-1}(\theta_2)\left[F^\varepsilon_k(\theta_2;\{X^{\epsilon,\theta_1}_{t_k}\}_{k=1}^n)+F^\varepsilon_k(\theta_2;\{\bar X^{\theta_1}_{t_k}\}_{k=1}^n)\right]\label{arh}
\end{align}
is bounded by Lemmata \ref{flemma}, \ref{flemmabar}, and \ref{qlemma} while
\begin{align}
F^\varepsilon_k(\theta_2;\{X^{\epsilon,\theta_1}_{t_k}\}_{k=1}^n)-F^\varepsilon_k(\theta_2;\{\bar X^{\theta_1}_{t_k}\}_{k=1}^n)&=[X^{\epsilon,\theta_1}_{t_k}-\bar X^{\theta_1}_{t_k}]-Z^{\theta_{2}}(t_k,t_{k-1})\cdot[X^{\epsilon,\theta_1}_{t_{k-1}}-\bar X^{\theta_1}_{t_{k-1}}]\label{alh}
\end{align}
vanishes by Theorem \ref{xlimit} and the fact that the Frobenius norm of $Z^{\theta_{2}}(t_k,t_{k-1})$ is bounded.

For $B^\varepsilon_k$, we have that
\begin{align}
Q_k^{-1}(\theta_2)\left[F^\varepsilon_k(\theta_2;\{\bar X^{\theta_1}_{t_k}\}_{k=1}^n)+\tilde F_k(\theta_2;\{\bar X^{\theta_1}_{t_k}\}_{k=1}^n)\right]\label{brh}
\end{align}
is bounded by Lemmata \ref{flemmabar} and \ref{qlemma} while
\begin{align}
F^\varepsilon_k(\theta_2;\{\bar X^{\theta_1}_{t_k}\}_{k=1}^n)-\tilde F_k(\theta_2;\{\bar X^{\theta_1}_{t_k}\}_{k=1}^n)&=-\sqrt\epsilon\int^{t_k}_{t_{k-1}}Z^{\theta_{2}}(t_k,s)\cdot\bar J^{\theta_2}(\bar X^{\theta_2}_s)ds\label{blh}
\end{align}
vanishes by the vanishing prefactor $-\sqrt\epsilon$ and the fact that the Euclidean norm of  $\int_{t_{k-1}}^{t_{k}}Z^{\theta_{2}}(t_k,s)\cdot\bar J^{\theta_2}(\bar X^{\theta_2}_s)ds$ is bounded (indeed, proportionally to $\Delta$).

\qed

\begin{lemma}\label{modulus}
Assume Conditions \ref{basicconditions} and \ref{recurrencecondition} and, in the $\infty$ regime, Condition \ref{centeringcondition}. There is a constant $\tilde K$ such that for any $\theta\in\Theta$ and $n>0$,
\begin{align*}
\sup_{\theta\in\Theta}\tilde w^\theta(\phi)\leq \tilde K\cdot\phi,
\end{align*}
where
\begin{align*}
\tilde w^\theta(\phi):=\sup_{\theta_1,\theta_2\in\Theta; |\theta_1-\theta_2|\leq\phi}\left|\tilde U(\theta_1; \{\bar X^{\theta}_{t_k}\}_{k=1}^n)-\tilde U(\theta_2; \{\bar X^{\theta}_{t_k}\}_{k=1}^n)\right|.
\end{align*}
\end{lemma}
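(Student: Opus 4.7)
The plan is to establish that $\tilde U(\,\cdot\,;\{\bar X^{\theta}_{t_k}\}_{k=1}^n)$ is Lipschitz in its first argument with a Lipschitz constant independent of $\theta$ and $n$. The modulus-of-continuity bound then follows trivially by definition of $\tilde w^\theta$. The strategy is to write
\begin{align*}
\tilde F_k^T(\theta_1)Q_k^{-1}(\theta_1)\tilde F_k(\theta_1)-\tilde F_k^T(\theta_2)Q_k^{-1}(\theta_2)\tilde F_k(\theta_2)
\end{align*}
as a telescoping sum in which one factor is varied at a time, and then use the mean-value theorem together with uniform bounds on the relevant $\theta$-derivatives. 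The data argument is fixed at $\{\bar X^\theta_{t_k}\}_{k=1}^n$ throughout, so only the explicit $\theta$ dependence of $\tilde F_k$ and $Q_k^{-1}$ enters.

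The two building-block estimates I need are (i) Lipschitz control of $\theta_1 \mapsto \tilde F_k(\theta_1;\{\bar X^\theta_{t_k}\}_{k=1}^n)$ with constant of order $\Delta$, and (ii) Lipschitz control of $\theta \mapsto Q_k^{-1}(\theta)$ with constant of order $1/\Delta$. For (i), I differentiate $\tilde F_k(\theta_1;\{\bar X^\theta_{t_k}\}_{k=1}^n)= [\bar X^\theta_{t_k}-\bar X^{\theta_1}_{t_k}]-Z^{\theta_1}(t_k,t_{k-1})[\bar X^\theta_{t_{k-1}}-\bar X^{\theta_1}_{t_{k-1}}]$ with respect to $\theta_1$; Condition \ref{basicconditions}(5) yields that $\nabla_\theta\bar X^\theta_t$ and $\nabla_\theta Z^\theta(t,s)$ are continuous in $t,s$ (and bounded on $[0,T]$) and smooth in $\theta$ on $\Theta$, and a short calculation shows that the contribution of the $\nabla_\theta Z^{\theta_1}$ cross term is of order $\Delta\cdot|\theta_1-\theta|$ (since $\bar X^\theta_{t_{k-1}}-\bar X^{\theta_1}_{t_{k-1}}$ is bounded) while the leading part $-\nabla_{\theta_1}\bar X^{\theta_1}_{t_k}+Z^{\theta_1}(t_k,t_{k-1})\nabla_{\theta_1}\bar X^{\theta_1}_{t_{k-1}}$ is of order $\Delta$ by the same semi-group expansion used in the proof of Lemma \ref{L:FisherInformation}. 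For (ii), $\nabla_\theta Q_k^{-1}=-Q_k^{-1}(\nabla_\theta Q_k)Q_k^{-1}$; Lemma \ref{qlemma} gives $\|Q_k^{-1}\|\le C/\Delta$, while $\|\nabla_\theta Q_k\|\le C\Delta$ follows from differentiating under the integral sign in the definition of $Q_k$ using the regularity of $\bar q^\theta$, $Z^\theta$, and $\bar X^\theta$ in $\theta$.

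Combining these with the amplitude bounds $|\tilde F_k(\theta_1;\{\bar X^\theta_{t_k}\}_{k=1}^n)|\le \tilde K\Delta$ (Lemma \ref{flemmabar}) and $\|Q_k^{-1}(\theta)\|\le \tilde K/\Delta$ (Lemma \ref{qlemma}), each of the three pieces in the telescoping decomposition of the $k$-th summand is of size at most $C\Delta\cdot|\theta_1-\theta_2|$: typically $(\Delta)\cdot(1/\Delta)\cdot(\Delta\cdot|\theta_1-\theta_2|)$ for the pieces that hit $\tilde F_k$, and $(\Delta)^2\cdot(1/\Delta\cdot|\theta_1-\theta_2|)$ for the piece that hits $Q_k^{-1}$. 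Summing over $k=1,\dots,n$ yields
\begin{align*}
\left|\tilde U(\theta_1;\{\bar X^{\theta}_{t_k}\}_{k=1}^n)-\tilde U(\theta_2;\{\bar X^{\theta}_{t_k}\}_{k=1}^n)\right|\le n\cdot C\Delta\cdot|\theta_1-\theta_2|=CT\cdot|\theta_1-\theta_2|,
\end{align*}
and since $C$ depends only on the uniform bounds in Conditions \ref{basicconditions} and \ref{recurrencecondition}, this gives the claim with $\tilde K:=CT$, uniform in $\theta$ and $n$.

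The main obstacle, and the reason for carrying the $\Delta$ bookkeeping carefully, is precisely the uniformity in $n$: naive bounds yield $\|Q_k^{-1}\|\le C/\Delta$ and so summing $n$ terms could in principle blow up as $n\to\infty$. The saving grace is the factor-of-$\Delta$ gain in $\tilde F_k$ (it vanishes in the limit $\theta_1\to\theta$ and has size $\Delta$ uniformly because of the $\theta$-smoothness of $\bar X$ and $Z$) together with the $n\cdot\Delta=T$ cancellation upon summation. This exact balance, mirroring what appears in the proof of Lemma \ref{L:FisherInformation}, is what produces a constant $\tilde K$ independent of $n$.
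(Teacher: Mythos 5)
Your proposal is correct and follows essentially the same route as the paper's proof: the paper likewise splits each summand into a piece varying $\tilde F_k$ and a piece varying $Q_k^{-1}$ (via the resolvent identity $Q_k^{-1}(\theta_1)-Q_k^{-1}(\theta_2)=Q_k^{-1}(\theta_1)[Q_k(\theta_2)-Q_k(\theta_1)]Q_k^{-1}(\theta_2)$, the difference form of your $\nabla_\theta Q_k^{-1}=-Q_k^{-1}(\nabla_\theta Q_k)Q_k^{-1}$), and establishes exactly the bounds $|\tilde F_k(\theta_1)-\tilde F_k(\theta_2)|\leq K\Delta|\theta_1-\theta_2|$ and $|Q_k(\theta_2)-Q_k(\theta_1)|\leq K\Delta|\theta_1-\theta_2|$, combined with Lemmata \ref{flemmabar} and \ref{qlemma} and the cancellation $n\cdot\Delta=T$. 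No substantive difference.
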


\noindent\textit{Proof.}
We write
\begin{align*}
\tilde U(\theta_1; \{\bar X^{\theta}_{t_k}\}_{k=1}^n)-\tilde U(\theta_2; \{\bar X^{\theta}_{t_k}\}_{k=1}^n)&=\left[\Sigma_{k=1}^nC_k(\theta_1,\theta_2)\right]+\left[\Sigma_{k=1}^nD_k(\theta_1,\theta_2)\right],
\end{align*}
where
\begin{align}
C_k(\theta_1, \theta_2)&:=\left[\tilde F_k(\theta_1;\{\bar X^{\theta}_{t_k}\}_{k=1}^n)-\tilde F_k(\theta_2;\{\bar X^{\theta}_{t_k}\}_{k=1}^n)\right]^TQ_k^{-1}(\theta_1)\left[\tilde F_k(\theta_1;\{\bar X^{\theta}_{t_k}\}_{k=1}^n)+\tilde F_k(\theta_2;\{\bar X^{\theta}_{t_k}\}_{k=1}^n)\right],\nonumber\\
D_k(\theta_1, \theta_2)&:=\tilde F^T_k(\theta_2;\{\bar X^{\theta}_{t_k}\}_{k=1}^n)\cdot\left[Q_k^{-1}(\theta_1)-Q_k^{-1}(\theta_2)\right]\cdot\tilde F_k(\theta_2;\{\bar X^{\theta}_{t_k}\}_{k=1}^n)\nonumber\\
&=\tilde F^T_k(\theta_2;\{\bar X^{\theta}_{t_k}\}_{k=1}^n)\cdot Q_k^{-1}(\theta_1)\cdot\left[Q_k(\theta_2)-Q_k(\theta_1)\right]\cdot Q_k^{-1}(\theta_2)\cdot\tilde F_k(\theta_2;\{\bar X^{\theta}_{t_k}\}_{k=1}^n).\nonumber
\end{align}
It suffices to show that there is a constant $K$ such that the absolute value of each term is bounded by $K\cdot\Delta\cdot|\theta_1-\theta_2|$.

For $C_k$, the terms $Q_k^{-1}(\theta_1)\left[\tilde F_k(\theta_1;\{\bar X^{\theta}_{t_k}\}_{k=1}^n)+\tilde F_k(\theta_2;\{\bar X^{\theta}_{t_k}\}_{k=1}^n)\right]$ are bounded by Lemmata \ref{flemmabar} and \ref{qlemma}, so it will suffice to bound the terms $\left|\tilde F_k(\theta_1;\{\bar X^{\theta}_{t_k}\}_{k=1}^n)-\tilde F_k(\theta_2;\{\bar X^{\theta}_{t_k}\}_{k=1}^n)\right|$, individually, by a certain constant times $\Delta\cdot|\theta_1-\theta_2|$.

Notice that
\begin{align}
\tilde F_k(\theta_1;\{\bar X^{\theta}_{t_k}\}_{k=1}^n)-\tilde F_k(\theta_2;\{\bar X^{\theta}_{t_k}\}_{k=1}^n)&=I+II+III,\nonumber
\end{align}
where
\begin{align}
I&:=[\bar X^{\theta_2}_{t_k}-\bar X^{\theta_1}_{t_k}]-[\bar X^{\theta_2}_{t_{k-1}}-\bar X^{\theta_1}_{t_{k-1}}],\nonumber\\
II&:=\left[Z^{\theta_{1}}(t_k,t_{k-1})-1_{m}\right]\cdot\left[\bar X^{\theta_1}_{t_{k-1}}-\bar X^{\theta_2}_{t_{k-1}}\right],\nonumber\\
III&:=\left[Z^{\theta_{1}}(t_k,t_{k-1})-Z^{\theta_{2}}(t_k,t_{k-1})\right]\cdot\left[\bar X^{\theta_2}_{t_{k-1}}-\bar X^\theta_{t_{k-1}}\right].\nonumber
\end{align}

By the triangle inequality,
\begin{align}
\left|\tilde F_k(\theta_1;\{\bar X^{\theta}_{t_k}\}_{k=1}^n)-\tilde F_k(\theta_2;\{\bar X^{\theta}_{t_k}\}_{k=1}^n)\right|&\leq\left|I\right|+\left|II\right|+\left|III\right|.\nonumber
\end{align}
Firstly,
\begin{align}
\left|I\right|&\leq\int^{t_k}_{t_{k-1}}|\bar\lambda^{\theta_2}(\bar X^{\theta_2}_s)-\bar\lambda^{\theta_1}(\bar X^{\theta_1}_s)|ds\leq\left[\sup_{\tilde\theta\in\Theta, 0\leq t\leq T}|\nabla_\theta(\bar\lambda^{\tilde\theta}(X^{\tilde\theta}_t))|\right]\cdot\Delta\cdot|\theta_1-\theta_2|;\nonumber
\end{align}
secondly,
\begin{align}
\left|II\right|&\leq \left[\sup_{\tilde\theta\in\Theta,0\leq t\leq T}|(\nabla_x\bar\lambda^{\tilde\theta})(\bar X^{\tilde\theta}_t)|\right]\cdot\Delta\cdot \int^{t_{k-1}}_0|\bar\lambda^{\theta_1}(\bar X^{\theta_1}_s)-\bar\lambda^{\theta_2}(\bar X^{\theta_2}_s)|ds\nonumber\\
&\leq\left[\sup_{\tilde\theta\in\Theta,0\leq t\leq T}|(\nabla_x\bar\lambda^{\tilde\theta})(\bar X^{\tilde\theta}_t)|\right]\cdot\left[\sup_{\tilde\theta\in\Theta, 0\leq t\leq T}|\nabla_\theta(\bar\lambda^{\tilde\theta}(X^{\tilde\theta}_t))|\right]\cdot T\cdot\Delta\cdot|\theta_1-\theta_2|;\nonumber
\end{align}
thirdly,
\begin{align}
\left|III\right|&\leq\left|Z^{\theta_{1}}(t_k,t_{k-1})\right|\cdot\left|\int^{t_k}_{t_{k-1}}(\nabla_x\bar\lambda^{\theta_2})(\bar X^{\theta_2}_u)-(\nabla_x\bar\lambda^{\theta_1})(\bar X^{\theta_1}_u)du\right|\cdot \left|\bar X^{\theta_2}_{t_{k-1}}-\bar X^\theta_{t_{k-1}}\right|\nonumber\\
&\leq\left[\sup_{\tilde\theta\in\Theta,0\leq t\leq T}|Z^{\theta}(t,0)|\right]\cdot \left[\sup_{\tilde\theta\in\Theta,0\leq t\leq T}|\nabla_\theta\left((\nabla_x\bar\lambda^{\tilde\theta})(\bar X^{\tilde\theta}_t)\right)|\right]\cdot\left[\sup_{\tilde\theta\in\Theta, 0\leq t\leq T}2|\bar X^{\tilde\theta}_t|\right]\cdot\Delta\cdot|\theta_1-\theta_2|;\nonumber
\end{align}

hence indeed there is a constant $K$ such that
\begin{align}
\left|\tilde F_k(\theta_1;\{\bar X^{\theta}_{t_k}\}_{k=1}^n)-\tilde F_k(\theta_2;\{\bar X^{\theta}_{t_k}\}_{k=1}^n)\right|&\leq K\cdot\Delta\cdot|\theta_1-\theta_2|.\nonumber
\end{align}

For $D_k$, the terms $Q_k^{-1}(\theta_i)\cdot\tilde F_k(\theta_2;\{\bar X^{\theta}_{t_k}\}_{k=1}^n)$ are bounded by Lemmata \ref{flemmabar} and \ref{qlemma}, so using the fact that the Frobenius norm is equivalent to the operator norm, it will suffice to bound the terms $\left|Q_k(\theta_2)-Q_k(\theta_1)\right|$, individually, by a certain constant times $\Delta\cdot|\theta_1-\theta_2|$.

We have
\begin{align*}
Q_k(\theta_2)-Q_k(\theta_1)&=IV+V,
\end{align*}
where
\begin{align*}
IV&:=\int^{t_k}_{t_{k-1}}Z^{\theta_{2}}(t_k,s)\cdot \left(\bar q^{\theta_2}(\bar X^{\theta_2}_{s})-\bar q^{\theta_1}(\bar X^{\theta_1}_{s})\right)\cdot e^{\int^{t_k}_s(\nabla_x\bar\lambda^{\theta_2})^T(\bar X^{\theta_2}_{u})du}ds,\\
V&:=\int^{t_k}_{t_{k-1}} \left(Z^{\theta_{2}}(t_k,s)-Z^{\theta_{1}}(t_k,s)\right)\cdot \bar q^{\theta_1}(\bar X^{\theta_1}_{s})\cdot \left(Z^{\theta_{2}}(t_k,s)+Z^{\theta_{1}}(t_k,s)\right);
\end{align*}
by the triangle inequality,
\begin{align*}
\left|Q_k(\theta_2)-Q_k(\theta_1)\right|&\leq\left|IV\right|+\left|V\right|.
\end{align*}

Firstly,
\begin{align*}
\left|IV\right|&\leq\left[\sup_{\tilde\theta\in\Theta,0\leq t\leq T}\left|Z^{\tilde\theta}(t,0)(Z^{\tilde\theta})^{T}(t,0)\right|\right]\cdot\left[\sup_{\tilde\theta\in\Theta,0\leq t\leq T}||\nabla_\theta\left(\bar q^{\tilde\theta}(\bar X^{\tilde\theta}_t)\right)||\right]\cdot\Delta\cdot|\theta_1-\theta_2|;
\end{align*}
secondly,
\begin{align*}
\left|V\right|&\leq\left[\sup_{\tilde\theta\in\Theta,0\leq t\leq T}|Z^{\theta}(t,0)|\right]\cdot \left[\sup_{\tilde\theta\in\Theta,0\leq t\leq T}|\nabla_\theta\left((\nabla_x\bar\lambda^{\tilde\theta})(\bar X^{\tilde\theta}_t)\right)|\right]\cdot\left[\sup_{\tilde\theta\in\Theta,0\leq t\leq T}||\bar q^{\tilde\theta}(\bar X^{\tilde\theta}_t)||\right]\cdot\Delta\cdot|\theta_1-\theta_2|,
\end{align*}
where the difference of exponentials has been handled exactly as in the bound above for $|III|$; hence indeed there is a (perhaps larger) constant $K$ such that
\begin{align}
\left|Q_k(\theta_2)-Q_k(\theta_1)\right|&\leq K\cdot\Delta\cdot|\theta_1-\theta_2|,\nonumber
\end{align}
completing the proof of the lemma.

\qed

\subsection*{A.4 Lemmata for Frequent Sampling}
The contrast functions upon which our estimators are founded are defined as sums over a number of indices equal to the number of sampled data $n$. When $n$ is fixed, termwise convergence as $\epsilon\to0$ is enough to deduce convergence of the sums. When, however, $n$ is permitted to increase simultaneously as $\epsilon\to0$, one must examine the convergence of sums over increasingly many indices. Convergence is easily deduced if statements of termwise convergence can be strengthened so that they uniformly outpace the increasing number of indices. The strengthened bounds that allow us to take this approach are established in this section.

\begin{lemma}\label{nufbound} Assume Conditions \ref{basicconditions} and \ref{recurrencecondition} and, in the $\infty$ regime, Condition \ref{centeringcondition}. Assume that the sampling interval $\Delta:=T/n$ does not decrease too quickly relative to $\epsilon$; that is, suppose that there is a finite constant $G$ such that always $\epsilon\cdot n\leq G$. There is a constant $\tilde K$ such that for $\epsilon$ (and hence also $\delta=\delta(\epsilon))$ sufficiently small, independently of $(n,k)$,
\begin{align}
E|F^\varepsilon_k(\theta;\{X^{\epsilon, \theta}_{t_k}\}^n_{k=1})|^2\leq\tilde K\cdot\epsilon\cdot\Delta,\label{firstnufbound}\\
E|\tilde F_k(\theta;\{X^{\epsilon, \theta}_{t_k}\}^n_{k=1})|^2\leq\tilde K\cdot\epsilon\cdot\Delta\label{secondnufbound}.
\end{align}
\end{lemma}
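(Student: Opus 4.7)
The plan is to exploit the representation (\ref{effref}) established in the proof of Theorem \ref{philimit}, which expresses $F^\varepsilon_k$ as the sum of two $\sqrt\epsilon$-weighted It\^o integrals over $[t_{k-1},t_k]$ plus the remainder block $\sqrt\epsilon\left[\tilde{\mathcal R}^{\varepsilon,\theta}_{t_k}-Z^{\theta}(t_k,t_{k-1})\tilde{\mathcal R}^{\varepsilon,\theta}_{t_{k-1}}\right]$. The strategy is to show that each of these three pieces contributes at most a constant times $\epsilon\cdot\Delta$ to the second moment. Once (\ref{firstnufbound}) is in hand, (\ref{secondnufbound}) follows at once: the quantity $F^\varepsilon_k-\tilde F_k=-\sqrt\epsilon\int^{t_k}_{t_{k-1}}Z^\theta(t_k,s)\cdot\bar J^\theta(\bar X^\theta_s)ds$ has second moment bounded by a constant times $\epsilon\cdot\Delta^2\leq T\cdot\epsilon\cdot\Delta$ by boundedness of the integrand.

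For the two stochastic integral terms, I would apply the It\^o isometry together with the uniform boundedness of $\|Z^\theta(t_k,s)\|$ on $[0,T]$ and the $\mathcal L^p$ moment bounds of Lemma \ref{boundlemma} on $(\sigma+\nabla_y\psi\cdot\tau_1)(X^\varepsilon,Y^\varepsilon)$ and $(\nabla_y\psi\cdot\tau_2)(X^\varepsilon,Y^\varepsilon)$. This immediately yields an $\mathcal L^2$ bound proportional to $\epsilon\cdot\Delta$ for each, with constants independent of $k$ and $n$.

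The heart of the proof is the remainder block, where it suffices to show
\[
E\bigl|\tilde{\mathcal R}^{\varepsilon,\theta}_{t_k}-Z^{\theta}(t_k,t_{k-1})\tilde{\mathcal R}^{\varepsilon,\theta}_{t_{k-1}}\bigr|^2\leq \tilde K\cdot\Delta.
\]
Here I would revisit the decomposition of $\tilde{\mathcal R}^\varepsilon$ arising in the proof of Theorem \ref{philimit} term by term. Splitting
\[
\tilde{\mathcal R}^\varepsilon_{t_k}-Z^\theta(t_k,t_{k-1})\tilde{\mathcal R}^\varepsilon_{t_{k-1}}=\bigl[\tilde{\mathcal R}^\varepsilon_{t_k}-\tilde{\mathcal R}^\varepsilon_{t_{k-1}}\bigr]+\bigl[1_m-Z^\theta(t_k,t_{k-1})\bigr]\tilde{\mathcal R}^\varepsilon_{t_{k-1}},
\]
the second piece gains an explicit $O(\Delta)$ factor from $\|1_m-Z^\theta(t_k,t_{k-1})\|$, against which the already-established $\mathcal L^p$ bounds on $\tilde{\mathcal R}^\varepsilon$ (assembled from Lemmata \ref{moments}, \ref{boundlemma}, \ref{averagelemma}, and \ref{ergodiclemma}) are more than sufficient. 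For the first piece, each Lebesgue-integral summand in $\mathcal R^\varepsilon_I,\mathcal R^\varepsilon_{II},\mathcal R^\varepsilon_{III}$ becomes an integral over $[t_{k-1},t_k]$ and is $O(\Delta)$ in $\mathcal L^2$ by Cauchy-Schwarz and the moment bounds; each stochastic-integral summand is $O(\sqrt\Delta)$ in $\mathcal L^2$ by It\^o isometry; and the vanishing prefactors $\delta/\sqrt\epsilon$, $\delta^2/\epsilon^{3/2}$, $\delta/\epsilon$, etc., only improve matters.

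The main obstacle is the ergodic-type summand $\frac{1}{\ell}\int_0^t[\nabla_y\Phi\cdot g(X^\varepsilon_s,Y^\varepsilon_s)-\overline{\nabla_y\Phi\cdot g}(\bar X_s)]ds$ and the boundary terms $(\delta/\sqrt\epsilon)[\Phi(X^\varepsilon_t,Y^\varepsilon_t)-\Phi(x_0,y_0)]$ (and similar for $\chi$), where naive estimates yield only $o(1)$ in $\mathcal L^2$ rather than $O(\sqrt\Delta)$. For the boundary terms, I would apply the It\^o formula to $\Phi$ (resp.\ $\chi$) on the short interval $[t_{k-1},t_k]$, thereby recasting the increment as further Lebesgue and stochastic integrals over $[t_{k-1},t_k]$ whose integrands have bounded moments; this restores the $\sqrt\Delta$ rate. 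For the ergodic summand, I would sharpen the estimate of Lemma \ref{averagelemma}/\ref{ergodiclemma} on a single short window, obtaining a bound of order $(\sqrt\epsilon+\sqrt\delta)\cdot\sqrt\Delta$ in $\mathcal L^2$; absorbing any leftover slack is exactly where the standing hypothesis $\epsilon\cdot n\leq G$, i.e.\ $\epsilon=O(\Delta)$, is invoked. Collecting all contributions gives the required $\tilde K\Delta$ bound on the squared remainder increment, and multiplication by the outer $\epsilon=(\sqrt\epsilon)^2$ prefactor completes the proof.
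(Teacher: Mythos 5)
Your route differs from the paper's. You work from the representation (\ref{effref}) and reduce everything to the bound $E\bigl|\tilde{\mathcal R}^{\varepsilon,\theta}_{t_k}-Z^{\theta}(t_k,t_{k-1})\tilde{\mathcal R}^{\varepsilon,\theta}_{t_{k-1}}\bigr|^2\leq\tilde K\Delta$, whereas the paper decomposes $F^\varepsilon_k$ directly into the drift-mismatch integral $\int^{t_k}_{t_{k-1}}[\frac\epsilon\delta b+c-\bar\lambda^{\theta}(\bar X^{\theta}_s)]ds$, the stochastic integral $\sqrt\epsilon\int^{t_k}_{t_{k-1}}\sigma\,dW$, and the term $(1_m-Z^{\theta}(t_k,t_{k-1}))\cdot[X^{\varepsilon,\theta}_{t_{k-1}}-\bar X^{\theta}_{t_{k-1}}]$, bounding these respectively via the Poisson-equation calculations (\ref{Phireference}) and (\ref{chireference}) together with $\delta=O(\epsilon)$ and $\epsilon=O(\Delta)$, via the It\^o isometry, and via Theorem \ref{xlimit} plus $\|1_m-Z\|=O(\Delta)$. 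Because the explicit terms of (\ref{phione}) telescope under the semigroup property of $Z$, your reduction is essentially equivalent to the paper's, and most of your individual estimates (isometry for the window stochastic integrals, Cauchy--Schwarz for the Lebesgue summands, the $O(\Delta)$ gain from $1_m-Z$, the deduction of (\ref{secondnufbound}) from (\ref{firstnufbound})) are correct. One presentational caveat: the paper establishes only convergence in probability of $\tilde{\mathcal R}^\varepsilon$, not ``$\mathcal L^p$ bounds''; the uniform $\mathcal L^2$ boundedness you invoke for $(1_m-Z)\tilde{\mathcal R}^\varepsilon_{t_{k-1}}$ is true but must be assembled from Theorem \ref{xlimit} and Lemma \ref{boundlemma} applied to each explicit term of (\ref{phione}).

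The genuine misstep is your treatment of the boundary terms $(\delta/\sqrt\epsilon)[\Phi(X^\varepsilon_t,Y^\varepsilon_t)-\Phi(x_0,y_0)]$ and the $\chi$ analogue. Applying the It\^o formula to $\Phi$ on $[t_{k-1},t_k]$ is circular: it regenerates the singular term $\frac{\epsilon}{\delta^2}\int^{t_k}_{t_{k-1}}\mathcal L_{*,x}\Phi\,ds=-\frac{\epsilon}{\delta^2}\int^{t_k}_{t_{k-1}}(\lambda-\bar\lambda)\,ds$, which after the outer prefactor $\delta^2/\epsilon^{3/2}$ becomes $\frac1{\sqrt\epsilon}\int^{t_k}_{t_{k-1}}(\lambda-\bar\lambda)\,ds$; the integrand carries no small factor, so Cauchy--Schwarz yields only $O(\Delta^2/\epsilon)$ in $\mathcal L^2$, which is \emph{not} $O(\Delta)$ precisely when $\epsilon=o(\Delta)$ --- the regime of interest. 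You have the difficulty inverted: these boundary increments require no further expansion, since the crude supremum bound gives $E|(\delta^2/\epsilon^{3/2})\Phi\text{-increment}|^2\leq K\delta^4/\epsilon^3=O(\epsilon^3)=O(\Delta^2)$ and $E|(\delta/\sqrt\epsilon)\chi\text{-increment}|^2\leq K\delta^2/\epsilon=O(\epsilon^2)=O(\Delta^2)$, using $\delta=O(\epsilon^{3/2})$ (Condition \ref{ellcondition} in the $\infty$ regime) and $\epsilon=O(\Delta)$. Likewise the ergodic summand needs no sharpened version of Lemma \ref{averagelemma}: plain Cauchy--Schwarz on the window gives $O(\Delta^2)$ because its integrand is bounded in $\mathcal L^2$ uniformly in $s$. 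With these two corrections your argument closes and recovers the paper's bound.
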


\noindent\textit{Proof.} Recall that
\begin{align}
F^\varepsilon_k(\theta; \{X^{\epsilon, \theta}_{t_k}\}_{k=1}^n)&=\left[[X^{\epsilon, \theta}_{t_k}-\bar X^\theta_{{t_k}}]-Z^{\theta}(t_k,t_{k-1})\cdot[X^{\epsilon, \theta}_{t_{k-1}}-\bar X^\theta_{{t_{k-1}}}]\right]\label{nufboundreference}\\
&\hspace{2pc}-\sqrt\epsilon\int^{t_k}_{t_{k-1}}Z^{\theta}(t_k,s)\cdot\bar  J^\theta(\bar X^\theta_{s})ds\nonumber\\
&\hspace{2pc}=I+II+III,\nonumber
\end{align}
where
\begin{align*}
I&:=\int^{t_k}_{t_{k-1}}\left[\frac\epsilon\delta b(X^{\varepsilon,\theta}_s,Y^{\epsilon,\theta}_s)+c(X^{\varepsilon,\theta}_s,Y^{\epsilon,\theta}_s)-\bar\lambda^{\theta}(\bar X^{\theta}_s)\right]ds,\\
II&:=-\sqrt\epsilon\int^{t_k}_{t_{k-1}}\sigma(X^{\varepsilon,\theta}_s,Y^{\epsilon,\theta}_s)dW_s,\\
III&:=\left(1-Z^{\theta}(t_k,t_{k-1})\right)\cdot[X^{\varepsilon,\theta}_{t_{k-1}}-\bar X^{\theta}_{t_{k-1}}];
\end{align*}
it suffices to consider each of these separately.

$I$ is handled by arguments similar to those presented in the proofs of Theorem \ref{philimit} and Lemma \ref{moments}. It follows by the calculations (\ref{Phireference}) and, in the $\infty$ regime, (\ref{chireference}), that there is a constant $K$, which may be chosen independently of $(n,k)$, such that
\begin{align*}
E|I|^2&\leq \delta^2\cdot K\leq\epsilon\cdot\Delta\cdot K,
\end{align*}
where we have used the fact that $\delta$ is $O(\epsilon)$ and the assumption that $\epsilon$ is $O(\Delta)$.

Meanwhile, it is clear that there is a (perhaps larger) constant $K$ such that
\begin{align*}
E|II|^2\leq\epsilon\cdot\Delta\cdot K.
\end{align*}
Finally, in light of Theorem \ref{xlimit} and the fact that $||1_{m}-Z^{\theta_{2}}(t_k,t_{k-1})||$ is bounded by a finite constant times $\Delta$ as $\Delta\to 0$, it is clear that for a (perhaps larger) constant $K$,
\begin{align*}
E|III|^2&\leq\epsilon\cdot\Delta^2\cdot K\leq\epsilon\cdot\Delta\cdot T\cdot K.
\end{align*}

The gives the first statement, (\ref{firstnufbound}), in the $\infty$ regime. The proof for the $\gamma$ regime is similar and easier.

(\ref{secondnufbound}) may be obtained in the same way by omitting the last term in (\ref{nufboundreference}).

\qed

\begin{lemma}\label{nuflemma} Assume Conditions \ref{basicconditions} and \ref{recurrencecondition} and, in the $\infty$ regime, Condition \ref{centeringcondition}. Assume that the sampling interval $\Delta:=T/n$ does not decrease too quickly relative to $\epsilon$; that is, suppose that there is a finite constant $G$ such that always $\epsilon\cdot n\leq G$. There is a constant $\tilde K$ such that for $\epsilon$ (and hence also $\delta=\delta(\epsilon))$ sufficiently small, independently of $(n,k)$,
\begin{align}
\sup_{(\theta_1, \theta_2)\in\bar\Theta^2}E|F^\varepsilon_k(\theta_2;\{X^{\epsilon, \theta_1}_{t_k}\}^n_{k=1})|\leq\Delta\cdot\tilde K,\label{nufirstflemma}\\
\sup_{(\theta_1, \theta_2)\in\bar\Theta^2}E|\tilde F_k(\theta_2;\{X^{\epsilon, \theta_1}_{t_k}\}^n_{k=1})|\leq\Delta\cdot\tilde K.\label{nusecondflemma}
\end{align}
\end{lemma}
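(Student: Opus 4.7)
The plan is to follow the decomposition of Lemma \ref{flemma} (see (\ref{flemmareference})), but to improve the bound on the first summand, namely the slow-process increment $\sup_{\theta \in \bar\Theta}|X^{\varepsilon,\theta}_{t_k} - X^{\varepsilon,\theta}_{t_{k-1}}|$, from $O(1)$ to $O(\Delta)$ in $L^1$, uniformly in $(n,k)$. The four remaining summands in that decomposition were already bounded pointwise by a constant times $\Delta$ (uniformly in $\theta_1, \theta_2 \in \bar\Theta$), so those estimates serve without modification.

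For the increment, writing via the slow-process SDE
\begin{align*}
X^{\varepsilon,\theta}_{t_k} - X^{\varepsilon,\theta}_{t_{k-1}} = \int_{t_{k-1}}^{t_k}\left(\tfrac{\epsilon}{\delta}b^\theta + c^\theta\right)(X^{\varepsilon,\theta}_s, Y^{\varepsilon,\theta}_s)\, ds + \sqrt{\epsilon}\int_{t_{k-1}}^{t_k}\sigma(X^{\varepsilon,\theta}_s, Y^{\varepsilon,\theta}_s)\, dW_s,
\end{align*}
the $c^\theta$ integral is controlled in $L^1$ by a constant times $\Delta$ via Lemma \ref{moments} and polynomial growth; the stochastic integral has $L^2$ norm $O(\sqrt{\epsilon\Delta})$ by It\^o's isometry and Lemma \ref{boundlemma}, and since the standing hypothesis $\epsilon n \leq G$ is just $\epsilon \leq G\Delta/T$, this is itself $O(\Delta)$.

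The genuinely delicate piece is the singular term $\int_{t_{k-1}}^{t_k}\frac{\epsilon}{\delta}b^\theta\, ds$ in the $\infty$ regime. Here I would invoke the Poisson-equation device (\ref{chifunction}) exactly as in the sketch of Lemma \ref{moments}: applying It\^o's formula to $\chi^\theta(X^{\varepsilon,\theta}, Y^{\varepsilon,\theta})$ on $[t_{k-1}, t_k]$ and rearranging as in (\ref{chireference}) rewrites the singular integral as the sum of $\int_{t_{k-1}}^{t_k}\nabla_y\chi^\theta \cdot g(X^{\varepsilon,\theta}_s, Y^{\varepsilon,\theta}_s)\, ds$, which is $O(\Delta)$ in $L^1$ by Lemma \ref{moments}, plus a remainder consisting of (i) a boundary increment of $\chi^\theta$ multiplied by $\delta$, which is $O(\delta) = O(\Delta)$ because $\delta = o(\epsilon)$ in the $\infty$ regime and $\epsilon = O(\Delta)$, and (ii) drift and martingale integrands that carry additional prefactors of $\epsilon$ or $\delta$ and are therefore $O(\Delta)$ in $L^1$ by Lemma \ref{boundlemma}, It\^o's isometry, and $\epsilon = O(\Delta)$. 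In the $\gamma$ regime the ratio $\epsilon/\delta$ is bounded and no Poisson correction is needed; the integral is directly $O(\Delta)$ in $L^1$.

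Combining these gives the $L^1$ bound $\tilde K \Delta$ on the increment uniformly in $(n, k, \theta)$, and together with the $O(\Delta)$ bounds on the other four summands yields (\ref{nufirstflemma}). Statement (\ref{nusecondflemma}) follows at once since $\tilde F_k$ is obtained from $F_k^\varepsilon$ by omitting only the $\sqrt{\epsilon}\int Z^{\theta_2}\bar J^{\theta_2}\, ds$ term, whose absolute value is already $O(\sqrt{\epsilon}\Delta) = O(\Delta)$. The principal obstacle is the singular $\frac{\epsilon}{\delta}b$ contribution, which is na\"ively $O(\frac{\epsilon}{\delta}\Delta)$ and explodes in the $\infty$ regime; the centering condition together with the assumption $\epsilon = O(\Delta)$ are precisely what convert this into a clean $O(\Delta)$ bound.
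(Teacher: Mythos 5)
Your proposal is correct and follows essentially the same route as the paper: it reuses the five-term decomposition from Lemma \ref{flemma} and sharpens only the slow-process increment term (\ref{supxincrement}) to $O(\Delta)$ in $L^1$, which is exactly what the paper's bound $EI\leq K(\sqrt\epsilon+\sqrt\Delta)\sqrt\Delta\leq K(\sqrt{\epsilon\cdot n}+1)\Delta$ encodes. Your treatment of the singular $\frac\epsilon\delta b$ term via the Poisson equation (\ref{chifunction}) and the calculation (\ref{chireference}) simply makes explicit what the paper leaves implicit in its appeal to Lemma \ref{moments}.
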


\noindent\textit{Proof.} Let us examine the proof of Lemma \ref{flemma}, with particular attention to (\ref{supxincrement}). It is not difficult to see that a constant $K$ may be found such that for $\epsilon$ sufficiently small,
\begin{align*}
EI&\leq K(\sqrt\epsilon+\sqrt\Delta)\sqrt\Delta\\
&\leq K(\sqrt{\epsilon\cdot n}+1)\Delta.
\end{align*}
Thus one sees that with the new assumption that $\epsilon\cdot n\leq G$, $EI$ is bounded for $\epsilon$ sufficiently small, independently of $n$, by a finite constant times $\Delta$. The same having been noted for $II+EIII+IV+V$, the desired bounds follow.

\qed

\begin{lemma}(Limit of the Contrast Function as $\epsilon+\Delta\to0$)\label{nucontrastlimit}
Assume Conditions \ref{basicconditions} and \ref{recurrencecondition} and, in the $\infty$ regime, Condition \ref{centeringcondition}. Assume that the sampling interval $\Delta:=T/n$ does not decrease too quickly relative to $\epsilon$, so that $\epsilon$ is $o(\Delta)$ as $\Delta\to0$. For any $\eta>0$,
\begin{align*}
\lim_{(\epsilon+\Delta)\to0}P\left(\sup_{\theta_1,\theta_2\in\Theta}|U^\varepsilon(\theta_2; \{X^{\epsilon,\theta_1}_{t_k}\}_{k=1}^n)-\tilde U(\theta_2; \{\bar X^{\theta_1}_{t_k}\}_{k=1}^n)|>\eta\right)=0.
\end{align*}
\end{lemma}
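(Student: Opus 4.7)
\noindent\textit{Proof Proposal.} My plan is to adapt the decomposition used in the proof of Lemma~\ref{contrastlimit}, writing
\begin{align*}
U^\varepsilon(\theta_2; \{X^{\epsilon,\theta_1}_{t_k}\}_{k=1}^n) - \tilde U(\theta_2; \{\bar X^{\theta_1}_{t_k}\}_{k=1}^n) = \sum_{k=1}^n A_k^\varepsilon(\theta_1, \theta_2) + \sum_{k=1}^n B_k^\varepsilon(\theta_1, \theta_2),
\end{align*}
with $A_k^\varepsilon$ and $B_k^\varepsilon$ defined exactly as there. The essential new difficulty is that $n = T/\Delta$ tends to infinity, so termwise vanishing is not enough; each summand must vanish at a rate that outpaces $n$, uniformly in $(\theta_1, \theta_2) \in \bar\Theta^2$.

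The $B^\varepsilon_k$ sum can be handled deterministically. The identity $F_k^\varepsilon(\theta_2; \{\bar X^{\theta_1}\}) - \tilde F_k(\theta_2; \{\bar X^{\theta_1}\}) = -\sqrt\epsilon \int_{t_{k-1}}^{t_k} Z^{\theta_2}(t_k, s) \bar J^{\theta_2}(\bar X^{\theta_2}_s) ds$ is bounded uniformly by $C\sqrt\epsilon\Delta$; combined with $\|Q^{-1}_k\| \leq C/\Delta$ (Lemma~\ref{qlemma}) and $|F_k^\varepsilon(\theta_2; \{\bar X^{\theta_1}\})|, |\tilde F_k(\theta_2; \{\bar X^{\theta_1}\})| \leq C\Delta$ (Lemma~\ref{flemmabar}), this yields the pointwise uniform bound $\sup_{\theta_1, \theta_2} |B_k^\varepsilon| \leq C\sqrt\epsilon\Delta$, so that $\sum_k \sup|B_k^\varepsilon| \leq CT\sqrt\epsilon \to 0$, using only $n\Delta = T$.

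The $A^\varepsilon_k$ sum is more subtle. Setting $\mathcal{D}_k^\varepsilon := F_k^\varepsilon(\theta_2; \{X^{\epsilon,\theta_1}\}) - F_k^\varepsilon(\theta_2; \{\bar X^{\theta_1}\}) = [X^{\epsilon,\theta_1}_{t_k} - \bar X^{\theta_1}_{t_k}] - Z^{\theta_2}(t_k, t_{k-1}) [X^{\epsilon,\theta_1}_{t_{k-1}} - \bar X^{\theta_1}_{t_{k-1}}]$, the crucial step is to establish the uniform $L^2$ estimate
\begin{align*}
\sup_{(\theta_1, \theta_2) \in \bar\Theta^2} E|\mathcal{D}_k^\varepsilon|^2 \leq C\epsilon\Delta
\end{align*}
with $C$ independent of $k, n$. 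Granted this, the triangle inequality together with Lemma~\ref{flemmabar} gives $\sup E|F_k^\varepsilon(\theta_2; \{X^{\epsilon,\theta_1}\})|^2 \leq C\Delta^2$ once $\epsilon \leq C\Delta$, and Cauchy-Schwarz combined with Lemma~\ref{qlemma} yields
\begin{align*}
\sup_{\theta_1, \theta_2} E|A_k^\varepsilon| \leq \|Q^{-1}_k\| \cdot \sqrt{E|\mathcal{D}_k^\varepsilon|^2} \cdot \sqrt{E|F_k^\varepsilon(\theta_2; \{X^{\epsilon,\theta_1}\}) + F_k^\varepsilon(\theta_2; \{\bar X^{\theta_1}\})|^2} \leq \frac{C}{\Delta} \cdot \sqrt{\epsilon\Delta} \cdot C\Delta = C\sqrt{\epsilon\Delta},
\end{align*}
so that $\sum_k \sup E|A_k^\varepsilon| \leq CT\sqrt{\epsilon/\Delta} \to 0$ precisely under the hypothesis $\epsilon = o(\Delta)$. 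Uniformity in probability then follows from Markov's inequality combined with a compactness/continuity argument on $\bar\Theta^2$ in the spirit of Lemma~\ref{modulus}.

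The main obstacle is proving the key estimate $\sup E|\mathcal{D}_k^\varepsilon|^2 \leq C\epsilon\Delta$. The naive bound $E|\mathcal{D}_k^\varepsilon|^2 \leq C\epsilon$ from Theorem~\ref{xlimit} is short by exactly the factor of $\Delta$ one needs. To recover it, I would return to the representation of Theorem~\ref{philimit}: writing $\varphi^\varepsilon_t := (X^{\epsilon,\theta_1}_t - \bar X^{\theta_1}_t)/\sqrt\epsilon$, the decomposition $\varphi^\varepsilon_{t_k} - Z^{\theta_2}(t_k, t_{k-1}) \varphi^\varepsilon_{t_{k-1}} = [\varphi^\varepsilon_{t_k} - Z^{\theta_1}(t_k, t_{k-1}) \varphi^\varepsilon_{t_{k-1}}] + [Z^{\theta_1}(t_k, t_{k-1}) - Z^{\theta_2}(t_k, t_{k-1})] \varphi^\varepsilon_{t_{k-1}}$ splits the problem in two. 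The first bracket, by the semigroup property of $Z^{\theta_1}$, reduces to an integral over $[t_{k-1}, t_k]$ comprising a drift of order $\Delta$, an It\^o integral of $L^2$-order $\sqrt\Delta$, and an increment of the remainder $\tilde{\mathcal R}^{\varepsilon, \theta_1}$ over the same small interval; the second bracket is $O(\Delta)$ pointwise since $\|Z^{\theta_1}(t_k, t_{k-1}) - Z^{\theta_2}(t_k, t_{k-1})\| = O(\Delta)$ and $E|\varphi^\varepsilon|^2$ is bounded by Theorem~\ref{xlimit}. The decisive technical input is to strengthen the in-probability convergence of $\tilde{\mathcal R}^\varepsilon$ from Theorem~\ref{philimit} into an $L^2$-control of its increments over intervals of length $\Delta$; this is precisely the refinement alluded to in the remark following Theorem~\ref{philimit}, and is what I expect to consume most of the effort. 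The $\gamma$ regime would be handled by the same argument applied to the corresponding representation in Theorem~\ref{philimit}.
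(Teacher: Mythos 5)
Your overall architecture is the same as the paper's: the same $A^\varepsilon_k+B^\varepsilon_k$ decomposition, the same treatment of the $B^\varepsilon_k$ sum (which indeed costs only a factor $\sqrt\epsilon$ after summation), the same target rate $E|A^\varepsilon_k|\lesssim\sqrt{\epsilon\Delta}$ so that the sum over $n=T/\Delta$ terms vanishes precisely under $\epsilon=o(\Delta)$, and the correct identification of the difference term $\mathcal D^\varepsilon_k=F^\varepsilon_k(\theta_2;\{X^{\epsilon,\theta_1}\})-F^\varepsilon_k(\theta_2;\{\bar X^{\theta_1}\})$ as the crux. The gap is that you do not prove the key estimate on $\mathcal D^\varepsilon_k$; you reduce it to an $L^2$ control of the increments $\tilde{\mathcal R}^{\varepsilon}_{t_k}-Z^{\theta}(t_k,t_{k-1})\tilde{\mathcal R}^{\varepsilon}_{t_{k-1}}$ of the remainder from Theorem \ref{philimit} and defer that control to future effort. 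This is not a minor deferral: the remark following Theorem \ref{philimit} only addresses $\mathcal L^p$ convergence of $\sup_t|\tilde{\mathcal R}^\varepsilon_t|$ under an extra Lipschitz hypothesis and says nothing about increments over intervals of length $\Delta$, and the Conclusions of the paper explicitly single out the rate of $\sqrt\epsilon\left[\tilde{\mathcal R}^{\varepsilon,\theta}_{t_k}-Z^{\theta}(t_k,t_{k-1})\tilde{\mathcal R}^{\varepsilon,\theta}_{t_{k-1}}\right]$ in $(\epsilon,\delta,\Delta)$ as an unresolved analytic difficulty. So the route you choose makes the lemma hinge on exactly the quantity the authors avoid.

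The paper's proof circumvents the fluctuation representation entirely at this point. It writes $\mathcal D^\varepsilon_k=I+II+III$ directly from the SDE, with $I=\int^{t_k}_{t_{k-1}}\left[\frac\epsilon\delta b+c-\bar\lambda^{\theta_1}(\bar X^{\theta_1}_s)\right]ds$, $II=\sqrt\epsilon\int^{t_k}_{t_{k-1}}\sigma\,dW_s$, and $III=\left(1_m-Z^{\theta_2}(t_k,t_{k-1})\right)\cdot[X^{\epsilon,\theta_1}_{t_{k-1}}-\bar X^{\theta_1}_{t_{k-1}}]$. The singular drift in $I$ is absorbed by the corrector $\chi$ via the It\^o-formula computation of Lemma \ref{moments} (and (\ref{chireference})), giving $E|I|\leq K\epsilon$; the It\^o isometry gives $E|II|\leq K\sqrt{\epsilon\Delta}$; and Theorem \ref{xlimit} together with $\|1_m-Z^{\theta_2}(t_k,t_{k-1})\|=O(\Delta)$ gives $E|III|\leq K\sqrt\epsilon\Delta$. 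The total $K(\epsilon+\sqrt{\epsilon\Delta})$ is $o(\Delta)$ under the hypothesis, which is all that is needed after multiplying by the uniformly bounded factor $Q^{-1}_k\left[F^\varepsilon_k(\theta_2;\{X^{\epsilon,\theta_1}\})+F^\varepsilon_k(\theta_2;\{\bar X^{\theta_1}\})\right]$ (Lemmata \ref{nuflemma}, \ref{flemmabar}, \ref{qlemma}). To complete your argument you should replace the detour through $\tilde{\mathcal R}^\varepsilon$ with this elementary SDE-level decomposition, which uses only estimates already established in the Appendix.
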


\noindent\textit{Proof.} Let us examine the proof of Lemma \ref{contrastlimit}. It will suffice to show that the terms $A^\varepsilon_k$, $B^\varepsilon_k$ vanish \textit{faster than $\Delta$} in probability uniformly in $\theta_1, \theta_2$ as $\epsilon+\Delta\to0$. Recall that Lemmata \ref{flemmabar} and \ref{qlemma} apply uniformly in $n$. Thus, (\ref{brh}) is bounded uniformly in $n$ and, with the new assumption that $\epsilon\cdot n\leq G$, one may replace Lemma \ref{flemma} with Lemma \ref{nuflemma} to establish the same for (\ref{arh}). Meanwhile, it is clear that (\ref{blh}) vanishes faster than $\Delta$. Thus, if we establish that (\ref{alh}) also vanishes faster than $\Delta$, we will have proven the lemma.

Splitting the right-hand side of (\ref{alh}), we obtain
\begin{align*}
F^\varepsilon_k(\theta_2;\{X^{\epsilon,\theta_1}_{t_k}\}_{k=1}^n)-F^\varepsilon_k(\theta_2;\{\bar X^{\theta_1}_{t_k}\}_{k=1}^n)&=I+II+III,
\end{align*}
where
\begin{align*}
I&:=\int^{t_k}_{t_{k-1}}\left[\frac\epsilon\delta b(X^{\epsilon,\theta_1}_s,Y^{\epsilon,\theta_1}_s)+c(X^{\epsilon,\theta_1}_s,Y^{\epsilon,\theta_1}_s)-\bar\lambda^{\theta_1}(\bar X^{\theta_1}_s)\right]ds,\\
II&:=\sqrt\epsilon\int^{t_k}_{t_{k-1}}\sigma(X^{\epsilon,\theta_1}_s,Y^{\epsilon,\theta_1}_s)dW_s,\\
III&:=\left(1_{m}-Z^{\theta_{2}}(t_k,t_{k-1})\right)\cdot[X^{\epsilon,\theta_1}_{t_{k-1}}-\bar X^{\theta_1}_{t_{k-1}}].
\end{align*}
$I$ is handled by an argument similar to the proof of Lemma \ref{moments} - letting $\chi$ be as in (\ref{chifunction}), one applies the It\^o formula to $\chi(X^{\varepsilon}_t,Y^{\varepsilon}_t)$ and rearranges the terms to see that there is a constant $K$, which may be chosen independently of $(n,k)$, such that
\begin{align*}
E|I|&\leq \delta\cdot K\leq\epsilon\cdot K,
\end{align*}
where we have used the fact that $\delta$ is $O(\epsilon)$.
Meanwhile, it is clear that there is likewise a (perhaps larger) constant $K$ such that
\begin{align*}
E|II|&\leq \sqrt{\epsilon\Delta}\cdot K.
\end{align*}
Finally, in light of Theorem \ref{xlimit} and the fact that $\left(1_{m}-Z^{\theta_{2}}(t_k,t_{k-1})\right)$ is bounded by a finite constant times $\Delta$, it is clear that for a (perhaps larger) constant $K$,
\begin{align*}
E|III|&\leq\sqrt\epsilon\Delta\cdot K.
\end{align*}
Hence,
\begin{align*}
E|F^\varepsilon_k(\theta_2;\{X^{\epsilon,\theta_1}_{t_k}\}_{k=1}^n)-F^\varepsilon_k(\theta_2;\{\bar X^{\theta_1}_{t_k}\}_{k=1}^n)|&\leq(\epsilon+\sqrt{\epsilon\Delta})\cdot 2^{3}\cdot K,
\end{align*}
which is enough since we have assumed $\epsilon$ is $o(\Delta)$.

\qed

\end{document}